\DeclareMathOperator{\divsymb}{div}
\DeclareMathOperator{\tr}{tr}
\DeclareMathOperator{\Ric}{Ric}
\newcommand{\bR}{\mathbb{R}}
\def\sideremark#1{\ifvmode\leavevmode\fi\vadjust{\vbox to0pt{\vss
 \hbox to 0pt{\hskip\hsize\hskip1em
 \vbox{\hsize3cm\tiny\raggedright\pretolerance10000
 \noindent #1\hfill}\hss}\vbox to8pt{\vfil}\vss}}}
\newtheorem{theorem}{Theorem}[section]
\newtheorem{proposition}[theorem]{Proposition}
\newtheorem{lemma}[theorem]{Lemma}
\newtheorem{corollary}[theorem]{Corollary}
\theoremstyle{definition}
\newtheorem{definition}[theorem]{Definition}
\newtheorem{example}[theorem]{Example}
\theoremstyle{remark}
\newtheorem{remark}[theorem]{Remark}
\numberwithin{equation}{section}
\newcommand{\suma}{\sum_{\alpha=2}^n}
\newcommand{\sumb}{\sum_{\beta=3}^n}
\begin{document}

\title[5-dim quasi-Einstein manifolds with CSC]{Rigidity of Five-Dimensional quasi-Einstein manifolds with constant scalar curvature}

\author{Zhongxian Cao}
\address{}
\email{}

\begin{abstract}
Let $(M^5,g)$ be a five-dimensional non-trivial simply-connected compact quasi-Einstein manifold with boundary. If $M$ has constant scalar $R$, in \cite{costa2024rigiditycompactquasieinsteinmanifolds} the authors show that $R$ = $((m-5)k+20)/(m-k+4)\lambda$ for some $k\in\{0,2,3,4\}$. Both cases of $k=0$ and $k=4$ are already classified in \cite{HPW-2014} and \cite{costa2024rigiditycompactquasieinsteinmanifolds}. In this paper we will prove that the case $k=3$ is rigid. 
\end{abstract}

\keywords{quasi-Einstein manifolds; constant scalar curvature; compact manifolds with boundary; rigidity results.}

\maketitle

\section{introduction}
    A compact $n$-dimensional Riemannian manifold $(M^n, g)$, $\ n \geq 2$, possibly with boundary $\partial M$, is called a \textbf{$m$-quasi-Einstein manifold}, or simply \textbf{quasi-Einstein manifold}, if there exists a smooth potential function $u$ on $M$ satisfying the system
    \begin{equation}\label{Eq1}
        \begin{cases}
            \nabla^2 u=\frac{u}{m}(R i c-\lambda g) & \text { in } M, \\
            u>0 & \text { on } \operatorname{int}(M), \\
            u=0 & \text { on } \partial M,
        \end{cases}
    \end{equation}
    where $\lambda$ and $0<m<\infty$ are constant (cf.\cites{CSW-2011,HPW-2012,HPW-2014}). Here, $\nabla^2 u$ stands for the Hessian of $u$ and Ric is the Ricci tensor of $g$. 
    
    An $m$-quasi-Einstein manifold will be called \textbf{trivial} if $u$ is constant, otherwise it will be \textbf{nontrivial}. Notice that the triviality implies that $M^n$ is an Einstein manifold, and $\partial M =\emptyset$. But conversely an Einstein manifold can be nontrivial, see Table 2.1 in \cite{HPW-2014}. 

    When $m$ is a finite integer, quasi-Einstein manifolds are naturally arise as the base of warped-product Einstein manifolds (see page 267 in \cite{BookBesse} , and also \cite{HPW-2012} for the case with boundary). In particular when $m=1$, assume in addition that $\Delta u = -\lambda u$ to recover the static equation $-(\Delta u)g + \nabla^2 u- u\Ric = 0 $. 

    Choosing $u = e^{-\frac{\varphi}{m}}$, then \eqref{Eq1} takes the form of 
    \begin{equation}\label{Eq2}
        \Ric + \nabla^2 \varphi - \frac1{m}\text{d}\varphi \otimes\text{d}\varphi = \lambda g, \text { in } M. 
    \end{equation}
    When $\partial M=\emptyset$ and $m=\infty$, an quasi-Einstein manifold is precisely a gradient Ricci soliton. The left hand side of the equation \eqref{Eq2} is Bakry-\'Emery Ricci tensor, so quasi-Einstein manifolds are also associated with the study of diffusion operators \cite{B-E-1985} and smooth metric measure space; see related papers \cite{C-2012-smms, C-2012-smms2, M-R-2014, R-2011, W-2011, W-W-2009}. In physics, quasi-Einstein manifolds relate to the geometry of a degenerate Killing horizon and horizon limit; see, e.g. \cite{Phy1, Phy2, Phy3}.

    For explicit examples of quasi-Einstein manifolds, see \cite{BookBesse, exa1, exa2, CSW-2011, C-2012-smms, C-2012-smms2, exa3, exa4, R-2011}. It is natural to seek classification of quasi-Einstein manifolds. He, Petersen and Wylie in \cite{HPW-2014} give a description on \textbf{rigid} quasi-manifolds, that is, Einstein manifolds or manifolds of which universal covering is a product of Einstein manifolds. They proved that a 3-dimensional quasi-Einstein with $m>1$ is rigid if and only if it has constant scalar curvature, and also showed 4-dimensional counterexamples. So the question arises that whether we can classify quasi-Einstein manifolds with constant scalar curvature. Some additional restriction on curvature tensor will help; see, e.g. \cite{HPW-2014, Tflat, Pinchcond}. 
    
    This paper focuses on nontrivial compact quasi-Einstein manifolds with $m>1$, and by \cite{HPW-2012} one has $\lambda>0$. Here are some basic examples:
    \begin{example}\label{example}
        (i). $\mathbb{S}^n_+$, $g = dr^2+\sin^2 rg_{\mathbb{S}^{n-1}}$, $u = \cos r$, where $n\geq1$;

        (ii). $[-\sqrt{\frac{m}{4\lambda}}\pi,\sqrt{\frac{m}{4\lambda}}\pi]\times \mathbb{S}^{n-1}$, $g = dt^2 + \frac{n-2}{\lambda}\sin^2 t g_{\mathbb{S}^{n-1}}$, $u = \cos (\sqrt{\frac{\lambda}{m}}t)$; 

        (iii). $\mathbb{S}^{p+1}_+\times \mathbb{S}^q,\ q\geq2$, equipped with doubly warped product metric
        $$
        g=d r^2+\sin ^2 r g_{\mathbb{S} p}+\frac{q-1}{p+m} g_{\mathbb{S} q}, 
        $$
        where $d r^2+\sin ^2 r g_{\mathbb{S} p}$ is metric on $\mathbb{S}^{p+1}_+$. 
    \end{example}
    
    In \cite{costa2024rigiditycompactquasieinsteinmanifolds}, Costa, Ribeiro Jr. and Zhou  proved the following important theorem: 
    \begin{theorem}[\cite{costa2024rigiditycompactquasieinsteinmanifolds}]\label{Cscval}
    Let $(M^n, g, u, \lambda)$ be a nontrivial compact $m$-quasi-Einstein manifold with boundary, $m>1$ and constant scalar curvature $R$. Then we have:
    \begin{equation}\label{Csc}
        R=\frac{k(m-n)+n(n-1)}{m+n-k-1} \lambda
    \end{equation}
    for some $k \in\{0,2,3, \ldots, n-1\}$.
    \end{theorem}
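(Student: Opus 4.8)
The plan is to show that the potential $u$ behaves like the cosine of the distance to its maximum set $\Sigma$, to read off the Ricci spectrum along $\Sigma$, and to take $k:=\dim\Sigma$; the constraint $k\ne 1$ will come from $\Sigma$ being Einstein with positive Einstein constant. First I would show that $|\nabla u|^2$ is a quadratic polynomial in $u$. Tracing \eqref{Eq1} gives $\Delta u=\tfrac{u}{m}(R-n\lambda)$, so constancy of $R$ makes $u$ a Dirichlet eigenfunction on $(M,\partial M)$ with eigenvalue $(n\lambda-R)/m$, which is positive because $u>0$ on $\operatorname{int}(M)$. The Kim--Kim identity for $m$-quasi-Einstein metrics (see e.g.\ \cite{HPW-2012}) supplies a constant $\mu$ with $(m-1)|\nabla u|^2+u\,\Delta u+\lambda u^2\equiv\mu$; here $m>1$ is essential. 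Combining these and rescaling $u$ so that $\max_M u=1$ (attained at an interior critical point), I obtain
\begin{equation*}
  |\nabla u|^2=b\,(1-u^2),\qquad b:=\frac{R+(m-n)\lambda}{m(m-1)},
\end{equation*}
with $b>0$ since $|\nabla u|^2$ vanishes only where $u=1$ and $u$ is nonconstant. Hence $\Sigma=\{u=1\}$ is exactly the critical set of $u$, and is nonempty.

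Next I would locate the ``radial'' direction and determine the global shape of $M$. On $M\setminus\Sigma$ put $\nu=\nabla u/|\nabla u|$. From \eqref{Eq1} together with $\nabla^2 u(\nabla u,\cdot)=\tfrac12\,d|\nabla u|^2=-bu\,du$ one finds $\Ric(\nabla u,\cdot)=(\lambda-mb)\,du$, so $\nu$ is a Ricci eigenvector with the constant eigenvalue $\lambda-mb$. Moreover $\rho:=\tfrac1{\sqrt b}\arccos u$ satisfies $\nabla\rho=-\nu$, $|\nabla\rho|\equiv1$ on $M\setminus\Sigma$, and $\rho\to0$ along $\Sigma$; arguing as for the level sets of a distance function (the warped-product mechanism of \cite{HPW-2014}), $\Sigma$ is a smooth closed totally geodesic submanifold, $u=\cos\!\big(\sqrt b\,\operatorname{dist}(\cdot,\Sigma)\big)$, and $M$ is the closed tube of radius $\pi/(2\sqrt b)$ about $\Sigma$, along which $u$ attains a Morse--Bott maximum. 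Set $k:=\dim\Sigma$.

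Then I would compute $R$ from the Ricci spectrum along $\Sigma$. Since $\nabla u\equiv0$ on $\Sigma$, differentiating along $\Sigma$ gives $\nabla^2 u(X,\cdot)=0$ for $X\in T\Sigma$, so by \eqref{Eq1} (at $u=1$) $\Ric(X,Y)=\lambda\,g(X,Y)$ for $X,Y\in T\Sigma$ and $\Ric(X,\cdot)=0$ on the normal space of $\Sigma$. On the other hand, every unit normal $e$ at $p\in\Sigma$ is the initial velocity of the geodesic $\gamma_e(s)=\exp_p(se)$, which enters $M\setminus\Sigma$ with $\nu=-\dot\gamma_e$; continuity of $\Ric$ together with the previous paragraph then forces $\Ric_p(e,e)=\lim_{s\to0^+}\Ric_{\gamma_e(s)}(\nu,\nu)=\lambda-mb$, so the Ricci operator restricted to the normal space of $\Sigma$ equals $(\lambda-mb)\Id$. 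Hence along $\Sigma$ the Ricci operator has eigenvalue $\lambda$ with multiplicity $k$ and $\lambda-mb$ with multiplicity $n-k$; taking the trace, $R=k\lambda+(n-k)(\lambda-mb)=n\lambda-(n-k)mb$. Substituting the value of $b$ and solving for $R$ gives, after routine algebra,
\begin{equation*}
  R=\frac{k(m-n)+n(n-1)}{m+n-k-1}\,\lambda,\qquad 0\le k\le n-1 .
\end{equation*}

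Finally, to rule out $k=1$ I would invoke the local warped-product structure of a neighbourhood of $\Sigma$ forced by \eqref{Eq1}, together with $\Ric|_{T\Sigma}=\lambda\,g$ from the previous step, to conclude that $\Sigma$ with its induced metric is Einstein with Einstein constant $\lambda>0$; since a connected $1$-manifold carries no such metric, $k\ne1$, and therefore $k\in\{0,2,3,\dots,n-1\}$. I expect the hard part to be exactly these geometric inputs: proving that the critical set $\Sigma$ is a genuine smooth submanifold (this rests on the transnormality $|\nabla u|^2=\phi(u)$, i.e.\ on the rigidity of the level sets of $u$), and extracting enough of the warped geometry near $\Sigma$ to see that $\Sigma$ is Einstein with positive Einstein constant — the input that actually excludes $k=1$.
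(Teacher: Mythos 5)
Most of your proposal runs along the same lines as the argument the paper imports from \cite{costa2024rigiditycompactquasieinsteinmanifolds}: with $R$ constant, \eqref{lem1.2} gives $|\nabla u|^2=b(1-u^2)$ after normalizing $\max u=1$, so $u$ is transnormal; by the structure theory of transnormal/isoparametric functions (exactly the Wang--Miyaoka--Ge--Tang results quoted in Remark~\ref{rmk_on_lem}, so outsourcing this is legitimate) the maximum set $\Sigma$ is a smooth closed totally geodesic submanifold and $M$ is a tube over it; then \eqref{Eq1} forces the ambient Ricci eigenvalue $\lambda$ on $T\Sigma$ and $\lambda-mb$ on the normal space (your continuity argument along normal geodesics is fine, and is equivalent to tracing the Hessian on $\Sigma$), and $R=k\lambda+(n-k)(\lambda-mb)$ with $k=\dim\Sigma$ gives \eqref{Csc}; your algebra checks out.

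The genuine gap is the exclusion of $k=1$, which is precisely the content of Theorem~\ref{Cscval} beyond the trace computation. You deduce it from the claim that $\Sigma$ with its induced metric is Einstein with Einstein constant $\lambda>0$, justified by ``$\Ric|_{T\Sigma}=\lambda g$ together with the local warped-product structure near $\Sigma$ forced by \eqref{Eq1}''. Neither half of that justification holds as stated. First, the restriction of the ambient Ricci tensor to $T\Sigma$ is not the intrinsic Ricci of $\Sigma$: since $\Sigma$ is totally geodesic, the Gauss equation gives, for a unit $X\in T\Sigma$ and an orthonormal basis $\{e_a\}$ of the normal space, $\Ric^{\Sigma}(X,X)=\lambda-\sum_a K(X,e_a)$, where $K$ is the ambient sectional curvature; nothing you have established controls these mixed curvatures along $\Sigma$, so intrinsic Einsteinness does not follow. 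Second, \eqref{Eq1} does not force a warped-product structure in a neighbourhood of the focal set $\Sigma$: near $\Sigma$ the metric is a general tube metric, and the model spaces (e.g.\ Example~\ref{example}(iii)) are doubly warped products, not warped products over $\Sigma$; extracting any such splitting is essentially what the rigidity theorems prove, not a pointwise consequence of the equation. As written, your argument therefore only yields $k\in\{0,1,\dots,n-1\}$, and a different (or substantially more detailed) argument is needed to rule out $k=1$.
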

    \begin{remark}\label{rem0}
    Here $k$ is the dimension of the critical set of $u$. In Example~\ref{example}, each of the three manifolds has constant scalar curvature, and we have $k=0$ in (i), $k=n-1$ in (ii), and $k=q$ in (iii). If $k=0$ and $M$ is simply-connected, then $M$ must be isometric to the standard hemisphere $\mathbb{S}^n_+$ (see \cite[Remark 3]{costa2024rigiditycompactquasieinsteinmanifolds}). 
    \end{remark}

    In \cite{HPW-2014}, He, Petersen and Wylie showed that 3-dimensional simply-connected $m$-quasi-Einstein manifold with boundary and constant scalar curvature must be $\mathbb{S}^3$ or $I\times \mathbb{S}^2$. 
    In \cite{costa2024rigiditycompactquasieinsteinmanifolds} the authors proved the following theorem:
    \begin{theorem}[\cite{costa2024rigiditycompactquasieinsteinmanifolds}]\label{constn-1}
        Let $\left(M^n, g, u, \lambda\right), n \geq 3$, be a nontrivial simply connected compact $m$-quasiEinstein manifold with boundary and $m>1$. Then $M^n$ has constant scalar curvature $R= (n-1) \lambda$ if and only if it is isometric, up to scaling, to the cylinder $\left[0, \frac{\sqrt{m}}{\sqrt{\lambda}} \pi\right] \times N$ with product metric, where $N$ is a compact $\lambda$-Einstein manifold.
    \end{theorem}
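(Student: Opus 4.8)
The plan is to establish the two implications separately; the ``if'' direction is a short verification and the ``only if'' direction carries all the content. For the ``if'' direction, on $[0,\pi\sqrt{m/\lambda}]\times N$ with product metric $g=dt^2+g_N$ and $N$ a compact $\lambda$-Einstein $(n-1)$-manifold, take $u=\sin\bigl(\sqrt{\lambda/m}\,t\bigr)$ (or any positive multiple). Then $\nabla^2u=u''\,dt^2=-\tfrac{\lambda}{m}u\,dt^2$ while $\Ric_g=0\oplus\lambda g_N$, so $\tfrac um(\Ric-\lambda g)=-\tfrac{\lambda}{m}u\,dt^2=\nabla^2u$; moreover $u>0$ on the interior and $u=0$ on the two boundary slices, so $(M,g,u,\lambda)$ is a nontrivial $m$-quasi-Einstein manifold, and $R=\tr_g\Ric_g=\tr_{g_N}\Ric_{g_N}=(n-1)\lambda$.

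For the converse, assume $R\equiv(n-1)\lambda$. (A one-line computation with \eqref{Csc} shows this forces $k=n-1$ in \Cref{Cscval}, matching the picture below in which the critical set of $u$ is a hypersurface.) The engine is the standard pointwise identity for quasi-Einstein metrics, obtained by taking the divergence of the Hessian equation in \eqref{Eq1} and using the contracted second Bianchi identity $\divsymb\Ric=\tfrac12\nabla R$ together with $\Delta u=\tfrac um(R-n\lambda)$:
\begin{equation*}
\tfrac{u}{2}\nabla R=-(m-1)\Ric(\nabla u)-\bigl(R-(n-1)\lambda\bigr)\nabla u .
\end{equation*}
Since $\nabla R=0$ and $R=(n-1)\lambda$, this reduces to $(m-1)\Ric(\nabla u)=0$, so $m>1$ gives $\Ric(\nabla u)=0$ on all of $M$. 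Differentiating $\varphi:=|\nabla u|^2+\tfrac{\lambda}{m}u^2$ and substituting \eqref{Eq1} yields $\nabla\varphi=\tfrac{2u}{m}\Ric(\nabla u)=0$, so $\varphi\equiv c$; evaluating on $\partial M$, where $u=0$, shows $c=|\nabla u|^2|_{\partial M}>0$ (using $\lambda>0$). Hence $u\le u_0:=\sqrt{mc/\lambda}$, with equality exactly on the critical set $\Sigma:=u^{-1}(u_0)$, which is nonempty since $u$ attains an interior maximum.

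On the regular region $M_0:=\{0<u<u_0\}$ put $\nu:=\nabla u/|\nabla u|$. From $\nabla^2u(\nabla u)=\tfrac um(\Ric(\nabla u)-\lambda\nabla u)=-\tfrac{\lambda u}{m}\nabla u$ one reads off that the integral curves of $\nu$ are unit-speed geodesics and that $|\nabla u|$ is constant on each level set $\Sigma_c$. Tracing the Hessian equation over $T\Sigma_c$ and using $\Ric(\nu,\cdot)=0$ shows the level sets have vanishing mean curvature; the traced Riccati equation $\nu(H)+|A|^2+\Ric(\nu,\nu)=0$ along $\nu$ then forces the shape operator $A\equiv0$, so the $\Sigma_c$ are totally geodesic and $g|_{M_0}=dt^2+g_N$ with $g_N$ independent of $t$. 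Inserting this into the $T\Sigma_c$-part of \eqref{Eq1} gives $\Ric_{g_N}=\lambda g_N$, and integrating $|u'|^2=c-\tfrac{\lambda}{m}u^2$ gives, on each component of $M_0$ normalized so that $\{t=0\}\subset\partial M$, $u(t)=u_0\sin\bigl(\sqrt{\lambda/m}\,t\bigr)$, which reaches $u_0$ at $t=\tfrac{\pi}{2}\sqrt{m/\lambda}$; thus each component is $\bigl(0,\tfrac{\pi}{2}\sqrt{m/\lambda}\bigr)\times N$.

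The remaining step --- and the one I expect to be the main obstacle --- is the global assembly. Since $u|_\Sigma=u_0>0=u|_{\partial M}$, the critical set $\Sigma$ is an interior hypersurface, and simple-connectedness forces it to be two-sided and separating; hence $M_0$ has exactly two components, each isometric to $\bigl(0,\tfrac{\pi}{2}\sqrt{m/\lambda}\bigr)\times N$, glued along $\Sigma=\{\tfrac{\pi}{2}\sqrt{m/\lambda}\}\times N$, with the two $t=0$ ends forming $\partial M$. One must also check that the product structure extends smoothly across $\Sigma$ --- this follows from $\Ric\to\lambda g_{T\Sigma}\oplus 0$ and the normal Hessian of $u$ along $\Sigma$ equalling the expected $-\tfrac{\lambda u_0}{m}$, so that $u$ is even in the normal coordinate --- while the scaling freedom in the metric accounts for the ``up to scaling'' in the statement. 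This identifies $M$ with $[0,\pi\sqrt{m/\lambda}]\times N$ for $N$ a compact $\lambda$-Einstein manifold, completing the proof.
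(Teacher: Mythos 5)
This statement is imported from \cite{costa2024rigiditycompactquasieinsteinmanifolds}; the present paper quotes it without reproducing a proof, so there is no in-paper argument to compare against. Your proposal is the natural route and its analytic core is correct: the ``if'' verification is fine, and in the converse direction the chain $R=(n-1)\lambda\Rightarrow\Ric(\nabla u)=0$ via \eqref{lem1.1}, the conservation law $|\nabla u|^2+\tfrac{\lambda}{m}u^2\equiv c>0$, the vanishing of the mean curvature of the level sets, and the traced Riccati equation forcing $A\equiv 0$ (hence a local product $dt^2+g_N$ with $\Ric_{g_N}=\lambda g_N$ and $u=u_0\sin(\sqrt{\lambda/m}\,t)$ on each component of the regular set) are all sound.

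Two points in the global assembly are asserted rather than proved. First, that the critical set $\Sigma=u^{-1}(u_0)$ is a smooth closed hypersurface: you lean on Theorem~\ref{Cscval} together with the interpretation of $k$ as $\dim\Sigma$ (Remark~\ref{rem0}), which is legitimate as a citation, but it can be made self-contained: differentiating $|\nabla u|^2=c-\tfrac{\lambda}{m}u^2$ twice and evaluating at a critical point gives $(\nabla^2u)^2=-\tfrac{\lambda u_0}{m}\nabla^2u$, so each eigenvalue of $\nabla^2u$ is $0$ or $-\tfrac{\lambda u_0}{m}$, while $\Delta u=-\tfrac{\lambda u_0}{m}$ forces rank one; combined with the transnormal/isoparametric structure theory already invoked in Remark~\ref{rmk_on_lem} this shows $\Sigma$ is a smooth hypersurface and that $u$ is a function of the distance to $\Sigma$ near $\Sigma$, which is also what actually justifies your ``$u$ is even in the normal coordinate'' and the smooth matching of the two product pieces. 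Second, ``separating, hence $M_0$ has exactly two components'' is too quick: separation alone does not bound the number of components of $M_0$ unless you also know $\Sigma$ is connected. This can be closed with the structure you already have: each component of $M_0$ is a half-open cylinder whose bottom end fills one boundary component and whose top end limits onto one side of a single component of $\Sigma$; both sides of every component of $\Sigma$ must be filled, so the adjacency graph is a path, and connectedness of $M$ plus simple connectedness (ruling out one component wrapping both sides of $\Sigma$, which would produce a loop meeting $\Sigma$ exactly once) force a single component of $\Sigma$ with exactly two adjacent components of $M_0$. With these two steps filled in, your argument is complete and yields the claimed cylinder.
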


    They also proved the following theorem and completed the 4-dimensional classification. This is similar to 4-dimensional shrinking Ricci soliton with constant scalar curvature $R=2\lambda$. 
    \begin{theorem}[\cite{costa2024rigiditycompactquasieinsteinmanifolds}]
        Let $\left(M^4, g, u, \lambda\right)$ be a nontrivial simply connected compact 4-dimensional $m$-quasi-Einstein manifold with boundary and $m>1$. Then $M^4$ has constant scalar curvature $R=2 \frac{(m+2)}{(m+1)} \lambda$ if and only if it is isometric, up to scaling, to the product space $\mathbb{S}_{+}^2 \times \mathbb{S}^2$ with the doubly warped product metric.
    \end{theorem}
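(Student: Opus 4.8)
The plan is to prove the two implications separately; the ``if'' direction is a short computation, while the converse carries essentially all the work. For the ``if'' direction, observe that the doubly warped product metric $dr^2+\sin^2 r\,g_{\mathbb{S}^1}+\tfrac{1}{m+1}\,g_{\mathbb{S}^2}$ on $\mathbb{S}^2_+\times\mathbb{S}^2$ is precisely Example~\ref{example}(iii) with $p=1$ and $q=2$ (potential $u=\cos r$, and $\lambda=m+1$); it is nontrivial, simply connected, and compact with boundary $\mathbb{S}^1\times\mathbb{S}^2$, and by Remark~\ref{rem0} its critical set is the surface $\{0\}\times\mathbb{S}^2$, of dimension $k=q=2$. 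Substituting $n=4$ and $k=2$ into \eqref{Csc} then gives the constant value $R=\tfrac{2(m-4)+12}{m+1}\,\lambda=2\tfrac{m+2}{m+1}\,\lambda$, so it remains to establish the converse.

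So assume $R\equiv 2\tfrac{m+2}{m+1}\lambda$. Comparing with \eqref{Csc} for $n=4$, Theorem~\ref{Cscval} forces $k=2$, hence the interior critical set of $u$ is a closed totally geodesic surface $\Sigma=u^{-1}(\max u)$. Taking the divergence of \eqref{Eq1} and invoking the contracted second Bianchi identity yields, since $R$ is constant, the eigen-relation $\Ric(\nabla u)=\rho\,\nabla u$ with the \emph{constant} $\rho=\tfrac{(n-1)\lambda-R}{m-1}=\tfrac{\lambda}{m+1}$; contracting \eqref{Eq1} with $\nabla u$ then produces the first integral
\[
\tfrac12\,\nabla|\nabla u|^2=\tfrac{u}{m}\bigl(\Ric(\nabla u)-\lambda\,\nabla u\bigr)=-\tfrac{\lambda}{m+1}\,u\,\nabla u,\qquad\text{so that}\qquad|\nabla u|^2=\tfrac{\lambda}{m+1}\bigl((\max u)^2-u^2\bigr).
\]
Consequently $|\nabla u|$ is a positive constant on $\partial M$ (which is totally geodesic, since $\nabla^2 u=0$ there by \eqref{Eq1}); the regular region $M_0:=M\setminus(\partial M\cup\Sigma)$ is foliated by the compact level hypersurfaces $\Sigma_c=u^{-1}(c)$ for $0<c<\max u$; and along the unit field $\nu:=\nabla u/|\nabla u|$ the arclength parameter $t$ ranges over a bounded interval and makes $u$ a trigonometric function of $t$.

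The crux is to upgrade this foliation to a rigid doubly warped product. I would analyze the second fundamental form of the level sets, $\mathrm{II}=\tfrac{1}{|\nabla u|}\,\nabla^2u|_{T\Sigma_c}=\tfrac{u}{m|\nabla u|}\,(\Ric-\lambda g)|_{T\Sigma_c}$, together with the eigen-relation $\Ric(\nu)=\rho\,\nu$, the constancy of $R$, and---crucially---the fact that the $3$-dimensional $\Sigma_c$ collapses onto the $2$-dimensional $\Sigma$ as $c\to\max u$, a collapse dropping exactly one dimension since $k=n-2$. The goal is to show that $(\nabla u)^\perp$ splits orthogonally as $L\oplus Q$, where $L$ is a line field responsible for the $\sin r$-warped $\mathbb{S}^1$ direction that degenerates along $\Sigma$, and $Q$ is a rank-$2$ integrable subbundle tangent to surfaces isometric to a fixed $(\Sigma,g_\Sigma)$ transported rigidly along the flow of $\nu$. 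Granting this, $M$ is, up to rescaling $\lambda$, the doubly warped product $[0,\tfrac{\pi}{2}]\times_{\sin r}\mathbb{S}^1\times_{c_0}\Sigma^2$; restricting \eqref{Eq1} and the condition $R=\mathrm{const}$ to $\Sigma$ shows that $(\Sigma,g_\Sigma)$ is $\lambda$-Einstein, hence of constant positive curvature, and since $M$, and therefore $\Sigma$, is simply connected, $\Sigma$ is a round $\mathbb{S}^2$ and $c_0=\tfrac{1}{m+1}$; this is exactly the metric of Example~\ref{example}(iii).

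The main obstacle is precisely this middle step. The first integral and the eigen-relation for $\Ric(\nabla u)$ are formal consequences of $R$ being constant, but showing that $(\nabla u)^\perp$ genuinely decomposes into an $\mathbb{S}^1$-factor plus a $\Sigma$-factor---that is, ruling out a merely twisted product and correctly identifying the collapsing direction---is where the dimension restriction $n=4$ (equivalently $k=n-2$) must really be exploited. An alternative route would be an equality-case analysis in a (weighted) Reilly-type integral identity applied to the first Dirichlet eigenfunction $u$ (note that $\Delta u=-\tfrac{2\lambda}{m+1}u$), saturation of which would pin $M$ down to the model; in either approach, essentially all of the difficulty of the theorem is concentrated here.
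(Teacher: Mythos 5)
Your ``if'' direction is fine: the metric is Example~\ref{example}(iii) with $p=1$, $q=2$, the critical set has dimension $k=q=2$, and substituting $n=4$, $k=2$ into \eqref{Csc} indeed gives $R=2\tfrac{m+2}{m+1}\lambda$. But the converse, which is the whole content of the theorem (note the present paper only quotes it from \cite{costa2024rigiditycompactquasieinsteinmanifolds}; its proof is not reproduced here), is not actually proved in your proposal. Everything after the first integral is a statement of intent: the asserted orthogonal splitting $(\nabla u)^\perp=L\oplus Q$ with $Q$ transported rigidly along the flow of $\nu$ is \emph{equivalent} to what has to be shown, namely that $P=\Ric-\rho g$ has eigenvalues $0,0,m\rho,m\rho$ with parallel eigendistributions, i.e.\ $\nabla\Ric=0$, and you offer no mechanism for it (``I would analyze\dots'', ``The goal is to show\dots'', ``Granting this\dots''). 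The ingredients you do establish --- $k=2$ from Theorem~\ref{Cscval}, $\Ric(\nabla u)=\rho\nabla u$ from \eqref{lem1.1}, the first integral $|\nabla u|^2=B-Ku^2$ from \eqref{lem1.2}, minimality of the critical set and total geodesy of $\partial M$ --- are the standard soft consequences of constant $R$ and hold before any rigidity is visible; they do not by themselves exclude a non-product (``twisted'') geometry, and the delicacy here is exactly why He--Petersen--Wylie's four-dimensional examples exist and why the classification required new estimates.

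Where the work actually happens (in \cite{costa2024rigiditycompactquasieinsteinmanifolds}, and in the $n=5$ analogue carried out in Sections~\ref{sec3}--\ref{sec6} of this paper) is in proving pointwise that the smallest eigenvalue of $P$ on $(\nabla u)^\perp$ vanishes identically: one derives an elliptic inequality for $L_{m+2}$ acting on eigenvalue sums (as in \eqref{Lmu1}--\eqref{Lmu2}), controls the curvature of the level sets --- in dimension $4$ the three-dimensional level sets have vanishing Weyl tensor, which is the special fact replacing the Gauss--Bonnet--Chern argument of Section~\ref{sec5} --- and combines this with an identity of the type \eqref{np2} to conclude $\nabla\Ric=0$ on an open set; real-analyticity in harmonic coordinates then propagates $\nabla\Ric=0$ to all of $M$, harmonic curvature plus Corollary~1.14 of \cite{HPW-2014} gives rigidity, and Proposition~\ref{Rigid} together with simple-connectedness and $\dim N=k=2$ identifies $M$ with $\mathbb{S}^2_+\times\mathbb{S}^2$ up to scaling. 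Your alternative suggestion (equality case of a weighted Reilly identity for $u$ with $\Delta u=-\tfrac{2\lambda}{m+1}u$) is likewise only named, not executed, and it is not known to close the argument. As it stands, the proposal proves the easy implication and the routine first-order identities, while the heart of the theorem --- the step you yourself flag as ``the main obstacle'' --- is missing.
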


    As for $n=5$, $k\in\{0,2,3,4\}$, and both cases of $k=0$ and $k=4$ have been discussed in Remark~\ref{rem0} and Theorem~\ref{constn-1}. In this paper, we prove the following main theorem, which is inspired by \cite{li2025rigidityfivedimensionalshrinkinggradient}. 
    \begin{theorem}\label{THM}
        Let $(M^5, g, u, \lambda)$ be a nontrivial simply-connected compact 5-dimensional $m$-quasi-Einstein manifold with boundary and $m>1$. Then $M^5$ has constant scalar curvature $R= \frac{3m+5}{m+1} \lambda$ if and only if it is isometric, up to scaling, to the product space $\mathbb{S}_{+}^2 \times \mathbb{S}^3$ with the doubly warped product metric.
    \end{theorem}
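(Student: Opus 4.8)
The plan is to run a warped-product rigidity argument: use the constant-scalar-curvature hypothesis to extract pointwise identities for $u$ and $\Ric$, put $g$ into a normal form along the gradient flow of $u$, and identify the two factors from the structure equations and the topology. First I would record the algebraic consequences of $R$ being constant. Contracting the second Bianchi identity against $\nabla^2u=\tfrac um(\Ric-\lambda g)$ and commuting derivatives gives $\Ric(\nabla u)=\tfrac{(n-1)\lambda-R}{m-1}\nabla u$, which for $n=5$ and $R=\tfrac{3m+5}{m+1}\lambda$ becomes $\Ric(\nabla u)=\tfrac{\lambda}{m+1}\nabla u$; differentiating $|\nabla u|^2$ and using this gives $|\nabla u|^2=\tfrac{\lambda}{m+1}(u_{\max}^2-u^2)$, and tracing \eqref{Eq1} gives $\Delta u=-\tfrac{2\lambda}{m+1}u$. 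After rescaling so that $\lambda=m+1$ and $u_{\max}=1$, these read $|\nabla u|^2=1-u^2$, $\Ric(\nabla u)=\nabla u$, $\Delta u=-2u$, $R=3m+5$; and the Bochner formula for $u$, combined with these, forces $|\nabla^2u|^2=2u^2$, so $|\Ric|^2=3(m+1)^2+2$ is constant.

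Next I would analyze the two ends of $u$. On $\partial M$, equation \eqref{Eq1} gives $\nabla^2u=0$ and $|\nabla u|=1$, so $\partial M$ is a closed totally geodesic hypersurface; in the interior the critical set of $u$ is exactly $\Sigma:=\{u=1\}$, which by \cref{Cscval} in the case $k=3$ is a closed $3$-dimensional submanifold, and the Morse--Bott expansion of $u$ normal to $\Sigma$ together with $|\nabla u|^2=1-u^2$ forces $\nabla^2u=-\Id$ on $N\Sigma$ along $\Sigma$, so that there $\Ric$ has eigenvalue $m+1$ on $T\Sigma$ and $1$ on $N\Sigma$. The unit field $\nu=\nabla u/|\nabla u|$ is geodesic on $M\setminus(\partial M\cup\Sigma)$, and its flow identifies this set with $\partial M\times(0,\tfrac\pi2)$ so that $g=ds^2+g_s$, $u=\sin s$, $|\nabla u|=\cos s$, and $\partial_s g_s=2\,\mathrm{II}_s$ with $\mathrm{II}_s=\tfrac{\tan s}{m}\bigl(\Ric|_{T\Sigma_s}-(m+1)g_s\bigr)$.

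The crux, following \cite{li2025rigidityfivedimensionalshrinkinggradient}, is to show that the eigenvalue pattern of $\Ric$ found along $\Sigma$ — namely $m+1$ with multiplicity three and $1$ with multiplicity two — holds on all of $M$; equivalently, that $\mathrm{II}_s$ has rank one for every $s$, i.e.\ that $\nabla^2u$ has only the eigenvalues $0$ and $-u$. Having $R$ and $|\Ric|^2$ constant does not by itself pin down $\Ric$ away from $\Sigma$, so a genuinely second-order input is needed to exclude the non-rigid configurations. For this I would seek a weighted Bochner/divergence identity with weight $u^m$: since $u^m$, and (as $m>1$) also $\nabla(u^m)$, vanish on $\partial M$, integrating such an identity over the compact manifold $M$ produces no boundary term, and the expected outcome is $\int_M|T|^2\,u^m\,\dvol=0$ for a tensor $T$ — built from $\nabla^3u$, the Cotton tensor, or the defect of $\Ric$ from this eigenvalue model — whose vanishing is exactly rigidity; hence $T\equiv0$. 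Granting this, $\mathrm{II}_s$ vanishes on a parallel, integrable $3$-dimensional distribution $W$ with totally geodesic leaves and equals $-\tan s$ on the complementary line field $V$, and integrating $\partial_s g_s=2\,\mathrm{II}_s$ yields $g=ds^2+\cos^2s\,h_0+k_0$ with fixed metrics $h_0,k_0$ on the leaves of $V,W$ and no cross terms.

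Finally, smoothness of $g$ at $\Sigma$, where the $V$-direction collapses, forces $(V,h_0)$ to be a round circle and $ds^2+\cos^2s\,h_0$ to be the round hemisphere $\mathbb{S}^2_+$; hence $(M,g)$ is the Riemannian product $\mathbb{S}^2_+\times(W^3,k_0)$, and the $W$-block of \eqref{Eq1} reads $\Ric_{k_0}=(m+1)k_0$, so $(W^3,k_0)$ is a $3$-dimensional Einstein manifold of positive Einstein constant, hence a spherical space form; since $M$, and therefore $W$, is simply connected, $W=\mathbb{S}^3$. A direct check that $u=\cos r$ with $r=\tfrac\pi2-s$ solves \eqref{Eq1} on $\mathbb{S}^2_+\times\mathbb{S}^3$ with the doubly warped metric of \cref{example}(iii) completes the proof. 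I expect the third paragraph to be the real difficulty: producing the extra second-order (integral) identity that kills the non-rigid eigenvalue configurations — everything before and after it is essentially forced by it.
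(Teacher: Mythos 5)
Your preliminary identities (the eigenvalue $\rho=\tfrac{\lambda}{m+1}$ of $\Ric$ along $\nabla u$, $|\nabla u|^2=B-Ku^2$, constancy of $|\Ric|^2$, the behaviour at $\partial M$ and at the critical set, and the final warped-product/splitting analysis) are consistent with the paper, but the proposal has a genuine gap exactly where you yourself locate it: the third paragraph. You never construct the weighted divergence identity $\int_M|T|^2u^m\,\dvol=0$ that is supposed to force the eigenvalue pattern $\mu_1=\mu_2=0$, $\mu_3=\mu_4=\mu_5=m\rho$ on all of $M$; you only assert that such an identity is "expected" and then proceed "granting this." Since $R$ and $|\Ric|^2$ constant do not pin down the spectrum of $\Ric$, this unproved step is the entire content of the theorem, and there is no evidence that a single Bochner-type identity with weight $u^m$ can do the job: it would have to separate the $k=3$ model from the other admissible eigenvalue configurations using only pointwise curvature algebra, with no topological input.

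The paper's proof of precisely this step is a multi-stage argument that is not of the form you sketch. It establishes a differential inequality for $L_{m+2}(\mu_1+\mu_2)$ and an estimate for $|\nabla P|^2$ in which the Weyl curvature of the level sets $\Sigma_s$ enters; it then uses that simple-connectedness and $k=3$ force $N\cong\mathbb{S}^3$ and $\Sigma_s\cong\mathbb{S}^1\times\mathbb{S}^3$, so $\chi(\Sigma_s)=0$ and the four-dimensional Gauss--Bonnet--Chern formula converts the estimate into decay of $\int_{\Sigma_s}|W^{\Sigma_s}|^2$; the limit statement $W^{\partial M}=0$ together with $\Ric^{\partial M}\ge 0$ and the classification of locally conformally flat manifolds with nonnegative Ricci curvature gives $\mu_1+\mu_2=0$ on $\partial M$; a further modified drift operator $\Delta_h$ (with extra $|\nabla\varphi|^{-2}$ and $|\nabla\varphi|^{-4}$ terms, not just the weight $u^{m+2}$) and a level-set monotonicity argument upgrade this to $\mu_1+\mu_2=0$ outside a compact set; and only then do real-analyticity of $(g,u)$ and the He--Petersen--Wylie rigidity classification yield $\nabla\Ric\equiv0$ and the splitting. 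So the global topology of the level sets and an external classification theorem are essential inputs that your proposed identity would have to replace; without producing it (or an equivalent mechanism), the argument does not go through, even though your endgame — integrating $\partial_s g_s=2\,\mathrm{II}_s$ and identifying $\mathbb{S}^2_+\times\mathbb{S}^3$ — is a reasonable alternative to the paper's appeal to the rigidity table once rigidity is known.
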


    In \cite{li2025rigidityfivedimensionalshrinkinggradient}, for Ricci soliton, the authors estimate $\Delta_f (\lambda_1+\lambda_2)$ and Weyl tensor of the level sets to give a series of inequalities. For quasi-Einstein manifolds, the elliptic operator $L_{m+2} = \Delta_{-(m+2)\log u}$ is important, and there are useful identities on the operator $L_{m+2}$, just as $\Delta _f$ does in Ricci solitons. In our paper we seek for estimate on $L_{m+2}(\lambda_1+\lambda_2)$. 
    
    The Weyl tensor of 3-dimensional level set vanishes if manifold $M$ has dimension 4, which appears hard to handle in higher dimensions. Fortunately, in dimension 5 the Gauss-Bonnet-Chern formula can be used to estimate the Weyl tensor of 4-dimensional level set. Actually under our assumption with $k=3$, the Weyl tensor of level sets decays uniformly, which implies that $\partial M$ is locally conformally flat. This also restricts $L_{m+2}(\lambda_1+\lambda_2)$. 

    The rest of this paper is organized as follows. In Section~\ref{sec2}, we collect several useful results that will be used in the proof of Theorem~\ref{THM}. In Section~\ref{sec3}, we give estimates on $L_{m+2}(\mu_1+\mu_2)$. In Section~\ref{sec4}, we calculate $|\nabla \Ric|^2$. In Section~\ref{sec5}, we restrict to $n=5$ and study the uniform decay on $\mu_1+\mu_2$ by use of the Gauss-Bonnet-Chern formula on level sets. In Section~\ref{sec6}, the proof of the main theorem is presented. 

\section{preliminaries}\label{sec2}
    We recall some important properties of $m$-quasi-Einstein manifolds (cf. \cite{CSW-2011,DGR-2022,HPW-2012}, see also \cite{costa2024rigiditycompactquasieinsteinmanifolds}). 
    \begin{lemma}
        Let $(M,g,u, \lambda)$ is a $m$-quasi-Einstein manifold. Then we have:
        \begin{equation}\label{lem1.1}
            \frac{1}{2} u \nabla R=-(m-1) R i c(\nabla u)-(R-(n-1) \lambda) \nabla u;
        \end{equation}
        
        \begin{equation}\label{lem1.2}
            \frac{u^2}{m}(R-n\lambda)+(m-1)|\nabla u|^2+\lambda u^2=\mu=\text{constant};
        \end{equation}
        
        \begin{equation}\label{lem1.3}
            \begin{aligned}
                \frac{1}{2} \Delta R= & -\frac{m+2}{2 u}\langle\nabla u, \nabla R\rangle-\frac{m-1}{m}\left|R i c-\frac{R}{n} g\right|^2 \\
                & -\frac{(n+m-1)}{m n}(R-n \lambda)\left(R-\frac{n(n-1)}{n+m-1} \lambda\right);
            \end{aligned}
        \end{equation}
        
        \begin{equation}\label{lem1.4}
            \begin{aligned}
                u\left(\nabla_i R_{j k}-\nabla_j R_{i k}\right)= & m R_{i j k l} \nabla_l u+\lambda\left(\nabla_i u g_{j k}-\nabla_j u g_{i k}\right)\\
                &-\left(\nabla_i u R_{j k}-\nabla_j u R_{i k}\right) .
            \end{aligned}
        \end{equation}
    \end{lemma}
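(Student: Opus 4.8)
The plan is to derive all four identities directly from the defining system \eqref{Eq1}, using only standard Riemannian identities: the Ricci commutation formula for third covariant derivatives, the Bochner identity $\Delta\nabla_j u = \nabla_j\Delta u + R_{jl}\nabla^l u$, and the contracted second Bianchi identity $\nabla^j R_{ij} = \tfrac12\nabla_i R$. Throughout one uses that tracing $\nabla^2 u = \tfrac{u}{m}(\Ric-\lambda g)$ gives $\Delta u = \tfrac{u}{m}(R-n\lambda)$, and that $M$ is connected, so a function with vanishing gradient is constant.

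First I would prove \eqref{lem1.4}. Taking a further covariant derivative of $\nabla_i\nabla_j u = \tfrac{u}{m}(R_{ij}-\lambda g_{ij})$ and antisymmetrizing appropriately, the left-hand side becomes, by the Ricci identity, the single curvature term $m R_{ijkl}\nabla^l u$ (after clearing the factor $1/m$), while the right-hand side keeps the Ricci-gradient and metric-gradient contributions; this is exactly \eqref{lem1.4}. Then \eqref{lem1.1} follows by contracting \eqref{lem1.4} with $g^{jk}$: the left side reduces to $\tfrac12 u\nabla_i R$ via $\nabla^j R_{ij}=\tfrac12\nabla_i R$, and the right side assembles into $-(m-1)\Ric(\nabla u)-(R-(n-1)\lambda)\nabla u$. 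Equivalently, one may take the divergence of the Hessian equation directly and invoke the Bochner commutation formula together with $\nabla^j R_{ij}=\tfrac12\nabla_i R$.

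For \eqref{lem1.2}, set $F := \tfrac{u^2}{m}(R-n\lambda) + (m-1)|\nabla u|^2 + \lambda u^2$ and compute $\nabla_j F$. Using the Hessian equation to write $\nabla^i u\,\nabla_i\nabla_j u = \tfrac{u}{m}(R_{ij}\nabla^i u - \lambda\nabla_j u)$ and substituting \eqref{lem1.1} for the term $\tfrac{u^2}{m}\nabla_j R$, the $R_{jl}\nabla^l u$ contributions cancel and the remaining coefficient of $\nabla_j u$ collapses to zero; hence $F$ is constant, which is \eqref{lem1.2}. For \eqref{lem1.3} I would take $\nabla^j$ of \eqref{lem1.1} written as $\tfrac12 u\nabla_j R = -(m-1)R_{jl}\nabla^l u - (R-(n-1)\lambda)\nabla_j u$. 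The left side gives $\tfrac12\langle\nabla u,\nabla R\rangle + \tfrac12 u\Delta R$; on the right one uses $\nabla^j R_{jl}=\tfrac12\nabla_l R$, substitutes $\nabla^j\nabla^l u = \tfrac{u}{m}(R^{jl}-\lambda g^{jl})$ so that $R_{jl}\nabla^j\nabla^l u = \tfrac{u}{m}(|\Ric|^2-\lambda R)$, and replaces $\Delta u$ by $\tfrac{u}{m}(R-n\lambda)$. Collecting the $\langle\nabla u,\nabla R\rangle$ terms yields the coefficient $-\tfrac{m+2}{2}$; dividing by $u$ gives $\tfrac12\Delta R = -\tfrac{m+2}{2u}\langle\nabla u,\nabla R\rangle - \tfrac{m-1}{m}(|\Ric|^2-\lambda R) - \tfrac1m(R-(n-1)\lambda)(R-n\lambda)$, and rewriting $|\Ric|^2 = |\Ric-\tfrac{R}{n}g|^2 + \tfrac{R^2}{n}$ and regrouping the polynomial in $R$ produces exactly the stated form.

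The computations are elementary; the only points requiring care are the bookkeeping of curvature sign conventions — fixing the Riemann tensor so that $g^{jk}R_{ijkl}=-R_{il}$, and the sign in the Ricci identity, so that \eqref{lem1.4} comes out as written — and the final algebraic regrouping in \eqref{lem1.3}, which is purely polynomial. Since these identities are recorded, in slightly different notation, in \cite{CSW-2011,HPW-2012,DGR-2022,costa2024rigiditycompactquasieinsteinmanifolds}, I would present the proof mainly for completeness and to fix the conventions used in the rest of the paper.
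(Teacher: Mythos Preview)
The paper does not prove this lemma; it simply cites \cite{CSW-2011,DGR-2022,HPW-2012,costa2024rigiditycompactquasieinsteinmanifolds} for these standard identities. Your derivation is correct and follows exactly the standard route taken in those references, so there is nothing to compare.
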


    \begin{remark}\label{rmk_on_lem}
        If $M$ has constant scalar curvature $R$ and $m>1$, then \eqref{lem1.2} implies that
        \begin{equation}
            \begin{aligned}
                |\nabla u|^2 = & \frac{\mu}{m-1} - \frac{R-(n-m)\lambda}{m(m-1)}u^2\\
                = & B-K u^2,
            \end{aligned}
        \end{equation}
        where $B,K$ are positive constants when $M$ is nontrivial. Together with trace of equation \eqref{Eq1}, that is, $\Delta u = \frac{R-n\lambda}{m}u$, one sees that $u$ is an \textbf{isoparametric} function, so the critical set $N\subset M$ is a minimal submanifold, and every level set ${\Sigma_s}$ (except maximum) is a tubular neighborhood of N (see \cite{Wang1987,Miy2013,Ge2013,Ge2014}for more details). The proof of Theorem~\ref{Cscval} from \cite{costa2024rigiditycompactquasieinsteinmanifolds} also shows that $\dim N = k$; here $k$ appears in \eqref{Csc}.  
    \end{remark}
    
    Now we assume $R$ is constant and $m>1$, then \eqref{lem1.1} implies that
    \begin{equation}\label{Ricnu=}
        \Ric(\nabla u) = \frac{(n-1)\lambda-R}{m-1}\nabla u.
    \end{equation}
    Assume $\rho = \frac{(n-1)\lambda-R}{m-1}$ then $\Ric$, viewed as an $(1,1)$-tensor, has an eigenvalue $\rho$ if $\nabla u \not= 0$. Set 
    \begin{equation}\label{P=}
        P = \Ric - \rho g,    
    \end{equation}
    then from \eqref{Csc},
    \begin{equation}\label{rho}
    \begin{aligned}
        \rho = &\frac{(n-1)\lambda-R}{m-1}\\
        = &\frac{n-k-1}{m+n-k-1}\lambda, 
    \end{aligned}
    \end{equation} 
    and 
    \begin{equation}\label{trP}
        \tr (P) = R - n\rho = k\frac{m\lambda}{m+n-k-1}. 
    \end{equation} 
    By \eqref{lem1.3} and \eqref{Csc}, we have  
    \begin{equation}\label{P^2}
        |P|^2 = k\big(\frac{m\lambda}{m+n-k-1}\big)^2. 
    \end{equation}

    Let eigenvalues of $\Ric$ be $\lambda_1,\ldots,\lambda_n$, then the eigenvalues of $P$ are $\mu_i = \lambda_i - \rho$. Away from critical set of $u$, we know from \eqref{Ricnu=} and \eqref{P=} that $P$ has eigenvalue 0. When $k=n-2$, we may assume $\mu_1=0$, and $\mu_2\leq\ldots\leq\mu_n$, then we have
    \begin{equation}\label{trace}
        \suma \mu_{\alpha} = (n-2)m\rho,
    \end{equation}
    \begin{equation}\label{square}
        \suma \mu_{\alpha}^2 = (n-2)m^2\rho^2,
    \end{equation}
    so
    $$
    \begin{aligned}
        \mu_2 = & (n-2)m\rho - \sumb \mu_\beta\\
        \geq & (n-2)m\rho - \sqrt{(n-2)\sumb \mu_\beta ^2}\\
        = & 0. 
    \end{aligned}
    $$
    Hence we have $0 = \mu_1 \leq \mu_2 \leq \ldots \leq \mu_n$ when $k=n-2$. 

    \begin{definition}
        A $(\lambda, m)$-quasi-Einstein manifold $(M, g, u)$ is called \textbf{rigid} if it is Einstein or its universal cover is a product of Einstein manifolds.
    \end{definition}
    The authors in \cite{HPW-2014} established the description for rigid quasi-Einstein manifold. 
    \begin{proposition}[\!\!\cite{HPW-2014}]\label{Rigid}
        A non-trivial complete rigid $(\lambda, m)$-quasi-Einstein manifold $(M, g, u)$ is one of the examples in Table 2.1 in \cite{HPW-2014}, or its universal cover $\widetilde{M}$ splits as
        $$
        \begin{aligned}
        \widetilde{M} & =\left(M_1, g_1\right) \times\left(M_2, g_2\right) \\
        u(x_1,x_2) & =u(x_2) = u_2(x_2),
        \end{aligned}
        $$
        where $c$ is a constant, $(M_1, g_1, c)$ is a trivial $(\lambda, m)$-quasi-Einstein manifold and $(M_2, g_2, u_2)$ is one of the examples in Table 2.1 in \cite{HPW-2014}.
    \end{proposition}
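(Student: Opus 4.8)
The plan is to reduce the statement to the classification of non-constant functions whose Hessian is pointwise proportional to the metric on a complete Einstein manifold, together with an elementary analysis of how the potential behaves on a Riemannian product. First I would dispose of the case in which $M$ is itself Einstein, say $\Ric=\kappa g$: then \eqref{Eq1} reads $\nabla^2 u=\tfrac{\kappa-\lambda}{m}\,u\,g$, and non-triviality forces $\kappa\neq\lambda$, so $u$ is a non-constant function with $\nabla^2 u$ everywhere a multiple of $g$ on a complete Einstein manifold. The classification of such functions (Tashiro, Obata, Kanai; see also the discussion surrounding Table 2.1 in \cite{HPW-2014}) shows that $(M,g,u)$ is exactly one of the examples in that table, which settles this case.

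When $M$ is not Einstein, I would pass to the universal cover $\pi\colon\widetilde M\to M$, which by rigidity splits isometrically as $\widetilde M=\prod_{i=1}^\ell(M_i,g_i)$ with $\ell\geq 2$, each factor a complete, simply connected Einstein manifold with $\Ric_{g_i}=\kappa_i g_i$ (simple connectivity because $\widetilde M$ is simply connected and $\pi_1$ is multiplicative under products). Pulling $u$ back to $\widetilde u:=u\circ\pi$, which is $\pi_1$-invariant and solves the same system \eqref{Eq1} with the same $\lambda,m$, it suffices to recover the product structure of $\widetilde u$. In product coordinates all mixed Christoffel symbols vanish and $g_{\widetilde M},\Ric_{\widetilde M}$ are block-diagonal along the splitting, so for coordinate directions lying in two different factors the corresponding component of \eqref{Eq1} says exactly that the mixed second partial of $\widetilde u$ vanishes; integrating on the connected product, $\widetilde u=\sum_{i=1}^\ell u_i(x_i)$.

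Restricting \eqref{Eq1} to the $i$-th diagonal block gives $\nabla^2_{g_i}u_i=\tfrac{\kappa_i-\lambda}{m}\,\widetilde u\,g_i$, and since the left-hand side depends only on $x_i$, for each $i$ either $\kappa_i=\lambda$ — so $\nabla^2_{g_i}u_i=0$, whence $\nabla u_i$ is parallel, $\Ric_{g_i}(\nabla u_i,\nabla u_i)=0$, and therefore $\nabla u_i\equiv 0$ because $\lambda>0$ — or $\kappa_i\neq\lambda$, in which case $\sum_{j\neq i}u_j$ must be constant. If two distinct indices were of the second type, comparing the two expressions for $\widetilde u$ would force $\widetilde u$ constant, contradicting non-triviality; and at least one index is of the second type, since otherwise all the $u_i$, hence $\widetilde u$, would be constant. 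So exactly one factor, relabel it $(M_2,g_2)$, has $\kappa_2\neq\lambda$, and all other factors are $\lambda$-Einstein with constant $u_i$. Absorbing those constants into $u_2$ gives $\widetilde u=u_2(x_2)$ with $\nabla^2_{g_2}u_2=\tfrac{u_2}{m}(\Ric_{g_2}-\lambda g_2)$, so $(M_2,g_2,u_2)$ is a non-trivial Einstein $(\lambda,m)$-quasi-Einstein manifold and hence an example from Table 2.1 by the first step; putting $(M_1,g_1):=\prod_{i\neq 2}(M_i,g_i)$, which is $\lambda$-Einstein and on which $u$ is constant, we obtain $\widetilde M=(M_1,g_1)\times(M_2,g_2)$ with $u(x_1,x_2)=u_2(x_2)$, as claimed.

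The only genuinely nontrivial ingredient — and the step I expect to be the main obstacle — is the Einstein-case classification of concircular scalar fields invoked in the first paragraph; the product analysis is routine once the rigidity/de Rham factorization is available, the one point needing care being the use of $\lambda>0$ to exclude non-constant affine potentials on the $\lambda$-Einstein factors (for general $\lambda$ those flat factors are instead already subsumed into Table 2.1).
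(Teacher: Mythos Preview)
The paper does not supply its own proof of this proposition: it is quoted verbatim from \cite{HPW-2014} as background (see the sentence ``The authors in \cite{HPW-2014} established the description for rigid quasi-Einstein manifold'' immediately preceding the statement), and is used only as a black box in the final paragraph of Section~\ref{sec6}. So there is no in-paper argument to compare yours against.

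That said, your sketch is sound and follows the expected route. The reduction of the Einstein case to Tashiro's classification of concircular scalar fields is exactly the point of Table~2.1 in \cite{HPW-2014}. In the non-Einstein case your separation-of-variables argument is correct: the vanishing of the mixed Hessian blocks on a Riemannian product gives $\widetilde u=\sum_i u_i(x_i)$ on the simply connected cover, and your dichotomy on whether $\kappa_i=\lambda$ is the right way to isolate the single non-trivial factor. The step ``$\nabla u_i$ parallel $\Rightarrow \Ric_{g_i}(\nabla u_i,\nabla u_i)=0$'' is fine (curvature annihilates parallel fields), and it actually gives $\nabla u_i=0$ whenever $\lambda\neq 0$, not just $\lambda>0$. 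Your closing caveat is accurate: for $\lambda=0$ one can have genuine affine potentials on flat factors, and one then has to argue that such a factor (or its product with the remaining $\lambda$-Einstein pieces) is itself one of the Table~2.1 models rather than leaving several ``non-trivial'' factors; this is where the general-$\lambda$ statement needs a little extra bookkeeping, but it is irrelevant for the present paper, which works throughout under $\lambda>0$.
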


     The Table 2.1 in \cite{HPW-2014} lists all possible non-trivial quasi-Einstein manifolds, up to scaling of $g$ and $u$. In particular if $M$ is compact, then $(M,g,u) \cong (\mathbb{S}^n_+,g,\cos(r))$, where $g = dr^2 + g_{\mathbb{S}^{n-1}}, \ n\geq1$. 

\section{An estimate of $L_{m+2}(\mu_1+\mu_2)$}\label{sec3}
    In the rest of this paper, we assume $k=k_0=n-2$, $R=\frac{(n-2)m+n}{m+1}\lambda$. Let $L_{m+2} = \Delta + \frac{(m+2)}{u}<\nabla u ,\nabla \cdot>$. By \eqref{rho}, \eqref{trP} and \eqref{P^2} we have
    \begin{equation}\label{lambda}
        \lambda = (m+1)\rho, 
    \end{equation}
    \begin{equation}\label{trace2}
        \tr P = k_0m\rho, 
    \end{equation}
    \begin{equation}\label{square2}
        |P|^2 = k_0m^2\rho^2. 
    \end{equation}

    From \eqref{lem1.4}, we can get the following lemma (see \cite[Lemma 4]{costa2024rigiditycompactquasieinsteinmanifolds}):
    \begin{lemma}[\cite{costa2024rigiditycompactquasieinsteinmanifolds}]\label{lem3.1}
        Let $\left(M^n, g\right)$ be an $n$-dimensional Riemannian manifold satisfying \eqref{Eq1}. Then we have:
        $$
        \begin{aligned}
        u\left(\Delta R_{i k}\right)= & \nabla_i R_{s k} \nabla_s u+m \nabla_k R_{i s} \nabla_s u+\frac{u}{2} \nabla_i \nabla_k R+\frac{1}{2} \nabla_i u \nabla_k R \\
        & +\frac{(m+1)}{m} u R_{i s} R_{s k}+2 u R_{j i k s} R_{j s}-(m+2) \nabla_s R_{i k} \nabla_s u \\
        & -\frac{u}{m}(R-(m+n-2) \lambda) R_{i k}+\frac{\lambda u}{m}(R-(n-1) \lambda) g_{i k}. 
        \end{aligned}
    $$
    \end{lemma}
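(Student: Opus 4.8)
The plan is to obtain the identity purely by differentiating the ``curl'' relation \eqref{lem1.4} once more and then eliminating every second derivative of $u$ via the defining Hessian equation \eqref{Eq1}. Concretely, write \eqref{lem1.4} as $u(\nabla_i R_{jk}-\nabla_j R_{ik}) = mR_{ijkl}\nabla_l u+\lambda(\nabla_i u\,g_{jk}-\nabla_j u\,g_{ik})-(\nabla_i u\,R_{jk}-\nabla_j u\,R_{ik})$, apply $\nabla_j$ and contract over $j$, and then solve the resulting equation for $u\,\Delta R_{ik}$. No hypothesis beyond \eqref{Eq1} is needed (in particular constancy of $R$ is not used), so the whole argument is a tensor computation.

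First I would expand the left-hand side by the Leibniz rule: $\nabla_j\!\big[u(\nabla_i R_{jk}-\nabla_j R_{ik})\big] = \nabla_j u\,\nabla_i R_{jk}-\nabla_j u\,\nabla_j R_{ik}+u\,\nabla_j\nabla_i R_{jk}-u\,\Delta R_{ik}$. Isolating $u\,\Delta R_{ik}$ already delivers the terms $\nabla_i R_{sk}\nabla_s u$ and one copy of $-\nabla_s R_{ik}\nabla_s u$, and leaves the term $u\,\nabla_j\nabla_i R_{jk}$ to be dealt with. For that, I would commute the two covariant derivatives, $\nabla_j\nabla_i R_{jk}=\nabla_i\nabla_j R_{jk}+[\nabla_j,\nabla_i]R_{jk}$, apply the contracted second Bianchi identity $\nabla_j R_{jk}=\tfrac12\nabla_k R$ to turn $u\,\nabla_i\nabla_j R_{jk}$ into $\tfrac{u}{2}\nabla_i\nabla_k R$, and evaluate the curvature commutator $[\nabla_j,\nabla_i]R_{jk}$ on the $(0,2)$-tensor $\Ric$; contracting the Riemann tensor against Ricci there produces a $uR_{is}R_{sk}$ term and a Riemann-against-Ricci term of the form $uR_{jiks}R_{js}$.

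Next I would differentiate the right-hand side term by term. For $\nabla_j(mR_{ijkl}\nabla_l u)$ I would split via the contracted second Bianchi identity $\nabla_j R_{ijkl}=\nabla_l R_{ik}-\nabla_k R_{il}$, which yields $m\nabla_k R_{is}\nabla_s u$ together with $-m\,\nabla_s R_{ik}\nabla_s u$, plus the remainder $mR_{ijkl}\nabla_j\nabla_l u$, into which I substitute $\nabla_j\nabla_l u=\tfrac{u}{m}(R_{jl}-\lambda g_{jl})$ from \eqref{Eq1}; this contributes another Riemann-against-Ricci term and a $\lambda u R_{ik}$-type term. Differentiating the $\lambda(\nabla_i u\,g_{jk}-\nabla_j u\,g_{ik})$ and $-(\nabla_i u\,R_{jk}-\nabla_j u\,R_{ik})$ pieces produces Hessians of $u$, again replaced by $\tfrac{u}{m}(\Ric-\lambda g)$ and, after tracing, $\Delta u=\tfrac{R-n\lambda}{m}u$; it also produces $\tfrac12\nabla_i u\,\nabla_k R$ (from $\nabla_j R_{jk}=\tfrac12\nabla_k R$) and a second copy of $-\nabla_s R_{ik}\nabla_s u$, plus further Ricci- and metric-proportional terms.

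Finally I would collect. The two Riemann-against-Ricci contributions combine, using the symmetries of the curvature tensor, into $2uR_{jiks}R_{js}$; the $uR_{is}R_{sk}$ from the commutator and the $\tfrac1m uR_{is}R_{sk}$ from the $\nabla^2u\!\cdot\!\Ric$ substitution add to $\tfrac{m+1}{m}uR_{is}R_{sk}$; the single copy of $-\nabla_s R_{ik}\nabla_s u$ from the left side, the one from the $\nabla u\otimes\Ric$ term, and the $-m\,\nabla_s R_{ik}\nabla_s u$ from the Bianchi split of the Riemann term sum to $-(m+2)\nabla_s R_{ik}\nabla_s u$; all the $R_{ik}$-proportional terms (from the commutator, from the $\lambda$-part of $mR_{ijkl}\nabla_j\nabla_l u$, from $\nabla^2u\!\cdot\!\Ric$, and from $\Delta u\,\Ric$) collapse to $-\tfrac{u}{m}(R-(m+n-2)\lambda)R_{ik}$, and the pure $g_{ik}$-terms collapse to $\tfrac{\lambda u}{m}(R-(n-1)\lambda)g_{ik}$. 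The only real difficulty is bookkeeping: keeping the sign conventions for the curvature commutator and for the two contracted second Bianchi identities consistent with those implicit in \eqref{lem1.4}, and checking that the many Ricci- and metric-proportional terms telescope to exactly the stated coefficients; conceptually there is nothing beyond ``differentiate \eqref{lem1.4} and substitute \eqref{Eq1}.''
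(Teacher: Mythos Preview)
Your approach is correct and is exactly what the paper indicates: it does not reprove the lemma but simply writes ``From \eqref{lem1.4}, we can get the following lemma (see \cite[Lemma 4]{costa2024rigiditycompactquasieinsteinmanifolds})'', i.e.\ differentiate \eqref{lem1.4}, use the contracted second Bianchi identities and the curvature commutator, and substitute the Hessian equation \eqref{Eq1}. Your outline of the bookkeeping (how the three copies of $-\nabla_s R_{ik}\nabla_s u$ assemble into the $-(m+2)$ coefficient, how the two Riemann--Ricci contractions combine into $2uR_{jiks}R_{js}$, etc.) is accurate.
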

    
    Let $L_{m+2} = \Delta + \frac{(m+2)}{u}<\nabla u ,\nabla \cdot>$. It follows from Lemma~\ref{lem3.1} that:
    \begin{corollary}\label{LP}
    Let $n\geq4,\ (M^n,g,u, \lambda)$ be a nontrivial compact n-dimensional $m$-quasi-Einstein manifold with boundary and constant scalar curvature $R = \frac{(n-2)m+n}{m+1}\lambda$, $m>1$, then
        \begin{equation}\label{eqLP}
            L_{m+2} P_{ik} = 2(m+1)\rho P_{ik} + 2uR_{jiks}P_{js}. 
        \end{equation}
    \end{corollary}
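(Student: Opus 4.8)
The plan is to specialize Lemma~\ref{lem3.1} to the constant scalar curvature setting and then pass to $P$. Since $R$ is constant, every occurrence of $\nabla R$ and $\nabla^2 R$ in that lemma drops out; adding $(m+2)\nabla_s u\,\nabla_s R_{ik}$ to both sides replaces $u\Delta R_{ik}$ by $uL_{m+2}R_{ik}$ and cancels the term $-(m+2)\nabla_s R_{ik}\nabla_s u$. What remains is
\begin{multline*}
uL_{m+2}R_{ik} = \nabla_i R_{sk}\nabla_s u + m\,\nabla_k R_{is}\nabla_s u + \tfrac{m+1}{m}\,u\,R_{is}R_{sk} + 2u R_{jiks}R_{js} \\
- \tfrac{u}{m}\bigl(R-(m+n-2)\lambda\bigr)R_{ik} + \tfrac{\lambda u}{m}\bigl(R-(n-1)\lambda\bigr)g_{ik},
\end{multline*}
so the whole problem comes down to the two first-order terms $\nabla_i R_{sk}\nabla_s u$ and $\nabla_k R_{is}\nabla_s u$.

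For those I would differentiate the eigenvalue identity \eqref{Ricnu=}. Writing it as $R_{sk}\nabla_s u=\rho\nabla_k u$, applying $\nabla_i$, and substituting the Hessian equation $\nabla_i\nabla_s u=\tfrac{u}{m}(R_{is}-\lambda g_{is})$ from \eqref{Eq1}, one gets $\nabla_i R_{sk}\nabla_s u=\tfrac{u}{m}\bigl[(\rho+\lambda)R_{ik}-\rho\lambda g_{ik}-R_{is}R_{sk}\bigr]$. Running the same computation for $R_{is}\nabla_s u=\rho\nabla_i u$ with $\nabla_k$ produces the identical tensor for $\nabla_k R_{is}\nabla_s u$ (this coincidence also follows by contracting \eqref{lem1.4} with $\nabla u$ and observing that the result is symmetric in $i,k$). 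Summing, $\nabla_i R_{sk}\nabla_s u+m\nabla_k R_{is}\nabla_s u=\tfrac{(m+1)u}{m}\bigl[(\rho+\lambda)R_{ik}-\rho\lambda g_{ik}-R_{is}R_{sk}\bigr]$, and when this is substituted back the $R_{is}R_{sk}$ contributions cancel exactly against the $\tfrac{m+1}{m}uR_{is}R_{sk}$ term, leaving $uL_{m+2}R_{ik}$ as a linear combination of $uR_{ik}$, $ug_{ik}$ and $uR_{jiks}R_{js}$ with explicit coefficients in $m,n,\rho,\lambda,R$.

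To finish, divide by $u$ (legitimate on $\mathrm{int}(M)$, where $u>0$, and then extend to $\partial M$ by continuity) and pass from $\Ric$ to $P=\Ric-\rho g$: since $g$ is parallel and $\rho$ is constant, $L_{m+2}P_{ik}=L_{m+2}R_{ik}$, while the substitutions $R_{ik}=P_{ik}+\rho g_{ik}$, $R_{js}=P_{js}+\rho g_{js}$ together with the trace $R_{jiks}g_{js}=-R_{ik}$ rewrite the right-hand side in terms of $P_{ik}$, $g_{ik}$ and the curvature-coupling term $R_{jiks}P_{js}$ of \eqref{eqLP}. Evaluating the coefficients with $R=\frac{(n-2)m+n}{m+1}\lambda$ and the normalizations \eqref{lambda}, \eqref{trace2}, \eqref{square2} (in particular $\lambda=(m+1)\rho$), one checks that the $g_{ik}$-coefficient vanishes and the $P_{ik}$-coefficient collapses to $2(m+1)\rho$, which is \eqref{eqLP}. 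The only step that needs real care is the middle paragraph — showing both first-order terms equal the same tensor, so that the $R_{is}R_{sk}$ pieces cancel, and keeping the curvature sign conventions straight — everything else is routine bookkeeping.
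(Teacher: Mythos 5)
Your proposal is correct and follows essentially the same route as the paper: both start from Lemma~\ref{lem3.1} with $\nabla R=0$, absorb the $-(m+2)\nabla_sR_{ik}\nabla_su$ term into $L_{m+2}$, and handle the two remaining first-order terms by differentiating the eigenvector relation \eqref{Ricnu=} (equivalently $P_{sk}\nabla_su=0$) together with the Hessian equation, the only cosmetic difference being that you pass from $\Ric$ to $P$ at the end whereas the paper substitutes $P$ from the outset. Note that carrying out your bookkeeping gives $L_{m+2}P_{ik}=2(m+1)\rho P_{ik}+2R_{jiks}P_{js}$ with no factor of $u$ on the curvature term, which is how \eqref{eqLP} is actually used in Section~\ref{sec4}, so the extra $u$ in the displayed statement is a typo rather than a defect of your argument.
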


    \begin{proof}
        Since $\Ric = P + \rho g$, $P(\nabla u) = 0$, and \eqref{Eq1}, we have
        \begin{equation*}
            \begin{aligned}
                \nabla_i R_{sk} \nabla_s u = & \nabla_i P_{sk} \nabla_s u\\
                = & \nabla_i(P_{sk} \nabla_s u) - P_{sk}\nabla_i\nabla_s u\\
                = & -\frac{u}{m}P_{sk}(R_{is}-\lambda g_{is})\\
                = & -\frac{u}{m} P_{sk}(P_{is}-(\lambda-\rho)g_{is})\\
                = & -\frac{u}{m} P_{is}P_{sk} + u\rho P_{ik}, 
            \end{aligned}
        \end{equation*}
        \begin{equation*}
            \begin{aligned}
                &\frac{(m+1)}{m} u R_{i s} R_{s k}  =  \frac{(m+1)u}{m}(P_{is}P_{sk}+2\rho P_{ik} + \rho^2 g_{ik}),
            \end{aligned}
        \end{equation*}
        \begin{equation*}
            \begin{aligned}
                2 u R_{j i k s} R_{j s}  = & 2uR_{jiks}P_{js} - 2u\rho R_{ik}\\
                = & 2uR_{jiks}P_{js} - 2u\rho P_{ik} - 2u\rho^2g_{ik}. 
            \end{aligned}
        \end{equation*}
        
        By $R=\frac{(n-2)m+n}{m+1}\lambda$ and \eqref{lambda}, 
        \begin{equation*}
            \begin{aligned}
                -\frac{u}{m}(R-(m+n-2) \lambda) R_{i k}  =  &u\frac{(m+2)(m-1)}{m}\rho R_{ik}\\
                = &u\frac{(m+2)(m-1)}{m}\rho(P_{ik}+\rho g_{ik}),
            \end{aligned}
        \end{equation*}
        \begin{equation*}
            \begin{aligned}
                &\frac{\lambda u}{m}(R-(n-1) \lambda) g_{i k}  =  -u\frac{(m+1)(m-1)}{m}\rho^2 g_{ik}. 
            \end{aligned}
        \end{equation*}
        Then Lemma~\ref{lem3.1} implies that         
        \begin{equation*}
            \begin{aligned}
                L_{m+2} P_{ik} = &\Delta P_{ik}+ \frac{(m+2)}{u}<\nabla u ,\nabla P_{ik}>\\
                = & -\frac{m+1}{m}P_{is}P_{sk} + (m+1)\rho P_{ik}\\
                & + \frac{m+1}{m}(P_{is}P_{sk}+2\rho P_{ik} + \rho^2 g_{ik})\\
                & + 2R_{jiks}P_{js} - 2\rho P_{ik} - 2\rho^2g_{ik}\\
                & + \frac{(m+2)(m-1)}{m}\rho(P_{ik}+\rho g_{ik})-\frac{(m+1)(m-1)}{m}\rho^2 g_{ik}\\
                = &  2(m+1)\rho P_{ik} + 2uR_{jiks}P_{js}.
            \end{aligned}
        \end{equation*}
    \end{proof}

    At any point $p\in M$, we can choose an orthonormal basis $\{e_1,\ldots,e_n\}$ such that $P_{ij} = \mu_i\delta_{ij}$ at $p$. In a neighborhood $U_p$ of $p$, we can take parallel transport along geodesics from $p$ to get an orthonormal frame. It is easy to check $\big(P(e_1,e_1)+P(e_2,e_2)\big)(x)\geqslant (\mu_1+\mu_2)(x),\ x\in U_p$, with equality holds when $x=p$. From Corollary~\ref{LP} we see that the following holds in the sense of barrier, hence, in distribution (see \cite[Appendix]{wu2025fourdimensionalshrinkersnonnegativericci}): 
    \begin{equation}\label{Lmu1}
        L_{m+2}(\mu_1+\mu_2)\leqslant 2(m+1)\rho(\mu_1+\mu_2) - 2\suma K_{1\alpha}\mu_\alpha - 2\sumb K_{2\beta}\mu_\beta, 
    \end{equation}
    where $K_{ij} = R_{ijij}$ is the sectional curvature of $M$. 

    Now we need to express the sectional curvature. Recall that $\varphi = -m\log u$, and $\nabla\varphi = -mu/\log u$. Now we choose $e_1 = \frac{\nabla\varphi}{|\nabla\varphi|} = -\frac{\nabla u}{|\nabla u|}$ and generate a basis such that $P_{ij} = \mu_i\delta_{ij}$. 

    From \eqref{lem1.4} we get that
    \begin{equation}
        R_{ijkl}\varphi_j\varphi_l = (\nabla_{\nabla \varphi}P_{ik}-P_{ij}(P_{jk}-m\rho g_{jk})) + |\nabla\varphi|^2(\rho-\frac{\mu_\alpha}{m})g_{ik}. 
    \end{equation}
    This formula can be found in the proof of lemma 7 in \cite{costa2024rigiditycompactquasieinsteinmanifolds}(Page.31), and actually holds for all $n\geq4$. 

    Note that $\varphi_1=|\nabla\varphi|$, and $\varphi_\alpha=0$ for $\alpha\geq2$, therefore take $(i,k) = (\alpha,\alpha)$ and $(i,k) = (\alpha,\beta)$ in the formula and get
    
    \begin{lemma}
    Let $n\geq4,\ (M^n,g,u,\lambda)$ be a nontrivial compact n-dimensional $m$-quasi-Einstein manifold with boundary and constant scalar curvature $R = \frac{(n-2)m+n}{m+1}\lambda$, $m>1$, then
        \begin{equation}\label{eqK1a}
            K_{1\alpha} = \frac1{|\nabla\varphi|^2}(\nabla_{\nabla\varphi}P_{\alpha\alpha} - \mu_\alpha(\mu_\alpha-m\rho)) + (\rho - \frac{\mu_\alpha}{m}), 
        \end{equation}
        \begin{equation}\label{eqR1a1b}
            R_{1\alpha1\beta} = \frac1{|\nabla\varphi|^2}\nabla_{\nabla\varphi}P_{\alpha\beta}, \ \ \alpha\not=\beta.  
        \end{equation}
    \end{lemma}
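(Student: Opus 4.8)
The plan is to read off the two identities by specializing the Bochner-type formula displayed just above — the one cited from the proof of \cite[Lemma 7]{costa2024rigiditycompactquasieinsteinmanifolds} —
\[
    R_{ijkl}\varphi_j\varphi_l = \bigl(\nabla_{\nabla\varphi}P_{ik} - P_{ij}(P_{jk} - m\rho\,g_{jk})\bigr) + |\nabla\varphi|^2\Bigl(\rho - \tfrac{\mu_\alpha}{m}\Bigr)g_{ik},
\]
to the index pairs $(i,k) = (\alpha,\alpha)$ and $(i,k) = (\alpha,\beta)$ with $\alpha \ne \beta$, in the adapted orthonormal frame already fixed: $e_1 = \nabla\varphi/|\nabla\varphi| = -\nabla u/|\nabla u|$, and $e_2,\dots,e_n$ diagonalizing $P$ with $P_{ij} = \mu_i\delta_{ij}$ and $\mu_1 = 0$. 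Then $\varphi_1 = |\nabla\varphi|$ and $\varphi_a = 0$ for $a \ge 2$, so on the left-hand side only the $j = l = 1$ summand survives: $R_{ijkl}\varphi_j\varphi_l = R_{i1k1}|\nabla\varphi|^2$. For $(i,k) = (\alpha,\alpha)$ this is $R_{\alpha1\alpha1}|\nabla\varphi|^2 = K_{1\alpha}|\nabla\varphi|^2$, and for $(i,k) = (\alpha,\beta)$ it is $R_{\alpha1\beta1}|\nabla\varphi|^2 = R_{1\alpha1\beta}|\nabla\varphi|^2$.

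On the right-hand side, $\nabla_{\nabla\varphi}P_{ik}$ is kept as is (recall $\rho$ is constant, so this also equals $\nabla_{\nabla\varphi}R_{ik}$). Since $P$ is diagonal in this frame, $P_{ij}P_{jk} = \mu_i^2\delta_{ik}$ and $P_{ij}g_{jk} = \mu_i\delta_{ik}$, so $P_{ij}(P_{jk} - m\rho\,g_{jk})$ equals $\mu_\alpha(\mu_\alpha - m\rho)$ for $(i,k) = (\alpha,\alpha)$ and $0$ for $(i,k) = (\alpha,\beta)$, $\alpha\ne\beta$; similarly the last term (which in general is $\rho\,g_{ik} - \tfrac1m P_{ik}$) contributes $|\nabla\varphi|^2(\rho - \mu_\alpha/m)$ in the diagonal case and $0$ in the off-diagonal case. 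Dividing through by $|\nabla\varphi|^2$, which is positive on the complement of the critical set of $u$ where the frame lives, gives exactly \eqref{eqK1a} and \eqref{eqR1a1b}. So the argument is a pure substitution; there is no essential obstacle.

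For completeness I would also indicate how the displayed identity follows from \eqref{lem1.4}: contract \eqref{lem1.4} with $\nabla_j u$, use $\Ric(\nabla u) = \rho\nabla u$ from \eqref{Ricnu=} to evaluate $R_{jk}\nabla_j u$ and the $\nabla_i u R_{jk}$ terms, rewrite $\nabla_j u\,\nabla_i R_{jk} = \nabla_i(R_{jk}\nabla_j u) - R_{jk}\nabla_i\nabla_j u = \rho\,\nabla_i\nabla_k u - R_{jk}\nabla_i\nabla_j u$, insert the Hessian equation $\nabla_i\nabla_j u = \tfrac{u}{m}(R_{ij} - \lambda g_{ij})$ from \eqref{Eq1}, and finally replace $u$-derivatives by $\varphi$-derivatives via $\nabla u = -\tfrac{u}{m}\nabla\varphi$, write $\Ric = P + \rho g$, and simplify using $\lambda = (m+1)\rho$ (equation \eqref{lambda}); a term proportional to $\varphi_i\varphi_k$ appears but is harmless since it vanishes once we set $i$ or $k$ equal to some $\alpha \ge 2$. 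The only step demanding care is this last algebraic collapse, where the various multiples of $\rho$, $\lambda$, $P$ and $g$ must be checked to combine into $-P_{ij}(P_{jk} - m\rho g_{jk})$ plus the stated scalar; it is entirely routine.
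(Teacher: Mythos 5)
Your proposal is correct and follows the paper's own argument: the paper proves the lemma exactly by substituting $(i,k)=(\alpha,\alpha)$ and $(i,k)=(\alpha,\beta)$ into the displayed identity $R_{ijkl}\varphi_j\varphi_l = \nabla_{\nabla\varphi}P_{ik}-P_{ij}(P_{jk}-m\rho g_{jk}) + |\nabla\varphi|^2\bigl(\rho g_{ik}-\tfrac1m P_{ik}\bigr)$ in the adapted frame with $\varphi_1=|\nabla\varphi|$, $\varphi_\alpha=0$, and you correctly read the last term as $\rho g_{ik}-\tfrac1m P_{ik}$ despite the loose index $\alpha$ in the paper's display. Your additional sketch of deriving that identity from \eqref{lem1.4} (contracting with $\nabla_j u$, using \eqref{Ricnu=} and the Hessian equation) is sound but not required, since the paper cites it from the proof of Lemma 7 in \cite{costa2024rigiditycompactquasieinsteinmanifolds}.
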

    
    \begin{corollary}\label{K1amua}
    Let $n\geq4,\ (M^n,g,u,\lambda)$ be a nontrivial compact n-dimensional $m$-quasi-Einstein manifold with boundary and constant scalar curvature $R = \frac{(n-2)m+n}{m+1}\lambda$, $m>1$, then
        \begin{equation}\label{eqK1amua}
        \begin{aligned}
            -\suma K_{1\alpha}\mu_\alpha = \frac1{|\nabla\varphi|^2}\suma\mu_\alpha(\mu_\alpha-m\rho)^2. 
        \end{aligned}
        \end{equation}
    \end{corollary}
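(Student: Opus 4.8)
The plan is to substitute the formula \eqref{eqK1a} for $K_{1\alpha}$ directly into $-\suma K_{1\alpha}\mu_\alpha$ and simplify, the only inputs being the constancy of $\tr P$ and of $|P|^2$. Multiplying \eqref{eqK1a} by $-\mu_\alpha$ and summing over $\alpha=2,\dots,n$ produces three groups of terms:
\begin{equation*}
    -\suma K_{1\alpha}\mu_\alpha = -\frac{1}{|\nabla\varphi|^2}\suma \mu_\alpha\,\nabla_{\nabla\varphi}P_{\alpha\alpha} + \frac{1}{|\nabla\varphi|^2}\suma \mu_\alpha^2(\mu_\alpha - m\rho) - \suma \mu_\alpha\Big(\rho - \frac{\mu_\alpha}{m}\Big).
\end{equation*}

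First I would dispose of the derivative term. At the point $p$ the parallel frame is chosen so that $P_{ij}=\mu_i\delta_{ij}$, hence $\suma \mu_\alpha\,\nabla_{\nabla\varphi}P_{\alpha\alpha} = \sum_{i=1}^n \mu_i (\nabla_{\nabla\varphi}P)_{ii} = \langle P,\nabla_{\nabla\varphi}P\rangle = \tfrac12\,\nabla_{\nabla\varphi}|P|^2$ (the $i=1$ term is absent since $\mu_1=0$). Because $|P|^2$ is a constant function on $M$ by \eqref{square2} (equivalently \eqref{P^2}), this derivative vanishes. Next, the last group is purely algebraic: using $\suma\mu_\alpha = \tr P = (n-2)m\rho$ from \eqref{trace2} and $\suma\mu_\alpha^2 = |P|^2 = (n-2)m^2\rho^2$ from \eqref{square2} (both valid since $\mu_1=0$),
\begin{equation*}
    -\suma\mu_\alpha\Big(\rho-\frac{\mu_\alpha}{m}\Big) = -\rho(n-2)m\rho + \frac{1}{m}(n-2)m^2\rho^2 = 0,
\end{equation*}
so we are reduced to $-\suma K_{1\alpha}\mu_\alpha = \frac{1}{|\nabla\varphi|^2}\suma \mu_\alpha^2(\mu_\alpha - m\rho)$. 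Finally, matching this against the claimed right-hand side, I would observe
\begin{equation*}
    \suma \mu_\alpha(\mu_\alpha-m\rho)^2 - \suma\mu_\alpha^2(\mu_\alpha-m\rho) = -m\rho\suma\mu_\alpha(\mu_\alpha-m\rho) = -m\rho\Big(\suma\mu_\alpha^2 - m\rho\suma\mu_\alpha\Big),
\end{equation*}
which vanishes once more by \eqref{trace2} and \eqref{square2}. This yields the corollary.

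The computation is routine and there is no real obstacle; the one step deserving care is the first, namely identifying $\suma\mu_\alpha\,\nabla_{\nabla\varphi}P_{\alpha\alpha}$ with the contraction $\langle P,\nabla_{\nabla\varphi}P\rangle$ evaluated at $p$ — valid precisely because the chosen frame diagonalizes $P$ at $p$ — and then invoking that $|P|^2$ is globally constant. Everything else is bookkeeping with the two identities $\tr P = (n-2)m\rho$ and $|P|^2 = (n-2)m^2\rho^2$.
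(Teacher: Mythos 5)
Your proposal is correct and follows essentially the same route as the paper: substitute \eqref{eqK1a}, kill the derivative term via constancy of $|P|^2$, kill the zeroth-order term via $\tr P=(n-2)m\rho$ and $|P|^2=(n-2)m^2\rho^2$, and convert $\suma\mu_\alpha^2(\mu_\alpha-m\rho)$ into $\suma\mu_\alpha(\mu_\alpha-m\rho)^2$ using $\suma\mu_\alpha(\mu_\alpha-m\rho)=0$. Your explicit justification that $\suma\mu_\alpha\nabla_{\nabla\varphi}P_{\alpha\alpha}=\tfrac12\nabla_{\nabla\varphi}|P|^2$ at $p$ (via the diagonalizing parallel frame) is a careful rendering of the same step the paper states more tersely.
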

    \begin{proof}
        From \eqref{trace2} and \eqref{square2}, it is easy to get that $$\suma \mu_\alpha(\mu_\alpha-m\rho) =0,$$and $$\suma \mu_\alpha^2(\mu_\alpha-m\rho) = \suma \mu_\alpha(\mu_\alpha-m\rho)^2.$$

        Since $|P|^2$ is constant, $\nabla|P|^2 = \suma \mu_\alpha\nabla(P_{\alpha\alpha}) = 0$. So we get         
        \begin{equation*}
        \begin{aligned}
            &-\suma K_{1\alpha}\mu_\alpha\\
            = & -\suma \mu_\alpha\Big( \frac1{|\nabla\varphi|^2}(\nabla_{\nabla\varphi}P_{\alpha\alpha} - \mu_\alpha(\mu_\alpha-m\rho)) + (\rho - \frac{\mu_\alpha}{m}) \Big)\\ 
            = & -\suma  \frac{\mu_\alpha\nabla_{\nabla\varphi}P_{\alpha\alpha}}{|\nabla\varphi|^2} + \frac1{|\nabla\varphi|^2}\suma\mu_\alpha^2(\mu_\alpha-m\rho)-\frac1{m}\suma \mu_\alpha(m\rho-\mu_\alpha)\\
            = & \frac1{|\nabla\varphi|^2}\suma\mu_\alpha^2(\mu_\alpha-m\rho)\\
            = & \frac1{|\nabla\varphi|^2}\suma\mu_\alpha(\mu_\alpha-m\rho)^2. 
        \end{aligned}
        \end{equation*}
    \end{proof}

    On the level set $\Sigma_s$ of $\varphi$, i.e. $\Sigma_s = \varphi^{-1}(s)$, for any regular value $s$ of $\varphi$, using \eqref{Eq2}, \eqref{lambda} and \eqref{trace2} we have:
    \begin{equation}\label{hab}
    \begin{aligned}
        h_{\alpha\beta} &= \frac{-\varphi_{\alpha\beta}}{|\nabla\varphi|}\\
        &= \frac1{|\nabla\varphi|}(R_{\alpha\beta}-\lambda g_{\alpha\beta}) \\
        &= \frac1{|\nabla\varphi|}(\mu_\alpha - m\rho)g_{\alpha\beta},\ 2\leq\alpha,\beta\leq n;
    \end{aligned}
    \end{equation}
    \begin{equation}
    \begin{aligned}
        H = &\suma \frac1{|\nabla\varphi|}(\mu_\alpha - m\rho)\\
        = &\frac1{|\nabla\varphi|}(k_0m\rho - (n-1)m\rho) \\
        = &\frac{-m\rho}{|\nabla\varphi|},
    \end{aligned}
    \end{equation}
    \begin{equation}
        |A|^2 = \sum_{\alpha,\beta}(h_{\alpha\beta})^2 = \frac{m^2\rho^2}{|\nabla\varphi|^2} = H^2.
    \end{equation}

    From Gauss equation and \eqref{hab}, 
    \begin{equation}\label{eqrm}
    \begin{aligned}
        R^{{\Sigma_s}}_{\alpha\beta\alpha\beta} =& R_{\alpha\beta\alpha\beta} + h_{\alpha\alpha}h_{\beta\beta} - (h_{\alpha\beta})^2\\
        = &R_{\alpha\beta\alpha\beta} + \frac1{|\nabla\varphi|^2}(\mu_\alpha-m\rho)(\mu_\beta-m\rho).
    \end{aligned}
    \end{equation}
    Using \eqref{eqK1a}, we have
    \begin{equation}\label{eqric}
    \begin{aligned}
        R^{{\Sigma_s}}_{\alpha\alpha} = &R_{\alpha\alpha} - K_{1\alpha} + Hh_{\alpha\alpha} - (h_{\alpha\alpha})^2 \\
        = &(1+\frac1{m})\mu_\alpha - \frac{\nabla_{\nabla\varphi}P_{\alpha\alpha}}{|\nabla\varphi|^2},
    \end{aligned}
    \end{equation}
    \begin{equation}\label{eqsc}
    \begin{aligned}
        R^{\Sigma_s} = &R - 2R_{11} + H^2 - |A|^2 \\
        = &(n-2)(m+1)\rho. 
    \end{aligned}
    \end{equation}

    Noting
    \begin{equation}\label{eqweyl}
        R^{{\Sigma_s}}_{\alpha\beta\alpha\beta} = W_{\alpha\beta}^{\Sigma_s} + \frac1{n-3}(R^{{\Sigma_s}}_{\alpha\alpha} + R^{{\Sigma_s}}_{\beta\beta}) - \frac1{(n-2)(n-3)}R^{\Sigma_s},
    \end{equation}
    where $W_{\alpha\beta}^{\Sigma_s} = W_{\alpha\beta\alpha\beta}^{\Sigma_s}$ is the Weyl tensor of level set, from \eqref{eqric}, \eqref{eqrm} and  \eqref{eqweyl} we have:
    \begin{lemma}
    Let $n\geq4,\ (M^n,g,u,\lambda)$ be a nontrivial compact n-dimensional $m$-quasi-Einstein manifold with boundary and constant scalar curvature $R = \frac{(n-2)m+n}{m+1}\lambda$, $m>1$, then for $2\leq \alpha,\beta\leq n$, $(\alpha\not= \beta)$
        \begin{equation}\label{Kab}
        \begin{aligned}
            K_{\alpha\beta} = &W_{\alpha\beta}^{\Sigma_s} + \frac1{n-3}(1+\frac1{m})(\mu_\alpha + \mu_\beta - m\rho) \\
            & - \frac1{|\nabla\varphi|^2}\Big(\frac1{n-3}\nabla_{\nabla \varphi}(P_{\alpha\alpha}+P_{\beta\beta})+(\mu_\alpha-m\rho)(\mu_\beta-m\rho)\Big). 
        \end{aligned}
        \end{equation}
    \end{lemma}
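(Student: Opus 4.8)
The plan is to assemble \eqref{Kab} directly from the three identities already derived for the level set $\Sigma_s$: the Gauss equation \eqref{eqrm}, the formula \eqref{eqric} for the intrinsic Ricci curvature $R^{\Sigma_s}_{\alpha\alpha}$, and the Weyl decomposition \eqref{eqweyl}, together with the value $R^{\Sigma_s} = (n-2)(m+1)\rho$ from \eqref{eqsc}. All of these hold for $2 \leq \alpha, \beta \leq n$ with $\alpha \neq \beta$; the level set $\Sigma_s$ has dimension $n-1 \geq 3$, so the decomposition \eqref{eqweyl}, with its factors $\frac{1}{n-3}$, is legitimate precisely under the standing hypothesis $n \geq 4$ (for $n=4$ the level set is three-dimensional, $W^{\Sigma_s}_{\alpha\beta}$ vanishes identically, and the formula degenerates in the expected way).

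First I would substitute \eqref{eqric} into \eqref{eqweyl}, replacing $R^{\Sigma_s}_{\alpha\alpha}$ by $(1+\frac1m)\mu_\alpha - |\nabla\varphi|^{-2}\nabla_{\nabla\varphi}P_{\alpha\alpha}$ and likewise for $R^{\Sigma_s}_{\beta\beta}$, and substitute \eqref{eqsc} for $R^{\Sigma_s}$. The constant terms then combine, using only $(1+\frac1m)m\rho = (m+1)\rho$, into $\frac{1}{n-3}(1+\frac1m)(\mu_\alpha + \mu_\beta - m\rho)$; thus $R^{\Sigma_s}_{\alpha\beta\alpha\beta}$ is expressed purely in terms of $W^{\Sigma_s}_{\alpha\beta}$, the eigenvalues $\mu_\alpha,\mu_\beta$, and the term $\nabla_{\nabla\varphi}(P_{\alpha\alpha}+P_{\beta\beta})/|\nabla\varphi|^{2}$.

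Then I would eliminate $R^{\Sigma_s}_{\alpha\beta\alpha\beta}$ in favour of the ambient sectional curvature $K_{\alpha\beta} = R_{\alpha\beta\alpha\beta}$ via the Gauss equation \eqref{eqrm}, namely $K_{\alpha\beta} = R^{\Sigma_s}_{\alpha\beta\alpha\beta} - |\nabla\varphi|^{-2}(\mu_\alpha - m\rho)(\mu_\beta - m\rho)$, and gather the two $|\nabla\varphi|^{-2}$ contributions into the single bracket displayed in \eqref{Kab}. Since every ingredient is already in hand, there is no genuine obstacle here; the only care required is the sign and normalization bookkeeping in \eqref{eqweyl} and \eqref{eqrm} and the verification that the $(1+\frac1m)$ constant collapses as claimed. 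I expect the whole argument to be a three-line computation.
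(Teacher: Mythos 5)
Your proposal is correct and is exactly the derivation the paper intends: the lemma is stated immediately after \eqref{eqrm}, \eqref{eqric}, \eqref{eqsc}, \eqref{eqweyl} with the words ``from \eqref{eqric}, \eqref{eqrm} and \eqref{eqweyl} we have,'' and your substitution of \eqref{eqric} and \eqref{eqsc} into \eqref{eqweyl} followed by the Gauss equation, with the constants collapsing via $(1+\frac1m)m\rho=(m+1)\rho$, reproduces \eqref{Kab} verbatim. Your side remark about the $n=4$ case (three-dimensional level sets, vanishing Weyl tensor) is also accurate.
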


    \begin{corollary}\label{K2bmub}
    Let $n\geq4,\ (M^n,g,u,\lambda)$ be a nontrivial compact n-dimensional $m$-quasi-Einstein manifold with boundary and constant scalar curvature $R = \frac{(n-2)m+n}{m+1}\lambda$, $m>1$, then
        \begin{equation}\label{eqK2bmub}
        \begin{aligned}
             -2 \sum_{\beta=3}^n K_{2 \beta} \mu_\beta   = & -2 \sum_{\beta=3}^n W_{2 \beta}^{\Sigma_s} \mu_\beta\\
            & +\frac{2}{n-3} \frac{1}{|\nabla \varphi|^2}\left(k_0 m \rho \nabla \varphi \cdot \nabla P_{22}-\nabla \varphi \cdot \nabla P_{22}^2\right) \\
            & -\frac{2}{|\nabla \varphi|^2} \mu_2\left(\mu_2-m \rho\right)^2 \\
            & +\frac{2}{n-3} \frac{m+1}{m} \mu_2\left(2 \mu_2-(n-1) m \rho\right). 
        \end{aligned}
        \end{equation}
    \end{corollary}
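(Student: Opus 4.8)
The plan is to substitute \eqref{Kab} with $\alpha=2$ into $-2\sumb K_{2\beta}\mu_\beta$ and then collapse the resulting sums using the constancy of $\tr P$ and $|P|^2$. Concretely, from \eqref{trace2}, \eqref{square2} (equivalently, from the two identities recorded at the start of the proof of Corollary~\ref{K1amua}) one has $\suma\mu_\alpha=k_0m\rho$, $\suma\mu_\alpha^2=k_0m^2\rho^2$, $\suma\nabla P_{\alpha\alpha}=\nabla(\tr P)=0$, and $\suma\mu_\alpha\nabla P_{\alpha\alpha}=\tfrac12\nabla|P|^2=0$. Peeling off the $\alpha=2$ term, these become $\sumb\mu_\beta=k_0m\rho-\mu_2$, $\sumb\mu_\beta^2=k_0m^2\rho^2-\mu_2^2$, and $\sumb\mu_\beta\nabla_{\nabla\varphi}P_{\beta\beta}=-\mu_2\nabla_{\nabla\varphi}P_{22}$.

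Formula \eqref{Kab} expresses $-2K_{2\beta}\mu_\beta$ as the sum of a Weyl term $-2W_{2\beta}^{\Sigma_s}\mu_\beta$, a ``curvature-free'' term $-\tfrac{2}{n-3}\bigl(1+\tfrac1m\bigr)(\mu_2+\mu_\beta-m\rho)\mu_\beta$, and a gradient/quadratic term $\tfrac{2}{|\nabla\varphi|^2}\bigl(\tfrac1{n-3}\nabla_{\nabla\varphi}(P_{22}+P_{\beta\beta})+(\mu_2-m\rho)(\mu_\beta-m\rho)\bigr)\mu_\beta$. I would sum each over $\beta=3,\dots,n$ and insert the identities above. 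For the curvature-free term,
\[
\sumb(\mu_2+\mu_\beta-m\rho)\mu_\beta=(\mu_2-m\rho)(k_0m\rho-\mu_2)+k_0m^2\rho^2-\mu_2^2=-\mu_2\bigl(2\mu_2-(n-1)m\rho\bigr),
\]
which, after multiplying by $-\tfrac{2}{n-3}\bigl(1+\tfrac1m\bigr)$, gives the last line of \eqref{eqK2bmub}. For the quadratic part of the third term, $\sumb(\mu_\beta-m\rho)\mu_\beta=k_0m^2\rho^2-\mu_2^2-m\rho(k_0m\rho-\mu_2)=-\mu_2(\mu_2-m\rho)$, so this part contributes $-\tfrac{2}{|\nabla\varphi|^2}\mu_2(\mu_2-m\rho)^2$. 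For the gradient part, the $\nabla_{\nabla\varphi}P_{22}$ factor pairs with $\sumb\mu_\beta=k_0m\rho-\mu_2$ and the $\nabla_{\nabla\varphi}P_{\beta\beta}$ factor with the last trace identity above, giving $\tfrac{2}{(n-3)|\nabla\varphi|^2}\bigl(k_0m\rho\,\nabla_{\nabla\varphi}P_{22}-2\mu_2\nabla_{\nabla\varphi}P_{22}\bigr)$.

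To put this in the stated form I would finally observe that, in the parallel orthonormal frame in which $P$ is diagonal at $p$, differentiating $(P^2)_{22}=P_{2s}P_{s2}$ and using $P_{2s}(p)=\mu_2\delta_{2s}$ yields $\nabla_{\nabla\varphi}(P^2)_{22}=2\mu_2\nabla_{\nabla\varphi}P_{22}$ at $p$; that is, $2\mu_2\nabla_{\nabla\varphi}P_{22}=\nabla\varphi\cdot\nabla P_{22}^2$ in the notation of the statement, so the gradient part becomes $\tfrac{2}{n-3}\tfrac{1}{|\nabla\varphi|^2}\bigl(k_0m\rho\,\nabla\varphi\cdot\nabla P_{22}-\nabla\varphi\cdot\nabla P_{22}^2\bigr)$. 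Adding the Weyl, curvature-free, quadratic and gradient contributions then gives exactly \eqref{eqK2bmub}. The whole argument is algebraic given the lemmas already proved; the only places to be careful are the bookkeeping when passing between $\suma$ and $\sumb$ (tracking the extracted $\mu_2$-terms) and reading $P_{22}^2$ consistently as the $(2,2)$-entry of $P\cdot P$ rather than $(P_{22})^2$, which is the single step where a stray factor of $2$ or a sign could slip in.
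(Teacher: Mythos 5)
Your proposal is correct and follows essentially the same route as the paper: substitute \eqref{Kab} with $\alpha=2$, sum over $\beta$, and collapse the sums using $\suma\mu_\alpha=k_0m\rho$, $\suma\mu_\alpha^2=k_0m^2\rho^2$, and $\suma\mu_\alpha\nabla P_{\alpha\alpha}=\tfrac12\nabla|P|^2=0$, exactly as in the paper's identities \eqref{cor361}--\eqref{cor363}. The only cosmetic difference is that the paper carries the $\tfrac12\nabla_{\nabla\varphi}|P|^2$ term explicitly before setting it to zero, and the reading of $\nabla P_{22}^2$ as $2\mu_2\nabla P_{22}$ at the diagonalizing point is the same in both arguments (either interpretation of $P_{22}^2$ gives this there), so your bookkeeping matches the paper's.
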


    \begin{proof}By using \eqref{trace2} and \eqref{square2} we get: 
        \begin{equation}\label{cor361}
            \begin{aligned}
                \sumb (\mu_2+\mu_\beta-m\rho)\mu_{\beta} = &  \mu_2(k_0m\rho-\mu_2) + (k_0m^2\rho^2-\mu_2^2) - m\rho(k_0m\rho-\mu_2)\\
                = & -\mu_2(2\mu_2-(n-1)m\rho),
            \end{aligned}
        \end{equation}
        \begin{equation}\label{cor362}
            \begin{aligned}
                \sumb (\mu_2-m\rho)(\mu_\beta-m\rho)\mu_{\beta} = & (\mu_2-m\rho)\Big(\suma(\mu_\alpha^2-m\rho\mu_\alpha)-\mu_2(\mu_2-m\rho)\Big)\\
                = & -\mu_2(\mu_2-m\rho)^2,
            \end{aligned}
        \end{equation}
        \begin{equation}\label{cor363}
            \begin{aligned}
                \sumb\mu_\beta\nabla_{\nabla \varphi}(P_{22}+P_{\beta\beta}) = & \Big(k_0m\rho\nabla_{\nabla \varphi}P_{22}-\mu_2\nabla_{\nabla \varphi}P_{22}\Big)  \\
                & + \Big( \sum_{i,j=1}^n \mu_i\delta_{ij}\nabla_{\nabla \varphi}P_{ij} - \mu_2\nabla_{\nabla \varphi}P_{22}\Big) \\ 
                = & \big(k_0m\rho\nabla_{\nabla \varphi}P_{22}-\nabla_{\nabla \varphi}P_{22}^2\big) + \frac1{2}\nabla_{\nabla \varphi}|P|^2 \\
                = & k_0m\rho\nabla_{\nabla \varphi}P_{22}-\nabla_{\nabla \varphi}P_{22}^2. 
            \end{aligned}
        \end{equation}
        Putting \eqref{cor361}-\eqref{cor363} into \eqref{Kab}, we complete the proof of Corollary~\ref{K2bmub}. 
    \end{proof}

    From~\ref{eqK1amua} and~\ref{eqK2bmub} and \eqref{Lmu1} we have:
    \begin{proposition}
        Let $n\geq4,\ (M^n,g,u,\lambda)$ be a nontrivial compact n-dimensional $m$-quasi-Einstein manifold with boundary and constant scalar curvature $R = \frac{(n-2)m+n}{m+1}\lambda$, $m>1$, then
        \begin{equation}\label{Lmu2}
        \begin{aligned}
            L_{m+2}(\mu_1+\mu_2)\leqslant & - \frac{4(m+1)}{(n-3)m}\mu_2(m\rho-\mu_2)+ \frac{2}{|\nabla\varphi|^2}\sumb\mu_\beta(\mu_\beta-m\rho)^2\\
            & - 2\sumb W_{2\beta}^{\Sigma_s}\mu_\beta + \frac2{n-3}\frac1{|\nabla\varphi|^2}(k_0m\rho\nabla_{\nabla \varphi}P_{22}-\nabla_{\nabla \varphi}P_{22}^2), 
        \end{aligned}
        \end{equation}
        in $M\backslash N$ in the sense of distribution, where $N$ is the critical set of $\varphi$. 
    \end{proposition}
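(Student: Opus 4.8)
The plan is to obtain \eqref{Lmu2} by inserting the two identities of Corollary~\ref{K1amua} and Corollary~\ref{K2bmub} into the distributional differential inequality \eqref{Lmu1} and then simplifying the resulting expression. Throughout we work on $M\setminus N$ in the $P$-adapted orthonormal frame $\{e_1,\dots,e_n\}$, $e_1=-\nabla u/|\nabla u|$, which was used to derive \eqref{Lmu1}. Since $k=k_0=n-2$, the ordering $0=\mu_1\leq\mu_2\leq\cdots\leq\mu_n$ established in Section~\ref{sec2} gives $\mu_1+\mu_2=\mu_2$, so the leading term on the right of \eqref{Lmu1} becomes $2(m+1)\rho\mu_2$. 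Because \eqref{eqK1amua} and \eqref{eqK2bmub} are genuine pointwise equalities, substituting them into \eqref{Lmu1} preserves the direction of the inequality and keeps it valid in the barrier (hence distributional) sense away from $N$.

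First I would use \eqref{eqK1amua} to rewrite the contribution of the $-2\suma K_{1\alpha}\mu_\alpha$ term, splitting off the $\alpha=2$ summand:
\begin{equation*}
    \frac{2}{|\nabla\varphi|^2}\suma\mu_\alpha(\mu_\alpha-m\rho)^2
    = \frac{2}{|\nabla\varphi|^2}\mu_2(\mu_2-m\rho)^2
    + \frac{2}{|\nabla\varphi|^2}\sumb\mu_\beta(\mu_\beta-m\rho)^2.
\end{equation*}
Then I would insert \eqref{eqK2bmub} for $-2\sumb K_{2\beta}\mu_\beta$. The key observation is that the term $-\frac{2}{|\nabla\varphi|^2}\mu_2(\mu_2-m\rho)^2$ on the third line of \eqref{eqK2bmub} cancels exactly the $\alpha=2$ summand just isolated. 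After this cancellation the surviving curvature and Hessian terms are precisely
\begin{equation*}
    \frac{2}{|\nabla\varphi|^2}\sumb\mu_\beta(\mu_\beta-m\rho)^2,
    \qquad -2\sumb W_{2\beta}^{\Sigma_s}\mu_\beta,
    \qquad \frac{2}{n-3}\frac{1}{|\nabla\varphi|^2}\bigl(k_0m\rho\,\nabla_{\nabla\varphi}P_{22}-\nabla_{\nabla\varphi}P_{22}^2\bigr),
\end{equation*}
which are already three of the right-hand terms of \eqref{Lmu2}.

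It then remains to collapse the purely scalar leftovers, namely $2(m+1)\rho\mu_2$ together with the last line $\frac{2}{n-3}\frac{m+1}{m}\mu_2\bigl(2\mu_2-(n-1)m\rho\bigr)$ of \eqref{eqK2bmub}. Factoring out $\frac{2(m+1)\mu_2}{m(n-3)}$, the bracket collapses via $m(n-3)\rho+2\mu_2-(n-1)m\rho=2(\mu_2-m\rho)$, so the sum equals $-\frac{4(m+1)}{(n-3)m}\mu_2(m\rho-\mu_2)$, which is exactly the first term on the right of \eqref{Lmu2}. Assembling all pieces gives the claimed inequality on $M\setminus N$.

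I do not expect a genuine obstacle: the argument is bookkeeping once Corollaries~\ref{K1amua} and~\ref{K2bmub} are in hand. The only points requiring care are (a) tracking which summand cancels which — in particular noticing that the $\alpha=2$ piece implicitly hidden in $-\suma K_{1\alpha}\mu_\alpha$ is annihilated by the term generated when the coefficient $\mu_2+\mu_\beta-m\rho$ is split inside Corollary~\ref{K2bmub}; and (b) recording that \eqref{Lmu1}, and therefore \eqref{Lmu2}, is to be read in the distributional sense on $M\setminus N$ rather than pointwise, since $\mu_1+\mu_2$ is only Lipschitz where the eigenvalues of $\Ric$ collide.
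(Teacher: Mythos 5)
Your proposal is correct and is exactly the paper's argument: the paper derives \eqref{Lmu2} by substituting \eqref{eqK1amua} and \eqref{eqK2bmub} directly into \eqref{Lmu1}, and your bookkeeping (the cancellation of the $\frac{2}{|\nabla\varphi|^2}\mu_2(\mu_2-m\rho)^2$ terms and the collapse $m(n-3)\rho+2\mu_2-(n-1)m\rho=2(\mu_2-m\rho)$ yielding $-\frac{4(m+1)}{(n-3)m}\mu_2(m\rho-\mu_2)$) checks out. The distributional/barrier reading on $M\setminus N$ is likewise handled as in the paper.
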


    We also need the following lemma:
    \begin{lemma}
    Let $n\geq4,\ (M^n,g,u,\lambda)$ be a nontrivial compact n-dimensional $m$-quasi-Einstein manifold with boundary and constant scalar curvature $R = \frac{(n-2)m+n}{m+1}\lambda$, $m>1$, then
        \begin{equation}
            \sumb \mu_\beta(\mu_\beta-m\rho)^2 \leq 2k_0m^2\rho^2\mu_2, 
        \end{equation}
        \begin{equation}\label{ineqb2b2}
            \sumb \mu_\beta^2(\mu_\beta-m\rho)^2 \leq 2k_0m^3\rho^3\mu_2.             
        \end{equation}
    \end{lemma}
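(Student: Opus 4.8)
The statement is a purely algebraic fact about the ordered eigenvalues $0=\mu_1\le\mu_2\le\cdots\le\mu_n$ of $P$. By this point the geometry has been used only to record that $\rho>0$, that $\mu_1=0$, and that $\suma\mu_\alpha=k_0m\rho$ and $\suma\mu_\alpha^2=k_0m^2\rho^2$ (these are \eqref{trace2} and \eqref{square2}, with $k_0=n-2$). So the plan is to bound $\sumb\mu_\beta(\mu_\beta-m\rho)^2$ and $\sumb\mu_\beta^2(\mu_\beta-m\rho)^2$ from these two moment identities together with $\mu_\beta\ge0$ and one extra application of Cauchy--Schwarz.

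The first step is a sharp pointwise bound. Let $\mu_n$ be the largest eigenvalue (necessarily positive, since $\suma\mu_\alpha=k_0m\rho>0$); applying Cauchy--Schwarz to the $k_0$ numbers $\mu_2,\dots,\mu_{n-1}$, whose sum is $k_0m\rho-\mu_n$ and whose sum of squares is $k_0m^2\rho^2-\mu_n^2$, gives $(k_0m\rho-\mu_n)^2\le k_0(k_0m^2\rho^2-\mu_n^2)$, which rearranges to $(k_0+1)\mu_n^2\le 2k_0m\rho\,\mu_n$, hence $0\le\mu_\beta\le\frac{2k_0}{k_0+1}m\rho$ for every $\beta$. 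The second step is to recenter: set $\nu_\alpha:=\mu_\alpha-m\rho$. Then $\suma\nu_\alpha=-m\rho$ and, after the cancellation $k_0-2k_0+(n-1)=1$, $\suma\nu_\alpha^2=m^2\rho^2$; in particular $\sumb\nu_\beta^2=m^2\rho^2-\nu_2^2=\mu_2(2m\rho-\mu_2)$, while the eigenvalue bound becomes $\nu_\beta\le d:=\frac{k_0-1}{k_0+1}m\rho$ and $\mu_\beta\ge0$ gives $|\nu_\beta|\le m\rho$.

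For the first inequality, expand $\mu_\beta(\mu_\beta-m\rho)^2=m\rho\,\nu_\beta^2+\nu_\beta^3$; since $\nu_\beta^3\le d\,\nu_\beta^2$ when $\nu_\beta>0$ and $\nu_\beta^3\le0$ otherwise, $\sumb\nu_\beta^3\le d\sumb\nu_\beta^2$, so
\[
\sumb\mu_\beta(\mu_\beta-m\rho)^2\le(m\rho+d)\,\mu_2(2m\rho-\mu_2)\le\tfrac{2k_0}{k_0+1}m\rho\cdot2m\rho\cdot\mu_2\le 2k_0m^2\rho^2\mu_2,
\]
using $m\rho+d=\frac{2k_0}{k_0+1}m\rho$, then $2m\rho-\mu_2\le2m\rho$, then $\frac{4k_0}{k_0+1}\le2k_0$. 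For the second inequality, expand $\mu_\beta^2(\mu_\beta-m\rho)^2=m^2\rho^2\nu_\beta^2+2m\rho\,\nu_\beta^3+\nu_\beta^4$; bounding $\nu_\beta^3$ as above and $\nu_\beta^4$ by $d^2\nu_\beta^2$ on $\{\nu_\beta>0\}$ and by $m^2\rho^2\nu_\beta^2$ on $\{\nu_\beta\le0\}$, the sum of the terms $2m\rho\,\nu_\beta^3+\nu_\beta^4$ is at most $(2m\rho\,d+d^2)\sum_{\nu_\beta>0}\nu_\beta^2+m^2\rho^2\sum_{\nu_\beta\le0}\nu_\beta^2\le\max(2m\rho\,d+d^2,\,m^2\rho^2)\sumb\nu_\beta^2$, giving
\[
\sumb\mu_\beta^2(\mu_\beta-m\rho)^2\le\big[m^2\rho^2+\max(2m\rho\,d+d^2,\,m^2\rho^2)\big]\,\mu_2(2m\rho-\mu_2).
\]
A short computation gives $2m\rho\,d+d^2=\frac{(k_0-1)(3k_0+1)}{(k_0+1)^2}m^2\rho^2\le(k_0-1)m^2\rho^2$ (equivalent to $k_0(k_0-1)\ge0$), while $m^2\rho^2\le(k_0-1)m^2\rho^2$ holds since $k_0=n-2\ge2$; so the bracket is at most $k_0m^2\rho^2$, and using $2m\rho-\mu_2\le2m\rho$ once more the right-hand side is at most $2k_0m^3\rho^3\mu_2$. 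The degenerate case $\mu_2=0$, where $\sumb\nu_\beta^2=0$ forces all $\nu_\beta=0$ and both sides vanish, is absorbed by these same estimates.

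The only delicate point is the sharp eigenvalue bound in the first step. The crude bound $\mu_\beta\le2m\rho$, immediate from $\suma\nu_\alpha^2=m^2\rho^2$, already suffices for the first inequality and for the second whenever $n\ge6$; but for $n=4,5$ one genuinely needs the factor $\frac{2k_0}{k_0+1}<2$ coming from Cauchy--Schwarz, since that is exactly what makes $2m\rho\,d+d^2\le(k_0-1)m^2\rho^2$ hold. Everything else is routine bookkeeping: the two expansions in $\nu_\beta$, the sign-splitting that discards negative cubes and controls quartics, and the identity $\sumb\nu_\beta^2=\mu_2(2m\rho-\mu_2)$.
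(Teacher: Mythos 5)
Your proof is correct, but it follows a different route from the paper's. The paper's argument is a two-line affair: since every $\mu_\beta\ge 0$, each sum of products is bounded by the product of sums, $\sumb \mu_\beta(\mu_\beta-m\rho)^2 \le \big(\sumb\mu_\beta\big)\big(\sumb(\mu_\beta-m\rho)^2\big)$ and likewise with $\mu_\beta^2$ in place of $\mu_\beta$; the two factor sums are then computed exactly from \eqref{trace2} and \eqref{square2} as $k_0m\rho-\mu_2$ (resp.\ $k_0m^2\rho^2-\mu_2^2$) and $\mu_2(2m\rho-\mu_2)$, giving $(k_0m\rho-\mu_2)(2m\rho-\mu_2)\mu_2$ and $(k_0m^2\rho^2-\mu_2^2)(2m\rho-\mu_2)\mu_2$, after which the crude bounds $k_0m\rho-\mu_2\le k_0m\rho$, $k_0m^2\rho^2-\mu_2^2\le k_0m^2\rho^2$, $2m\rho-\mu_2\le 2m\rho$ finish both inequalities at once, with no per-eigenvalue bound needed. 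You instead recenter to $\nu_\beta=\mu_\beta-m\rho$, expand into moments, and control the cubic and quartic terms by a pointwise eigenvalue bound $\mu_\beta\le\frac{2k_0}{k_0+1}m\rho$ obtained from Cauchy--Schwarz on the remaining eigenvalues, together with sign-splitting; all of your identities ($\suma\nu_\alpha^2=m^2\rho^2$, $\sumb\nu_\beta^2=\mu_2(2m\rho-\mu_2)$) and the arithmetic ($m\rho+d=\frac{2k_0}{k_0+1}m\rho$, $2m\rho\,d+d^2\le(k_0-1)m^2\rho^2$, and $m^2\rho^2\le(k_0-1)m^2\rho^2$ for $k_0\ge 2$) check out, and your observation about exactly where the sharpened bound is needed ($n=4,5$ for the quartic inequality) is accurate. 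What your approach buys is the extra structural fact $\mu_n\le\frac{2k_0}{k_0+1}m\rho$, which is stronger than the bound $\mu_2<m\rho$ the paper uses elsewhere; what it costs is length and a case analysis that the paper's product-of-sums trick avoids entirely.
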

    \begin{proof}
        Since $\mu_i\geq 0$, we use \eqref{trace2} and \eqref{square2} to derive the following estimates:
        \begin{equation*}
            \begin{aligned}
                &\sumb \mu_\beta(\mu_\beta-m\rho)^2 \\
                \leq& (\sumb \mu_\beta)(\sumb (\mu_\beta-m\rho)^2)\\
                =&(k_0m\rho-\mu_2)(k_0m^2\rho^2 - 2m\rho(k_0m\rho-\mu_2)+k_0m^2\rho^2-\mu_2^2)\\
                =&(k_0m\rho-\mu_2)(2m\rho-\mu_2)\mu_2\\
                \leq&2k_0m^2\rho^2\mu_2,
            \end{aligned}
        \end{equation*}
        \begin{equation*}
            \begin{aligned}
                &\sumb \mu_\beta^2(\mu_\beta-m\rho)^2 \\
                \leq& (\sumb \mu_\beta^2)(\sumb (\mu_\beta-m\rho)^2)\\
                =&(k_0m^2\rho^2-\mu_2^2)(k_0m^2\rho^2 - 2m\rho(k_0m\rho-\mu_2)+k_0m^2\rho^2-\mu_2^2)\\
                =&(k_0m^2\rho^2-\mu_2^2)(2m\rho-\mu_2)\mu_2\\
                \leq&2k_0m^3\rho^3\mu_2. 
            \end{aligned}
        \end{equation*}
    \end{proof}

\section{An estimate of $|\nabla P|^2$}\label{sec4}
    In this section we will give an estimate of $|\nabla P|^2$. 

    \begin{lemma}
        Let $n\geq4,\ (M^n,g,u,\lambda)$ be a nontrivial compact n-dimensional $m$-quasi-Einstein manifold with boundary and constant scalar curvature $R = \frac{(n-2)m+n}{m+1}\lambda$, $m>1$, then
        \begin{equation}\label{np2}
            |\nabla P|^2 = 2\sum_{\alpha\not=\beta}K_{\alpha\beta}\mu_\alpha\mu_\beta - 2k_0(m+1)m^2\rho^3. 
        \end{equation}
    \end{lemma}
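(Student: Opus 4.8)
The plan is to derive \eqref{np2} from a weighted Bochner identity applied to $|P|^2$, which is \emph{constant} by \eqref{square2}, combined with the formula for $L_{m+2}P$ established in the proof of Corollary~\ref{LP}.

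First I would record the Leibniz-type identity for the drift Laplacian on a symmetric $2$-tensor. From $\Delta|P|^2 = 2|\nabla P|^2 + 2\langle P,\Delta P\rangle$ — the $\Delta$ on the right being the connection Laplacian, which is the one occurring in Lemma~\ref{lem3.1} — together with $\langle\nabla u,\nabla|P|^2\rangle = 2\langle P,\nabla_{\nabla u}P\rangle$, one gets
\[
\tfrac12 L_{m+2}|P|^2 = |\nabla P|^2 + \langle P, L_{m+2}P\rangle .
\]
Since $|P|^2 = k_0 m^2\rho^2$ is constant, the left-hand side vanishes, so $|\nabla P|^2 = -\,P_{ik}\,L_{m+2}P_{ik}$.

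Next I would substitute $L_{m+2}P_{ik} = 2(m+1)\rho P_{ik} + 2R_{jiks}P_{js}$ (the $u$-free form obtained in the proof of Corollary~\ref{LP}) and contract with $P_{ik}$ to get
\[
|\nabla P|^2 = -2(m+1)\rho\,|P|^2 - 2R_{jiks}P_{ik}P_{js}.
\]
The first term is $-2(m+1)\rho\cdot k_0 m^2\rho^2 = -2k_0(m+1)m^2\rho^3$ by \eqref{square2}, which is the constant appearing in \eqref{np2}. For the second term I would work at a fixed point in an orthonormal frame diagonalizing $P$, writing $P_{ij} = \mu_i\delta_{ij}$; then $R_{jiks}P_{ik}P_{js} = \sum_{i,j}R_{jiij}\mu_i\mu_j$, and the curvature symmetries together with the convention $K_{ij} = R_{ijij}$ give $R_{jiij} = -K_{ij}$, so this contraction equals $-\sum_{i\neq j}K_{ij}\mu_i\mu_j$. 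Because $\nabla u\neq 0$ off the critical set $N$ forces $\mu_1 = 0$ by \eqref{Ricnu=}, only indices $\geq 2$ contribute, and hence $-2R_{jiks}P_{ik}P_{js} = 2\sum_{\alpha\neq\beta}K_{\alpha\beta}\mu_\alpha\mu_\beta$. Adding the two pieces gives \eqref{np2}.

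The computation is largely routine; the points that need care are (a) confirming that the Laplacian in Lemma~\ref{lem3.1}, hence in Corollary~\ref{LP}, is the connection Laplacian, so that the contracted Bochner product rule applies as stated, and (b) keeping the index contractions and the Riemann-tensor sign conventions consistent — in particular matching the convention under which $R_{jiks}g_{js} = -R_{ik}$ (as used in the proof of Corollary~\ref{LP}), which is what makes $R_{jiij} = -K_{ij}$. There is no real analytic difficulty.
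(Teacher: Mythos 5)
Your proposal is correct and follows essentially the same route as the paper: constancy of $|P|^2$ gives $|\nabla P|^2=-P_{ik}L_{m+2}P_{ik}$, into which one substitutes the identity of Corollary~\ref{LP} (in the $u$-free form, which is indeed the one used consistently in the paper; the $u$ in the stated corollary is a typo) and diagonalizes $P$ with $\mu_1=0$ off the critical set. Your sign bookkeeping $R_{jiij}=-K_{ij}$ matches the paper's conventions, so nothing further is needed.
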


    \begin{proof}
        From \eqref{eqLP}, we have
        \begin{equation*}
            L_{m+2} P_{ij} = 2(m+1)\rho P_{ij} + 2R_{kijs}P_{ks}. 
        \end{equation*}
        Since $|P|^2$ is constant, $0=\frac1{2}L_{m+2}|P|^2 = |\nabla P|^2 + P_{ij}L_{m+2}P_{ij}$, so
        \begin{equation*}
            \begin{aligned}
                |\nabla P|^2 = & \sum_{i,j}-P_{ij}L_{m+2}P_{ij}\\
                =& 2\sum_{i,j} K_{ij}\mu_i\mu_j - 2(m+1)\rho\sum_{i=1}^n\mu_i^2\\
                =& 2\sum_{\alpha\not=\beta}K_{\alpha\beta}\mu_\alpha\mu_\beta - 2k_0(m+1)m^2\rho^3. 
            \end{aligned}
        \end{equation*}
    \end{proof}

    Now we get the following estimate: 
    \begin{lemma}
        Let $n\geq4,\ (M^n,g,u,\lambda)$ be a nontrivial compact n-dimensional $m$-quasi-Einstein manifold with boundary and constant scalar curvature $R = \frac{(n-2)m+n}{m+1}\lambda$, $m>1$, then there exists a constant $C= C(n,m,\rho)>0$, such that 
        \begin{equation}\label{inNP2_1}
        \begin{aligned}
            |\nabla P|^2 \leq& \frac{C}{|\nabla\varphi|^2}|\nabla P|^2 + C\mu_2(1+\frac1{|\nabla\varphi|^2}) \\
            &+4\sum_{2\leq\alpha<\beta\leq n}W_{\alpha\beta}^{\Sigma_s}\mu_\alpha\mu_\beta + \frac{4m^2\rho^2}{n-3}(K_{12}-\rho). 
        \end{aligned}
        \end{equation}
    \end{lemma}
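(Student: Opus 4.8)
The plan is to expand the identity \eqref{np2} by substituting the Gauss‑equation formula \eqref{Kab} for every sectional curvature $K_{\alpha\beta}$ occurring in it. Since $\mu_1=0$, the sum in \eqref{np2} runs only over $2\le\alpha\ne\beta\le n$, and \eqref{Kab} breaks $2\sum_{\alpha\ne\beta}K_{\alpha\beta}\mu_\alpha\mu_\beta$ into four groups: a Weyl group, a purely algebraic group in the $\mu_\alpha$, a gradient group built from $\nabla_{\nabla\varphi}P_{\alpha\alpha}$, and a mixed group. I work in the local $P$‑diagonalizing frame with $e_1=-\nabla u/|\nabla u|$ used for \eqref{np2}, and I use throughout: the algebraic constraints \eqref{trace2}, \eqref{square2} and their consequence $\suma\mu_\alpha(\mu_\alpha-m\rho)=0$; the differential consequences $\suma\nabla_{\nabla\varphi}P_{\alpha\alpha}=\nabla_{\nabla\varphi}\tr P=0$ and $\suma\mu_\alpha\nabla_{\nabla\varphi}P_{\alpha\alpha}=\tfrac12\nabla_{\nabla\varphi}|P|^2=0$ (both $\tr P$ and $|P|^2$ being constant); the boundedness $\mu_2\le C(n,m,\rho)$ from $(n-1)\mu_2^2\le\suma\mu_\alpha^2$, so that every power of $\mu_2$ is $\le C\mu_2$; and the bounds $\sumb\mu_\beta(\mu_\beta-m\rho)^2\le 2k_0m^2\rho^2\mu_2$ and \eqref{ineqb2b2}, which combined with \eqref{trace2}--\eqref{square2} yield $\suma\mu_\alpha(\mu_\alpha-m\rho)^2\le C\mu_2$, $\suma\mu_\alpha^2(\mu_\alpha-m\rho)^2\le C\mu_2$, and $\sumb(\mu_\beta-m\rho)^2=\mu_2(2m\rho-\mu_2)$.

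The Weyl group gives exactly $4\sum_{2\le\alpha<\beta\le n}W_{\alpha\beta}^{\Sigma_s}\mu_\alpha\mu_\beta$ by the $\alpha\leftrightarrow\beta$ symmetry. In the algebraic group $\tfrac{2(m+1)}{m(n-3)}\sum_{\alpha\ne\beta}(\mu_\alpha+\mu_\beta-m\rho)\mu_\alpha\mu_\beta$, the constraints give $\suma\mu_\alpha^3=k_0m^3\rho^3+\suma\mu_\alpha(\mu_\alpha-m\rho)^2$, hence $\sum_{\alpha\ne\beta}(\mu_\alpha+\mu_\beta-m\rho)\mu_\alpha\mu_\beta=k_0(k_0-1)m^3\rho^3-2\suma\mu_\alpha(\mu_\alpha-m\rho)^2$. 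Because $k_0=n-2$, so that $k_0-1=n-3$, the constant part of this group equals $2k_0(m+1)m^2\rho^3$ and therefore cancels the term $-2k_0(m+1)m^2\rho^3$ of \eqref{np2}; what survives, $-\tfrac{4(m+1)}{m(n-3)}\suma\mu_\alpha(\mu_\alpha-m\rho)^2\le 0$, is simply discarded. (This exact cancellation is the structural reason one restricts to $k_0=n-2$.) Likewise, using $\suma\mu_\alpha(\mu_\alpha-m\rho)=0$ the mixed group collapses to $\tfrac{2}{|\nabla\varphi|^2}\suma\mu_\alpha^2(\mu_\alpha-m\rho)^2\le C\mu_2/|\nabla\varphi|^2$.

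The gradient group $-\tfrac{2}{(n-3)|\nabla\varphi|^2}\sum_{\alpha\ne\beta}\nabla_{\nabla\varphi}(P_{\alpha\alpha}+P_{\beta\beta})\mu_\alpha\mu_\beta$ is the delicate step. The two vanishing identities turn it into $\tfrac{4}{(n-3)|\nabla\varphi|^2}\suma\mu_\alpha^2\nabla_{\nabla\varphi}P_{\alpha\alpha}$, and then writing $\mu_\alpha^2=(\mu_\alpha-m\rho)^2+(2m\rho\mu_\alpha-m^2\rho^2)$ and using those identities once more reduces it to $\tfrac{4}{(n-3)|\nabla\varphi|^2}\big[(m\rho-\mu_2)^2\nabla_{\nabla\varphi}P_{22}+\sumb(\mu_\beta-m\rho)^2\nabla_{\nabla\varphi}P_{\beta\beta}\big]$. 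Now expand $(m\rho-\mu_2)^2=m^2\rho^2-\mu_2(2m\rho-\mu_2)$ and substitute \eqref{eqK1a} with $\alpha=2$, i.e. $\nabla_{\nabla\varphi}P_{22}/|\nabla\varphi|^2=(K_{12}-\rho)+\tfrac{\mu_2}{m}+\mu_2(\mu_2-m\rho)/|\nabla\varphi|^2$: this produces precisely the term $\tfrac{4m^2\rho^2}{n-3}(K_{12}-\rho)$ of \eqref{inNP2_1}. Every leftover is of lower order: $\sumb(\mu_\beta-m\rho)^2\le C\mu_2$ and $|\nabla_{\nabla\varphi}Q|\le|\nabla\varphi||\nabla P|$ give, via Cauchy--Schwarz, $|\sumb(\mu_\beta-m\rho)^2\nabla_{\nabla\varphi}P_{\beta\beta}|\le C\sqrt{\mu_2}\,|\nabla\varphi|\,|\nabla P|$; the terms carrying $\mu_2(2m\rho-\mu_2)\nabla_{\nabla\varphi}P_{22}$ and $\mu_2(2m\rho-\mu_2)(K_{12}-\rho)$ (using $|K_{12}-\rho|\le|\nabla P|/|\nabla\varphi|+C/|\nabla\varphi|^2+C$ from \eqref{eqK1a}) are $\le C\mu_2(|\nabla P|/|\nabla\varphi|+1/|\nabla\varphi|^2+1)$; and Young's inequality converts each of these into $\tfrac{C}{|\nabla\varphi|^2}|\nabla P|^2+C\mu_2(1+1/|\nabla\varphi|^2)$. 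The purely algebraic remainders are already $\le C\mu_2(1+1/|\nabla\varphi|^2)$. Collecting the four groups gives \eqref{inNP2_1}.

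The main obstacle I expect is the bookkeeping in the gradient step: $\suma\mu_\alpha^2\nabla_{\nabla\varphi}P_{\alpha\alpha}$ must be brought to the form $(m\rho-\mu_2)^2\nabla_{\nabla\varphi}P_{22}+\sumb(\mu_\beta-m\rho)^2\nabla_{\nabla\varphi}P_{\beta\beta}$ using \emph{both} $\suma\nabla_{\nabla\varphi}P_{\alpha\alpha}=0$ and $\suma\mu_\alpha\nabla_{\nabla\varphi}P_{\alpha\alpha}=0$ before it is compared with \eqref{eqK1a}; using only one of the two yields the wrong coefficient, indeed the wrong sign, in front of $K_{12}-\rho$. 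One must also keep the symmetric‑function computations careful enough to witness the cancellation of the constant $2k_0(m+1)m^2\rho^3$ (which hinges on $k_0-1=n-3$) and to confirm that all the $\mu$‑quadratic remainders ($\suma\mu_\alpha(\mu_\alpha-m\rho)^2$, $\suma\mu_\alpha^2(\mu_\alpha-m\rho)^2$, $\sumb(\mu_\beta-m\rho)^2$ and their weighted variants) are genuinely $O(\mu_2)$ — the fact that makes $\mu_2$, the smallest eigenvalue of $P$, the right quantity measuring the deviation from the rigid profile $(\mu_i)=(0,0,m\rho,\dots,m\rho)$.
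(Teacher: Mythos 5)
Your proposal is correct and is essentially the paper's own argument: both start from \eqref{np2}, substitute \eqref{Kab}, split into the Weyl, algebraic, gradient and mixed groups, use the trace identities $\suma\nabla_{\nabla\varphi}P_{\alpha\alpha}=0$ and $\suma\mu_\alpha\nabla_{\nabla\varphi}P_{\alpha\alpha}=0$ to reduce the gradient group to $\suma(m\rho-\mu_\alpha)^2\nabla_{\nabla\varphi}P_{\alpha\alpha}$, extract $\tfrac{4m^2\rho^2}{n-3}(K_{12}-\rho)$ from the $\alpha=2$ term via \eqref{eqK1a}, cancel the constant $2k_0(m+1)m^2\rho^3$ using $k_0-1=n-3$, and absorb the remainders into $\tfrac{C}{|\nabla\varphi|^2}|\nabla P|^2+C\mu_2(1+\tfrac{1}{|\nabla\varphi|^2})$. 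The only differences are cosmetic (Cauchy--Schwarz/Young in place of the paper's $2ab\le a^2+b^2$ bookkeeping for the lower-order terms).
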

    \begin{proof}
        From \eqref{Kab}, we have
        \begin{equation*}
            \begin{aligned}
                &2\sum_{\alpha\not=\beta}K_{\alpha\beta}\mu_\alpha\mu_\beta\\
                = & 4\sum_{2\leq\alpha<\beta\leq n}W_{\alpha\beta}^{\Sigma_s}\mu_\alpha\mu_\beta \\
                & {+ 2\sum_{\alpha\not=\beta}\frac1{n-3}(1+\frac1{m})(\mu_\alpha+\mu_\beta-m\rho)\mu_\alpha\mu_\beta} \\
                & {- 2\sum_{\alpha\not=\beta}\frac1{|\nabla\varphi|^2}\Big(\frac1{n-3}\nabla_{\nabla \varphi}(P_{\alpha\alpha}+P_{\beta\beta})\Big)\mu_\alpha\mu_\beta}\\
                & {- 2\sum_{\alpha\not=\beta}\frac1{|\nabla\varphi|^2}(\mu_\alpha-m\rho)(\mu_\beta-m\rho)\mu_\alpha\mu_\beta}. 
            \end{aligned}
        \end{equation*}
        Notice that $(\mu_\alpha+\mu_\beta)\mu_\alpha\mu_\beta$ is symmetric in $\alpha$ and $\beta$, 
        \begin{equation*}
            \begin{aligned}
                {I} \coloneqq &{{\sum_{\alpha\not=\beta}(\mu_\alpha+\mu_\beta-m\rho)\mu_\alpha\mu_\beta}} \\
                =& \suma \sum_{\beta\not=\alpha} \big(2\mu_\alpha^2\mu_\beta - m\rho\mu_\alpha\mu_\beta\big). 
            \end{aligned}
        \end{equation*}
        Since $m\rho\suma \mu_\alpha = k_0m^2\rho^2 = \suma\mu_\alpha^2$, and $\mu_i\geq0$, we have
        \begin{equation*}
            \begin{aligned}
                {I} =& \suma \sum_{\beta\not=\alpha} \big(2\mu_\alpha^2\mu_\beta - m\rho\mu_\alpha\mu_\beta\big)\\
                =& \suma \big(2\mu_\alpha^2(k_0m\rho-\mu_\alpha)-m\rho\mu_\alpha(k_0m\rho-\mu_\alpha) \big)\\
                =& -2\suma\mu_\alpha^3+k_0(k_0+1)m^3\rho^3\\
                =& -2\suma\mu_\alpha(\mu_\alpha-m\rho)^2 + k_0(k_0-1)m^3\rho^3\\
                \leq& k_0(k_0-1)m^3\rho^3\\
                =& (n-3)k_0m^3\rho^3. 
            \end{aligned}
        \end{equation*}

        Similarly $(P_{\alpha\alpha}+P_{\beta\beta})\mu_\alpha\mu_\beta$ is symmetric in $\alpha$ and $\beta$, 
        \begin{equation*}
            \begin{aligned}
            {II} \coloneqq &{{-\sum_{\alpha\not=\beta}\frac{\nabla_{\nabla \varphi}(P_{\alpha\alpha}+P_{\beta\beta})}{|\nabla\varphi|^2}\mu_\alpha\mu_\beta}}\\
            =&-2\suma\sum_{\beta\not=\alpha}\frac{\nabla_{\nabla \varphi}P_{\alpha\alpha}}{|\nabla\varphi|^2}\mu_\alpha\mu_\beta. 
            \end{aligned}
        \end{equation*}
        Notice that $\nabla\tr(P)=\suma \nabla P_{\alpha\alpha}=0$, and $\nabla|P|^2=\suma \nabla P_{\alpha\alpha}\mu_\alpha=0$, 
        \begin{equation*}
            \begin{aligned}
            {II} =&-2\suma\sum_{\beta\not=\alpha}\frac{\nabla_{\nabla \varphi}P_{\alpha\alpha}}{|\nabla\varphi|^2}\mu_\alpha\mu_\beta\\
            =&-2\suma\frac{\nabla_{\nabla \varphi}P_{\alpha\alpha}}{|\nabla\varphi|^2}\mu_\alpha(k_0m\rho-\mu_\alpha)\\
            =&\frac{2}{|\nabla\varphi|^2}\suma(-k_0m\rho\nabla_{\nabla \varphi}P_{\alpha\alpha}\mu_\alpha +\nabla_{\nabla \varphi}P_{\alpha\alpha}\mu_\alpha^2)\\
            =&\frac{2}{|\nabla\varphi|^2}\suma(m^2\rho^2\nabla_{\nabla \varphi}P_{\alpha\alpha}-2m\rho\nabla_{\nabla \varphi}P_{\alpha\alpha}\mu_\alpha +\nabla_{\nabla \varphi}P_{\alpha\alpha}\mu_\alpha^2)\\
            =&2\suma\frac{\nabla_{\nabla \varphi}P_{\alpha\alpha}}{|\nabla\varphi|^2}(m\rho-\mu_\alpha)^2. \\
            \end{aligned}
        \end{equation*}

        We have the following calculation: 
        \begin{equation*}
            \begin{aligned}
            &2\suma\frac{\nabla_{\nabla \varphi}P_{\alpha\alpha}}{|\nabla\varphi|^2}(m\rho-\mu_\alpha)^2\\
            =&2\sumb\frac{\nabla_{\nabla \varphi}P_{\beta\beta}}{|\nabla\varphi|^2}(m\rho-\mu_\beta)^2+2\frac{\nabla_{\nabla \varphi}P_{22}}{|\nabla\varphi|^2}\Big(\mu_2(\mu_2-2m\rho)\Big)+2m^2\rho^2\frac{\nabla_{\nabla \varphi}P_{22}}{|\nabla\varphi|^2}\\
            \leq& (n-2)\frac{|\nabla P|^2}{|\nabla\varphi|^2} + (\sumb(m\rho-\mu_\beta)^2)^2 +\frac{|\nabla P|^2}{|\nabla\varphi|^2}+\mu_2^2(\mu_2-2m\rho)^2 + 2m^2\rho^2\frac{\nabla_{\nabla \varphi}P_{22}}{|\nabla\varphi|^2}\\
            = & (n-1)\frac{|\nabla P|^2}{|\nabla\varphi|^2}+ 2\mu_2^2(\mu_2-2m\rho)^2 + 2m^2\rho^2\frac{\nabla_{\nabla \varphi}P_{22}}{|\nabla\varphi|^2}. 
            \end{aligned}
        \end{equation*}
        Taking $\alpha=2$ in \eqref{eqK1a}, we have
        \begin{equation*}
            \frac{\nabla_{\nabla \varphi}P_{22}}{|\nabla\varphi|^2} = K_{12}-\rho+\frac{1}{m}\mu_2+\frac{\mu_2-m\rho}{|\nabla\varphi|^2}\mu_2, 
        \end{equation*}
        so we get
        \begin{equation*}
            \begin{aligned}
            II\leq& (n-1)\frac{|\nabla P|^2}{|\nabla\varphi|^2}+ 2\mu_2^2(\mu_2-2m\rho)^2\\
            &+ \mu_2 2m^2\rho^2(\frac1{m} + \frac{\mu_2-m\rho}{|\nabla\varphi|^2})+ 2m^2\rho^2(K_{12}-\rho)\\
            \leq&(n-1)\frac{|\nabla P|^2}{|\nabla\varphi|^2} + \mu_2 (8m^3\rho^3 + 2m\rho^2)\\
            &+ 2m^2\rho^2(K_{12}-\rho), 
            \end{aligned}
        \end{equation*}
        where the last inequality due to $\mu_2\leq\frac{\tr P-\mu_1}{n-1}=\frac{k_0m\rho}{n-1}<m\rho$. 

        \begin{equation*}
            \begin{aligned}
                {III} = &{- \sum_{\alpha\not=\beta}(\mu_\alpha-m\rho)(\mu_\beta-m\rho)\mu_\alpha\mu_\beta} \\
                =& -\suma\mu_\alpha(\mu_\alpha-m\rho)(k_0m^2\rho^2-\mu_\alpha^2-k_0m^2\rho^2+m\rho\mu_\alpha)\\
                =& \suma\mu_\alpha^2(\mu_\alpha-m\rho)^2\\
                \leq& 2k_0m^3\rho^3\mu_2 + \mu_2^2(\mu_2-m\rho)^2\\
                \leq& (2n-3)m^3\rho^3\mu_2, 
            \end{aligned}
        \end{equation*}
        where we used \eqref{ineqb2b2} and $\mu_2<m\rho$.  

        From the above calculation and \eqref{np2}, we have
        \begin{equation*}
            \begin{aligned}
                |\nabla P|^2 =& 2\sum_{\alpha\not=\beta}K_{\alpha\beta}\mu_\alpha\mu_\beta - 2k_0(m+1)m^2\rho^3\\
                =& 4\sum_{2\leq\alpha<\beta\leq n}W_{\alpha\beta}^{\Sigma_s}\mu_\alpha\mu_\beta- 2k_0(m+1)m^2\rho^3\\
                & {+ \frac{2}{n-3}(1+\frac1{m})I}+ {\frac{2}{n-3}II} + {\frac{2}{|\nabla\varphi|^2}III}\\
                \leq& 4\sum_{2\leq\alpha<\beta\leq n}W_{\alpha\beta}^{\Sigma_s}\mu_\alpha\mu_\beta- 2k_0(m+1)m^2\rho^3 {+ 2k_0(m+1)m^2\rho^3} \\
                & {+2\frac{n-1}{n-3}\frac{|\nabla P|^2}{|\nabla\varphi|^2} + \frac{\mu_2 }{n-3}(16m^3\rho^3 + 4m\rho^2)+ \frac{4m^2\rho^2}{n-3}(K_{12}-\rho)}\\
                & {+\frac{(4n-6)m^3\rho^3\mu_2}{|\nabla\varphi|^2}}\\
                =&\frac{2n-2}{n-3}\frac{|\nabla P|^2}{|\nabla\varphi|^2} + 4\sum_{2\leq\alpha<\beta\leq n}W_{\alpha\beta}^{\Sigma_s}\mu_\alpha\mu_\beta + \frac{4m^2\rho^2}{n-3}(K_{12}-\rho)\\
                &+ \mu_2( \frac{16m^3\rho^3+4m\rho^2}{n-3}+(4n-6)m^3\rho^3\frac{1}{|\nabla\varphi|^2}).
            \end{aligned}
        \end{equation*}
    \end{proof}

    Now we consider the terms with Weyl tensor (see Claim 4 in Proposition 4.2 of \cite{costa2024rigiditycompactquasieinsteinmanifolds}):
    \begin{lemma}\label{ineqW}For any $\epsilon>0$, we have
        \begin{equation}\label{eqW2b}
            \sumb W_{2\beta}^{\Sigma_s}\mu_\beta \leq\frac1{2}(2m\rho\epsilon \mu_2+\frac1{\epsilon}|W^{\Sigma_s}|^2),
        \end{equation}
        \begin{equation}\label{eqWab}
            \suma\sum_{\beta\not=\alpha} W_{\alpha\beta}^{\Sigma_s}\mu_\alpha\mu_\beta \leq(n-1)m\rho(2m\rho\epsilon \mu_2+\frac1{\epsilon}|W^{\Sigma_s}|^2).
        \end{equation}
    \end{lemma}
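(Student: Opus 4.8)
The plan is to derive both parts of the lemma from three ingredients: the trace-freeness of the Weyl tensor $W^{\Sigma_s}$ of the level set $\Sigma_s$, the algebraic identities forced by \eqref{trace2} and \eqref{square2}, and Young's inequality with a suitably chosen parameter. The only genuine decision is which rearrangement to perform before invoking Young's inequality.

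First I would record the structural fact that drives everything. Since $\{e_2,\dots,e_n\}$ is an orthonormal frame for $T\Sigma_s$ and $\Sigma_s$ has dimension $n-1\ge 3$, its Weyl tensor is totally trace-free, so $\sum_{\gamma}W^{\Sigma_s}_{\alpha\gamma\alpha\gamma}=0$ for each $\alpha$; as $W^{\Sigma_s}_{\alpha\alpha\alpha\alpha}=0$ this says $\sum_{\beta\ne\alpha}W_{\alpha\beta}^{\Sigma_s}=0$, and in particular $\sumb W_{2\beta}^{\Sigma_s}=0$. Consequently, for any constant $c$ we may replace $\mu_\beta$ by $\mu_\beta-c$ inside $\sumb W_{2\beta}^{\Sigma_s}\mu_\beta$, and replace $(\mu_\alpha,\mu_\beta)$ by $(\mu_\alpha-c,\mu_\beta-c)$ inside $\suma\sum_{\beta\ne\alpha}W_{\alpha\beta}^{\Sigma_s}\mu_\alpha\mu_\beta$, subtracting $c$ in each slot separately using the vanishing of the trace in that slot. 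I would take $c=m\rho$, since then \eqref{trace2} and \eqref{square2} (together with $\mu_1=0$, so that $\suma\mu_\alpha=\tr P$ and $\suma\mu_\alpha^2=|P|^2$) yield the clean identities $\sumb(\mu_\beta-m\rho)^2=(2m\rho-\mu_2)\mu_2$ and $\suma(\mu_\alpha-m\rho)^2=m^2\rho^2$; recall also $0\le\mu_2<m\rho$.

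For \eqref{eqW2b} I would write $\sumb W_{2\beta}^{\Sigma_s}\mu_\beta=\sumb W_{2\beta}^{\Sigma_s}(\mu_\beta-m\rho)$ and apply Young's inequality $|ab|\le\tfrac{\epsilon}{2}a^2+\tfrac1{2\epsilon}b^2$ termwise; the bound $\sumb(\mu_\beta-m\rho)^2\le 2m\rho\mu_2$ and the trivial estimate $\sumb(W_{2\beta}^{\Sigma_s})^2\le|W^{\Sigma_s}|^2$ then give exactly $m\rho\epsilon\mu_2+\tfrac1{2\epsilon}|W^{\Sigma_s}|^2$. For \eqref{eqWab} the replacement above gives $\suma\sum_{\beta\ne\alpha}W_{\alpha\beta}^{\Sigma_s}\mu_\alpha\mu_\beta=\suma\sum_{\beta\ne\alpha}W_{\alpha\beta}^{\Sigma_s}(\mu_\alpha-m\rho)(\mu_\beta-m\rho)$, and here it is essential to apply Young to $W_{\alpha\beta}^{\Sigma_s}$ against the whole product $(\mu_\alpha-m\rho)(\mu_\beta-m\rho)$ (not against a symmetric split $\tfrac12[(\mu_\alpha-m\rho)^2+(\mu_\beta-m\rho)^2]$). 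With the parameter $t=2(n-1)m\rho/\epsilon$ one gets a bound $\tfrac{(n-1)m\rho}{\epsilon}\sum_{\alpha\ne\beta}(W_{\alpha\beta}^{\Sigma_s})^2+\tfrac{\epsilon}{4(n-1)m\rho}\sum_{\alpha\ne\beta}(\mu_\alpha-m\rho)^2(\mu_\beta-m\rho)^2$; the first term is $\le\tfrac{(n-1)m\rho}{\epsilon}|W^{\Sigma_s}|^2$, and for the second I would use $\sum_{\alpha\ne\beta}(\mu_\alpha-m\rho)^2(\mu_\beta-m\rho)^2=\big(\suma(\mu_\alpha-m\rho)^2\big)^2-\suma(\mu_\alpha-m\rho)^4\le m^4\rho^4-(m\rho-\mu_2)^4=(2m\rho-\mu_2)\mu_2\big(m^2\rho^2+(m\rho-\mu_2)^2\big)\le 4m^3\rho^3\mu_2$, which makes the second term $\le\tfrac{m^2\rho^2\epsilon}{n-1}\mu_2\le 2(n-1)m^2\rho^2\epsilon\mu_2$. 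Collecting the two terms yields $(n-1)m\rho\big(2m\rho\epsilon\mu_2+\tfrac1\epsilon|W^{\Sigma_s}|^2\big)$.

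I expect the only obstacle to be recognizing the right rearrangement. A naive Young estimate applied directly to $W_{2\beta}^{\Sigma_s}\mu_\beta$, or the symmetric split of $(\mu_\alpha-m\rho)(\mu_\beta-m\rho)$ in the second inequality, leaves an uncoupled term $(\mu_2-m\rho)^2\approx m^2\rho^2$ that does not vanish as $\mu_2\to 0$ and hence cannot be absorbed into $m\rho\epsilon\mu_2$. Subtracting $m\rho$ first (so that the excluded index $\beta=2$ is removed from $\sumb(\mu_\beta-m\rho)^2$) and keeping the product intact (so that the large factor $(m\rho-\mu_2)^2$ always gets multiplied by the small quantity $\suma(\mu_\alpha-m\rho)^2-(m\rho-\mu_2)^2=(2m\rho-\mu_2)\mu_2$) forces every surviving summand to carry a factor proportional to $\mu_2$; after that the estimates are routine.
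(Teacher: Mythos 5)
Your proposal is correct. For \eqref{eqW2b} it is essentially the paper's own argument: both exploit $\sumb W^{\Sigma_s}_{2\beta}=0$ to replace $\mu_\beta$ by $\mu_\beta-m\rho$, use $\sumb(\mu_\beta-m\rho)^2=\mu_2(2m\rho-\mu_2)\le 2m\rho\mu_2$, and finish with Cauchy--Schwarz/Young (termwise Young versus Cauchy--Schwarz then Young is an immaterial difference). For \eqref{eqWab} your route genuinely differs in the decomposition: the paper subtracts $m\rho$ only in the $\beta$-slot for each fixed $\alpha\ge 3$, keeps the leftover $-m\rho W^{\Sigma_s}_{\alpha 2}$ terms explicitly, and reduces the double sum to repeated applications of the single-slot estimate, collecting a factor of roughly $\tfrac{n}{2}m\rho$ at the end; you instead use trace-freeness in both slots at once to write the sum as $\sum_{\alpha\ne\beta}W^{\Sigma_s}_{\alpha\beta}(\mu_\alpha-m\rho)(\mu_\beta-m\rho)$, apply Young to the whole product, and control $\sum_{\alpha\ne\beta}(\mu_\alpha-m\rho)^2(\mu_\beta-m\rho)^2=\big(\suma(\mu_\alpha-m\rho)^2\big)^2-\suma(\mu_\alpha-m\rho)^4\le 4m^3\rho^3\mu_2$ via \eqref{trace2}--\eqref{square2}. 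Your symmetric version is cleaner (no bookkeeping of the excluded index $\beta=2$ or of the correction terms), produces a bound in which every non-Weyl term automatically carries a factor of $\mu_2$, and lands exactly on the stated constant $(n-1)m\rho$; the paper's iterated single-slot argument is closer in spirit to the proof of the first inequality but requires tracking the $W^{\Sigma_s}_{\alpha2}$ remainders. Your observation about why the naive symmetric split $\tfrac12[(\mu_\alpha-m\rho)^2+(\mu_\beta-m\rho)^2]$ would fail (an uncancelled $(\mu_2-m\rho)^2\approx m^2\rho^2$ term) correctly identifies the one pitfall. All auxiliary facts you invoke ($0=\mu_1\le\mu_2<m\rho$, $\suma(\mu_\alpha-m\rho)^2=m^2\rho^2$, total trace-freeness of $W^{\Sigma_s}$ in the tangential frame) are available at this point of the paper, so the proof goes through as written.
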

    \begin{proof}
    Since $\sumb W^{\Sigma_s}_{2\beta}=0$, we have
    \begin{equation*}
        \begin{aligned}
            \sumb W_{2\beta}^{\Sigma_s}\mu_\beta = & \sumb W_{2\beta}^{\Sigma_s}(\mu_\beta-m\rho)\\
            \leq& |W^{\Sigma_s}|\sqrt{\sumb(\mu_\beta-m\rho)^2}\\
            =&|W^{\Sigma_s}|\sqrt{\mu_2(2m\rho-\mu_2)}\\
            \leq&|W^{\Sigma_s}|\sqrt{2m\rho\mu_2}\\
            \leq&\frac1{2}(2m\rho\epsilon \mu_2+\frac1{\epsilon}|W^{\Sigma_s}|^2), 
        \end{aligned}
    \end{equation*}
    which is the first inequality in Lemma~\ref{ineqW}. 
    
    For fixed $\alpha\geq3$, we have $\sum_{\beta\not=\alpha} W^{\Sigma_s}_{\alpha\beta}=0$, so
    \begin{equation*}
        \begin{aligned}
            \sum_{\beta\geq3,\beta\not=\alpha} W_{\alpha\beta}^{\Sigma_s}\mu_\beta = & \sum_{\beta\geq3,\beta\not=\alpha} W_{\alpha\beta}^{\Sigma_s}(\mu_\beta-m\rho) - m\rho W_{\alpha2}^{\Sigma_s}\\
            \leq& |W^{\Sigma_s}|^2\sqrt{\sumb(\mu_\beta-m\rho)^2} - m\rho W_{\alpha2}^{\Sigma_s}\\
            \leq& \frac1{2}(2m\rho\epsilon \mu_2+\frac1{\epsilon}|W^{\Sigma_s}|^2) - m\rho W_{\alpha2}^{\Sigma_s}. 
        \end{aligned}
    \end{equation*}
    Hence
    \begin{equation*}
        \begin{aligned}
            &\suma\sum_{\beta\not=\alpha} W_{\alpha\beta}^{\Sigma_s}\mu_\alpha\mu_\beta \\
            =& 2\mu_2\sumb W_{2\beta}^{\Sigma_s}\mu_\beta + \sum_{\alpha=3}^n\mu_\alpha\sum_{\beta\geq3,\beta\not=\alpha} W_{\alpha\beta}^{\Sigma_s}\mu_\beta\\
            \leq& 2\mu_2\sumb W_{2\beta}^{\Sigma_s}\mu_\beta + \frac1{2}\sum_{\alpha=3}^n \mu_\alpha(2m\rho\epsilon \mu_2+\frac1{\epsilon}|W^{\Sigma_s}|^2) - m\rho\sum_{\alpha=3}^n W_{2\alpha}^{\Sigma_s}\mu_\alpha\\
            =&(2\mu_2-m\rho)\sumb W_{2\beta}^{\Sigma_s}\mu_\beta +  \frac1{2}\sumb \mu_\beta(2m\rho\epsilon \mu_2+\frac1{\epsilon}|W^{\Sigma_s}|^2)\\
            \leq& (\mu_2 +\frac1{2}m\rho + \frac1{2}\sumb\mu_\beta)(2m\rho\epsilon \mu_2+\frac1{\epsilon}|W^{\Sigma_s}|^2)\\
            \leq& \frac{n}{2}(2m\rho\epsilon \mu_2+\frac1{\epsilon}|W^{\Sigma_s}|^2). 
        \end{aligned}
    \end{equation*}
    \end{proof}

    Recall that $|\nabla \varphi |^2  = (\frac{m|\nabla u|}{u})^2 = m^2(\frac{B}{u^2}-K) = m^2(B e^\frac{2\varphi}{m} - K)$ turns to $+\infty$ as $\varphi\rightarrow +\infty$. Define $D(a) = \{\varphi\leq a\}$, then for sufficiently large $a$, $\frac1{|\nabla\varphi|^2}<\delta$ can be arbitrarily small in $M\backslash D(a)$. So \eqref{inNP2_1} together with \eqref{eqWab} implies:

    \begin{proposition} There exists a constant $C = C(n,m,\lambda,\epsilon)>0$, such that
        \begin{equation}\label{inNP2_2}
            |\nabla P|^2 \leq C\mu_2 + C|W^{\Sigma_s}|^2 + \frac{8}{n-3}m^2\rho^2(K_{12}-\rho),
        \end{equation}
        holds in $M\backslash D(a)$ for some sufficiently large $a$. 
    \end{proposition}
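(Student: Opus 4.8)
The plan is to read \eqref{inNP2_1} as an almost self-contained inequality for $|\nabla P|^2$ in terms of $\mu_2$, the Weyl cross-term, and $K_{12}-\rho$, whose only two defects are the self-referential term $\tfrac{C}{|\nabla\varphi|^2}|\nabla P|^2$ on the right and the extra factor $\tfrac1{|\nabla\varphi|^2}$ multiplying $\mu_2$. Both defects are cured by staying far from the critical set. As recorded just before the statement, $|\nabla\varphi|^2=m^2(Be^{2\varphi/m}-K)\to+\infty$ as $\varphi\to+\infty$, so \emph{after} fixing the constant $C_0$ of \eqref{inNP2_1} I would choose $a$ so large that $|\nabla\varphi|^{-2}\le\min\{\tfrac1{2C_0},1\}$ throughout $M\setminus D(a)$. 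On that set $\tfrac{C_0}{|\nabla\varphi|^2}|\nabla P|^2\le\tfrac12|\nabla P|^2$, so moving this term to the left of \eqref{inNP2_1} and multiplying through by $2$ gives, on $M\setminus D(a)$,
\begin{equation*}
  |\nabla P|^2 \le 4C_0\,\mu_2 + 8\sum_{2\le\alpha<\beta\le n}W_{\alpha\beta}^{\Sigma_s}\mu_\alpha\mu_\beta + \frac{8m^2\rho^2}{n-3}\,(K_{12}-\rho),
\end{equation*}
where I have also used $\mu_2(1+|\nabla\varphi|^{-2})\le 2\mu_2$. Note that the multiplication by $2$ is exactly what turns the coefficient $\tfrac{4m^2\rho^2}{n-3}$ of $K_{12}-\rho$ into the claimed $\tfrac{8m^2\rho^2}{n-3}$; since this curvature term is merely carried along and not estimated, no sign information on $K_{12}-\rho$ is needed.

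Next I would absorb the Weyl cross-term using Lemma~\ref{ineqW}. Since $W_{\alpha\beta}^{\Sigma_s}\mu_\alpha\mu_\beta$ is symmetric under $\alpha\leftrightarrow\beta$, one has $8\sum_{2\le\alpha<\beta\le n}W_{\alpha\beta}^{\Sigma_s}\mu_\alpha\mu_\beta=4\suma\sum_{\beta\ne\alpha}W_{\alpha\beta}^{\Sigma_s}\mu_\alpha\mu_\beta$, so \eqref{eqWab} gives
\begin{equation*}
  8\sum_{2\le\alpha<\beta\le n}W_{\alpha\beta}^{\Sigma_s}\mu_\alpha\mu_\beta \le 4(n-1)m\rho\Bigl(2m\rho\,\epsilon\,\mu_2+\tfrac1\epsilon\,|W^{\Sigma_s}|^2\Bigr).
\end{equation*}
Substituting this into the previous display and setting $C:=\max\{\,4C_0+8(n-1)m^2\rho^2\epsilon,\ 4(n-1)m\rho/\epsilon\,\}$ — which depends only on $n$, $m$, $\epsilon$ and on $\rho=\rho(n,m,\lambda)$ from \eqref{rho}, hence only on $n,m,\lambda,\epsilon$ — yields \eqref{inNP2_2} on $M\setminus D(a)$.

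I do not expect a substantive obstacle: the proposition is a bookkeeping consequence of \eqref{inNP2_1} and \eqref{eqWab}. The three points that need care are (i) the order of the quantifiers, so that $a$ is chosen after $C_0$ and the self-referential term is genuinely absorbable; (ii) using the crude bound $|\nabla\varphi|^{-2}\le\tfrac1{2C_0}$ and then doubling the whole inequality, which is what pins down the precise constant $\tfrac{8}{n-3}m^2\rho^2$ in front of $K_{12}-\rho$; and (iii) verifying that the accumulated constants collapse to a dependence on $n,m,\lambda,\epsilon$ alone.
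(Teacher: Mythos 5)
Your proposal is correct and fills in exactly the bookkeeping that the paper leaves implicit — the paper merely says ``\eqref{inNP2_1} together with \eqref{eqWab} implies'' the proposition, and your absorption of the $\tfrac{C_0}{|\nabla\varphi|^2}|\nabla P|^2$ term (after choosing $a$ large so that $|\nabla\varphi|^{-2}\le\min\{\tfrac1{2C_0},1\}$), followed by the symmetrization $8\sum_{\alpha<\beta}=4\sum_{\alpha\ne\beta}$ and the application of \eqref{eqWab}, is precisely that argument. In particular your observation that the doubling step is what produces the coefficient $\tfrac{8}{n-3}m^2\rho^2$ is correct, and the resulting constant indeed depends only on $n,m,\lambda,\epsilon$ since $C_0=C_0(n,m,\rho)$ and $\rho=\rho(n,m,\lambda)$.
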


\section{Uniform decay on $\mu_1+\mu_2$}\label{sec5}

    Further estimate on the Weyl tensor of level sets in this paper needs 4-dimensional Gauss-Bonnet-Chern formula, so now take $n=5$. 

    \begin{lemma}
        Let $(M^5, g, u, \lambda)$ be a nontrivial simply-connected compact 5-dimensional $m$-quasi-Einstein manifold with boundary and constant $R=\frac{3m+5}{m+1} \lambda$, $m>1$. Then
        \begin{equation}\label{inintW}
            \int_{{\Sigma_s}}|W^{{\Sigma_s}}|^2 d\sigma = 2\int_{{\Sigma_s}}\frac{|\nabla_{\nabla\varphi}P|^2}{|\nabla\varphi|^4}d\sigma\leq 2\int_{{\Sigma_s}}\frac{|\nabla P|^2}{|\nabla\varphi|^2}d\sigma, 
        \end{equation}
        where ${\Sigma_s}$ is level set $\varphi^{-1}(s)$ for some regular value $s$. 
    \end{lemma}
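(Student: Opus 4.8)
The plan is to apply the $4$-dimensional Gauss-Bonnet-Chern formula to the closed level hypersurface $\Sigma_s$; the one nontrivial ingredient will be the topological fact $\chi(\Sigma_s)=0$, after which the identity is forced by \eqref{eqsc}, \eqref{trace2} and \eqref{square2}.

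First I would pin down the topology of $\Sigma_s$. Since $u$ is isoparametric (Remark~\ref{rmk_on_lem}), the critical set $N$ is a closed minimal submanifold, and $\dim N=k_0=n-2=3$ --- the value $R=\frac{3m+5}{m+1}\lambda$ forces $k=3$ in \eqref{Csc}. Every regular level set $\Sigma_s$ is then diffeomorphic to the unit normal sphere bundle of $N$, i.e.\ an $\mathbb{S}^1$-bundle over $N^3$, so by multiplicativity of the Euler characteristic for fibre bundles $\chi(\Sigma_s)=\chi(\mathbb{S}^1)\,\chi(N)=0$. Moreover $\Sigma_s$, being a two-sided hypersurface in the orientable $M$, is orientable, and it is closed for every finite regular value $s$ since $\varphi\to+\infty$ on $\partial M$. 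Hence the Gauss-Bonnet-Chern formula gives
\[
0=\chi(\Sigma_s)=\frac{1}{32\pi^2}\int_{\Sigma_s}\Big(|W^{\Sigma_s}|^2-2\,|\mathring{\Ric}^{\Sigma_s}|^2+\tfrac16\,(R^{\Sigma_s})^2\Big)\,d\sigma,\qquad \mathring{\Ric}^{\Sigma_s}:=\Ric^{\Sigma_s}-\tfrac14 R^{\Sigma_s}g^{\Sigma_s}.
\]

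Next I would compute the curvature data of $\Sigma_s$. By \eqref{eqsc}, $R^{\Sigma_s}=3(m+1)\rho$, so $\tfrac16(R^{\Sigma_s})^2=\tfrac32(m+1)^2\rho^2$. Combining \eqref{eqric} with the off-diagonal Gauss identity --- in the chosen eigenframe both $\Ric$ and the second fundamental form are diagonal, so for $\alpha\neq\beta$ one has $R^{\Sigma_s}_{\alpha\beta}=-R_{1\alpha1\beta}=-|\nabla\varphi|^{-2}\nabla_{\nabla\varphi}P_{\alpha\beta}$ by \eqref{eqrm} and \eqref{eqR1a1b} --- yields, as symmetric $2$-tensors on $\Sigma_s$,
\[
\Ric^{\Sigma_s}=\tfrac{m+1}{m}\,P-\tfrac{1}{|\nabla\varphi|^2}\,\nabla_{\nabla\varphi}P,\qquad \mathring{\Ric}^{\Sigma_s}=\tfrac{m+1}{m}\,\mathring P-\tfrac{1}{|\nabla\varphi|^2}\,\nabla_{\nabla\varphi}P,
\]
where $\mathring P$ denotes the trace-free part of $P|_{T\Sigma_s}$. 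This uses that $\nabla_{\nabla\varphi}P$ is already trace-free along $\Sigma_s$, which holds because $\tr P$ is constant and every $e_1$-component of $\nabla_{\nabla\varphi}P$ vanishes --- the $\varphi$-flow lines are geodesics, so $\nabla_{\nabla\varphi}e_1=0$, and $P(e_1,\cdot)=0$ by \eqref{Ricnu=}. Squaring, the cross term $\langle\mathring P,\nabla_{\nabla\varphi}P\rangle$ vanishes pointwise since $\langle P,\nabla_{\nabla\varphi}P\rangle=\tfrac12\nabla_{\nabla\varphi}|P|^2=0$ and $\tr_{\Sigma_s}(\nabla_{\nabla\varphi}P)=0$, while $|\mathring P|^2=|P|^2-\tfrac14(\tr P)^2=3m^2\rho^2-\tfrac94 m^2\rho^2=\tfrac34 m^2\rho^2$ by \eqref{trace2} and \eqref{square2}. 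Hence $2\,|\mathring{\Ric}^{\Sigma_s}|^2=\tfrac32(m+1)^2\rho^2+\tfrac{2}{|\nabla\varphi|^4}|\nabla_{\nabla\varphi}P|^2$, the last norm agreeing with the ambient one since its $e_1$-components vanish.

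Substituting these into the Gauss-Bonnet-Chern identity, the two $\tfrac32(m+1)^2\rho^2$ contributions cancel and we are left with $\int_{\Sigma_s}\big(|W^{\Sigma_s}|^2-2|\nabla_{\nabla\varphi}P|^2/|\nabla\varphi|^4\big)\,d\sigma=0$, which is the asserted equality. The remaining inequality is Cauchy-Schwarz, $|\nabla_{\nabla\varphi}P|^2\leq|\nabla\varphi|^2\,|\nabla P|^2$, divided by $|\nabla\varphi|^4$ and integrated over $\Sigma_s$. I expect the main obstacle to be the topological step $\chi(\Sigma_s)=0$ --- the identification of $\Sigma_s$ with the unit normal sphere bundle of $N$ --- together with the bookkeeping that fixes the off-diagonal Ricci of $\Sigma_s$ and the vanishing of $\tr_{\Sigma_s}(\nabla_{\nabla\varphi}P)$ and of the cross term; once these are in place everything is forced by \eqref{eqsc}, \eqref{trace2} and \eqref{square2}.
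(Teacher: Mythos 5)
Your proof is correct and follows essentially the same route as the paper: the four-dimensional Gauss--Bonnet--Chern formula on $\Sigma_s$ with $\chi(\Sigma_s)=0$, combined with \eqref{eqric}, \eqref{eqR1a1b}, \eqref{eqsc}, \eqref{trace2}, \eqref{square2} and Cauchy--Schwarz. The only cosmetic differences are that you get $\chi(\Sigma_s)=0$ from multiplicativity of the Euler characteristic for the $\mathbb{S}^1$-bundle $\Sigma_s\to N$ rather than the paper's identification $\Sigma_s\cong\mathbb{S}^1\times\mathbb{S}^3$, and you organize the algebra via the trace-free Ricci decomposition, making explicit the vanishing of the $e_1$-components of $\nabla_{\nabla\varphi}P$ that the paper uses implicitly.
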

    \begin{proof}
        Recall that for a closed oriented 4-dimensional Riemannian manifold $(\Omega^4,g)$, Gauss-Bonnet-Chern formula implies that
        \begin{equation*}
            \chi(\Omega) =\frac1{4\pi^2}\int_\Omega (\frac1{8}|W_g|^2-\frac1{4}|\Ric_g|^2 +\frac1{12}(R_g)^2) dV. 
        \end{equation*}

        The critical set of $u$ is $N^3$, so as the deformation retraction of $M$, $N$ is also simly-connected. Therefore, $N$ is diffeomorphism to $\mathbb{S}^3$, and the regular level set $\Sigma_s$, as tubular neighborhood of $N$, is diffeomorphism to $\mathbb{S}^1\times \mathbb{S}^3$ (see \cite{costa2024rigiditycompactquasieinsteinmanifolds}), and $\chi({\Sigma_s})=0$. So using \eqref{eqR1a1b}, \eqref{eqsc} and \eqref{eqric} one gets

        \begin{equation*}
        \begin{aligned}
            \int_{\Sigma_s}|W^{\Sigma_s}|^2d\sigma = & \int_{\Sigma_s} \Big(2|\Ric^{\Sigma_s}|^2-\frac2{3}(R^{\Sigma_s})^2\Big)d\sigma\\
            = & \int_{\Sigma_s} \Big(2\suma\Big((1+\frac1{m})\mu_\alpha-\frac{\nabla_{\nabla\varphi}P_{\alpha\alpha}}{|\nabla\varphi|^2}\Big)^2  \\
            &\qquad + 2\sum_{\alpha\not=\beta}\Big(\frac{\nabla_{\nabla\varphi}P_{\alpha\beta}}{|\nabla\varphi|^2}\Big)^2-6(m+1)^2\rho^2 \Big)d\sigma\\
            =& \int_{\Sigma_s} \Big(2(1+\frac1{m})^2\suma \mu_\alpha^2+2\frac{|\nabla_{\nabla\varphi} P|^2}{|\nabla\varphi|^4} - 6(m+1)^2\rho^2 \Big)d\sigma\\
            =& 2\int_{{\Sigma_s}}\frac{|\nabla_{\nabla\varphi}P|^2}{|\nabla\varphi|^4}d\sigma. 
        \end{aligned}
        \end{equation*}
    \end{proof}

    This lemma combined with the estimate of $|\nabla P|^2$ implies $|W^{\Sigma_s}|^2$ turns to 0 near $\partial M$, which gives more information on $\partial M$. 

    \begin{proposition}
        Let $(M^5, g, u, \lambda)$ be a nontrivial simply-connected compact 5-dimensional $m$-quasi-Einstein manifold with boundary and constant $R=\frac{3m+5}{m+1} \lambda$, $m>1$. Then
        \begin{equation}
            \mu_1+\mu_2 = 0
        \end{equation}
        holds on $\partial M$. 
    \end{proposition}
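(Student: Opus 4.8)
The plan is to pass to the limit $s\to\infty$ (i.e.\ approach $\partial M$) in the \emph{exact} identity \eqref{np2} for $|\nabla P|^2$, after first showing that the Weyl tensor of the level sets decays. Since $g$ is smooth on the compact manifold $M$, $|\nabla P|$ is bounded, say $|\nabla P|\le\Lambda_0$; and $|\nabla\varphi|$ is constant on each $\Sigma_s$ with $|\nabla\varphi|^2=m^2(Be^{2\varphi/m}-K)\to\infty$ as $\varphi\to\infty$, while the regular level sets $\Sigma_s$ converge smoothly to the regular level set $\partial M=\{u=0\}$ of the isoparametric function $u$, so $|\Sigma_s|$ stays bounded. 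Hence \eqref{inintW} gives
\[
 \int_{\Sigma_s}|W^{\Sigma_s}|^2\,d\sigma \;\le\; 2\int_{\Sigma_s}\frac{|\nabla P|^2}{|\nabla\varphi|^2}\,d\sigma \;\le\; \frac{2\Lambda_0^2\,|\Sigma_s|}{|\nabla\varphi(s)|^2}\;\longrightarrow\;0 .
\]
Because $p\mapsto|W^{\Sigma_{\varphi(p)}}|^2(p)$ is a bounded function on $M\setminus N$ extending continuously to $|W^{\partial M}|^2$ on $\partial M$, this forces $\int_{\partial M}|W^{\partial M}|^2\,d\sigma=0$, hence $W^{\partial M}\equiv 0$ and $W^{\Sigma_s}\to 0$ on approach to $\partial M$; in particular every component $W^{\Sigma_s}_{\alpha\beta}\to 0$ there.

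Next I take $s\to\infty$ in \eqref{np2}. The bound $|\nabla P|\le\Lambda_0$ and $|\nabla\varphi|\to\infty$ force $\nabla_{\nabla\varphi}P_{\alpha\alpha}/|\nabla\varphi|^2\to 0$ and $(\mu_\alpha-m\rho)(\mu_\beta-m\rho)/|\nabla\varphi|^2\to 0$ on approach to $\partial M$; feeding these and the Weyl decay into \eqref{Kab} shows that, for $2\le\alpha\ne\beta\le n$, the sectional curvature $K_{\alpha\beta}$ tends to $\tfrac1{n-3}(1+\tfrac1m)(\mu_\alpha+\mu_\beta-m\rho)$ on $\partial M$. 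Substituting this limiting value into \eqref{np2}, using $\suma\mu_\alpha=k_0m\rho$ and $\suma\mu_\alpha^2=k_0m^2\rho^2$ (so that $\sum_{\alpha\ne\beta}(\mu_\alpha+\mu_\beta-m\rho)\mu_\alpha\mu_\beta=-2\suma\mu_\alpha(\mu_\alpha-m\rho)^2+k_0(k_0-1)m^3\rho^3$), and noting that $k_0=n-2$ makes the constant terms cancel, one is left with
\[
 |\nabla P|^2 \;=\; -\,\frac{4(m+1)}{m(n-3)}\,\suma\mu_\alpha(\mu_\alpha-m\rho)^2 \qquad\text{on }\partial M .
\]

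Now the left-hand side is $\ge 0$ while, since each $\mu_\alpha\ge 0$, the right-hand side is $\le 0$; hence both vanish identically on $\partial M$. Thus $\suma\mu_\alpha(\mu_\alpha-m\rho)^2=0$ there, so at every point of $\partial M$ each $\mu_\alpha$ equals $0$ or $m\rho$; as there are $n-1$ of them with $\suma\mu_\alpha=(n-2)m\rho$, exactly $n-2$ of them equal $m\rho$ and exactly one vanishes, and by the ordering $0\le\mu_2\le\cdots\le\mu_n$ the vanishing one is $\mu_2$. Therefore $\mu_1+\mu_2=0$ on $\partial M$.

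The only point requiring care is the passage to the limit: one must verify that $|\nabla P|^2$, the eigenvalues $\mu_\alpha$, the sectional curvatures $K_{\alpha\beta}$ and $|W^{\Sigma_s}|^2$ all extend continuously up to $\partial M$ — this follows from smoothness of $g$ on $\overline{M}$, together with $|\nabla\varphi|\to\infty$ for the $|\nabla\varphi|^{-2}$–weighted terms (which then contribute $0$ in the limit) — and that $\Sigma_s\to\partial M$ in $C^\infty$. The genuinely substantive ingredient is the Weyl decay of the first step, which is precisely where five–dimensionality enters, through the Gauss–Bonnet–Chern computation underlying \eqref{inintW}.
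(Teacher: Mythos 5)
Your proof is correct, but it settles the statement by a genuinely different route than the paper, so a comparison is in order. You share with the paper the Gauss--Bonnet--Chern input \eqref{inintW} and the conclusion that the level-set Weyl tensor decays near $\partial M$, but you obtain that decay more cheaply: since $g$ and $u$ are smooth up to the boundary of the compact $M$, $|\nabla P|$ is globally bounded, the areas of $\Sigma_s$ stay bounded, and $|\nabla\varphi|^2\to\infty$, so \eqref{inintW} alone gives $\int_{\Sigma_s}|W^{\Sigma_s}|^2\,d\sigma\to0$, whereas the paper first closes a self-improving inequality through \eqref{inNP2_2} and a bound on $K_{12}-\rho$ to reach $\int_{\Sigma_s}|W^{\Sigma_s}|^2\,d\sigma\le C/|\nabla\varphi|^2$. (Two points of hygiene: the boundedness and continuous extension of $p\mapsto|W^{\Sigma_{\varphi(p)}}|$ is only needed, and only clear, in a collar of $\partial M$ --- near the critical set $N$ the level sets degenerate --- and the componentwise limit of $K_{\alpha\beta}$ from \eqref{Kab} is cleanest stated for the basis-independent quantity $\sum_{\alpha\neq\beta}K_{\alpha\beta}\mu_\alpha\mu_\beta$, since the eigenframe of $P$ need not converge; the bounds you invoke are basis-independent, so nothing breaks.) After the Weyl decay the two arguments diverge: the paper combines $W^{\partial M}=0$ with $\Ric^{\partial M}\ge0$ (from \eqref{eqric}) and the fact that $\partial M$ is diffeomorphic to $\mathbb{S}^1\times\mathbb{S}^3$, and then invokes the classification of locally conformally flat manifolds with nonnegative Ricci curvature \cites{LCF,LCF2006} to conclude $\partial M$ is locally a round cylinder, so the smallest eigenvalue $(1+\tfrac1m)\mu_2$ of $\Ric^{\partial M}$ vanishes; you instead pass to the limit in the exact identity \eqref{np2} via \eqref{Kab}, where the $k_0=n-2$ cancellation (the same one underlying the paper's estimate of the term $I$ in Section 4) leaves $|\nabla P|^2=-\tfrac{4(m+1)}{m(n-3)}\suma\mu_\alpha(\mu_\alpha-m\rho)^2$ on $\partial M$, and a sign comparison forces both sides to vanish, hence $\mu_2=0$. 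Your route avoids the external classification theorems and the non-flatness discussion entirely (topology enters only through $\chi(\Sigma_s)=0$ in \eqref{inintW}) and delivers strictly more boundary information, namely $\nabla\Ric=0$ and $\mu_\alpha\in\{0,m\rho\}$ on $\partial M$; the paper's route relies only on the integral Weyl decay plus boundary topology and is correspondingly less dependent on pointwise limiting arguments.
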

    \begin{proof}
    The equation \eqref{eqK1a} for $K_{12}$ shows
        \begin{equation*}
            \begin{aligned}
                K_{12}-\rho \leq & \frac1{4} + (K_{12}-\rho)^2\\
                = & \frac1{4} + \Big(\frac{\nabla_{\nabla\varphi}P_{22}}{|\nabla\varphi|^2}-\frac{\mu_2(\mu_2-m\rho)}{|\nabla\varphi|^2}-\frac{\mu_2}{m}\Big)\\
                \leq & \frac1{4} + 3\Big(\frac{|\nabla P|^2}{|\nabla\varphi|^2}+\frac{\mu_2^2(\mu_2-m\rho)^2}{|\nabla\varphi|^4}+\frac{\mu_2^2}{m^2}\Big)\\
                \leq & C\Big(\frac{|\nabla P|^2}{|\nabla\varphi|^2}+\frac1{|\nabla\varphi|^4}+1\Big),
            \end{aligned}
        \end{equation*}
        for some $C>0$. Therefore by \eqref{inNP2_2} there exsits $C>0$, such that
        \begin{equation*}
        \begin{aligned}
            |\nabla P|^2 \leq & C(\mu_2+|W^{\Sigma_s}|^2)+C\Big(\frac{|\nabla P|^2}{|\nabla\varphi|^2}+\frac1{|\nabla\varphi|^4}+1\Big)\\
            \leq & \frac{C}{|\nabla\varphi|^2}|\nabla P|^2 + C\big(|W^{\Sigma_s}|^2+1\big)
        \end{aligned}
        \end{equation*}
        holds in $M\backslash D(a)$ for some sufficiently large $a$. So there exists some $C=C(m,n,\lambda,\epsilon)>0$ such that $|\nabla P|^2\leq C(|W^{\Sigma_s}|^2+1)$ holds in $M\backslash D(a)$ for some sufficiently large $a$. 
        
        Insert this into the inequality \eqref{inintW} to get
        \begin{equation}
            \int_{\Sigma_s}|W^{\Sigma_s}|^2\Big(1-\frac{C}{|\nabla\varphi|^2}\Big)d\sigma \leq \int_{\Sigma_s}\frac{C}{|\nabla\varphi|^2}d\sigma. 
        \end{equation}
        Hence $\int_{\Sigma_s}|W^{\Sigma_s}|^2d\sigma \leq \frac{C}{|\nabla\varphi|^2}$ holds for some $C>0$ and level set ${\Sigma_s}$ of sufficiently large $s$. In particular, the Weyl tensor of the boundary $W^{\partial M} = W^{\Sigma_{+\infty}}=0$, that is, $\partial M$ is locally conformally flat. 

        Check expression for Ricci tensor on level set \eqref{eqric} and notice that $\frac{\nabla_{\nabla\varphi}P_{\alpha\beta}}{|\nabla\varphi|^2}\rightarrow0$ uniformly near $\partial M$, one gets $\Ric^{\partial M}$ has eigenvalues $(1+\frac1{m})\mu_\alpha,\ \alpha\in\{2,3,4,5\}$, which means, $\Ric^{\partial M}\geq0$. 

        Thanks to the classificatioin of complete locally conformally flat manifolds with nonnegative Ricci curvature by Zhu \cite{LCF} and by Carron-Herzlich \cite{LCF2006}, the compact closed manifold $\partial M$ must be one of the following:\\
        (i). globally conformally equivalent to $\mathbb{R}^4$ with non-flat metric;\\
        (ii). globally conformally equivalent to a spaceform of positive curvature;\\
        (iii). locally isometric to the cylinder $\bR \times \mathbb{S}^3$. \\
        (iv). locally isometric to a flat manifold;\\
        $\partial M$ is diffeomorphism to $\mathbb{S}^1\times \mathbb{S}^3$ with non-flat metric, so only (iii) is possible, therefore the minimal eigenvalue of $\Ric^{\partial M}$, that is, $(1+\frac1{m})\mu_2$ must be 0. So we have $\mu_1+\mu_2=0$ on $\partial M$. 
    \end{proof}

\section{Proof of Theorem~\ref{THM}}\label{sec6}

    In this section we will give the proof of Theorem~\ref{THM}. Firstly we need modify the operator $L_{m+2}$ to involve all non-decay one-order entries. In $M\backslash N$, take
    \begin{equation}
    \begin{aligned}\label{h}
        h =& -(m+2)\log u + \frac{3m\rho}{2mK}\log(B-Ku^2) - \\
        &16m^2\rho^2\Big(\frac1{2m^3K^2}\log(B-Ku^2)+\frac{B}{2m^3K^2}\frac1{B-Ku^2}\Big), 
    \end{aligned}
    \end{equation}
    where $|\nabla u|^2 = B-Ku^2$. Then $h$ is constant on each level set of $u$. 

    Since 
    \begin{equation*}
    \begin{aligned}
        \nabla\Big(\frac{1}{2mK}\log(B-Ku^2)\Big) = & -\frac{u}{m|\nabla u|^2}\nabla u\\
        = & \frac{1}{|\nabla\varphi|^2}\nabla\varphi,
    \end{aligned}
    \end{equation*}
    and 
    \begin{equation*}
    \begin{aligned}
        &\nabla\Big(\frac1{2m^3K^2}\log(B-Ku^2)+\frac{B}{2m^3K^2}\frac1{B-Ku^2}\Big)\\
        = & \Big(-\frac1{m^3K}\frac{u}{B-Ku^2} +\frac{B}{m^3K}\frac{u}{(B-Ku^2)^2}\Big)\nabla u \\
        = & \Big(\frac{1}{m^3}\frac{u^3}{(B-Ku^2)^2}\Big)\nabla u\\
        = & - \frac1{|\nabla\varphi|^4}\nabla\varphi
    \end{aligned}
    \end{equation*}
    we have
    \begin{equation}
    \begin{aligned}
        \Delta_h=&e^h\divsymb(e^{-h}\nabla) = \Delta - <\nabla h, \nabla>\\
        =& L_{m+2} - \frac{3m\rho}{|\nabla\varphi|^2}<\nabla\varphi,\nabla>-\frac{16m^2\rho^2}{|\nabla\varphi|^4}<\nabla\varphi,\nabla>. 
    \end{aligned}
    \end{equation}

    \begin{proposition}\label{IneqR}
         Let $(M^5, g, u, \lambda)$ be a nontrivial simply-connected compact 5-dimensional $m$-quasi-Einstein manifold with boundary and constant $R=\frac{3m+5}{m+1} \lambda$, $m>1$. Then for any sufficiently large $a$, 
         \begin{equation}\label{ineqPro6.1}
             \int_{M\backslash D(a)} \Delta_h(\mu_1+\mu_2)e^{-h}dV\leq\int_{M\backslash D(a)} -\frac{\rho}{2}(\mu_1+\mu_2)e^{-h}dV, 
         \end{equation}
         where $h$ is defined as $\eqref{h}$. 
    \end{proposition}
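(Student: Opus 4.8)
The plan is to feed the differential inequality \eqref{Lmu2}, specialised to $n=5$ (so $k_0=3$ and $n-3=2$), into the decomposition $\Delta_h=L_{m+2}-\bigl(\tfrac{3m\rho}{|\nabla\varphi|^2}+\tfrac{16m^2\rho^2}{|\nabla\varphi|^4}\bigr)\langle\nabla\varphi,\nabla\,\cdot\,\rangle$ recorded above. Everything is read in the barrier/distributional sense at a point $p$ where $P$ is diagonalised with $e_1=\nabla\varphi/|\nabla\varphi|$; there $\mu_1=0$, $\mu_1+\mu_2=P_{22}$, $\nabla_{\nabla\varphi}(\mu_1+\mu_2)=\nabla_{\nabla\varphi}P_{22}$ and $\nabla_{\nabla\varphi}(P_{22}^2)=2\mu_2\,\nabla_{\nabla\varphi}P_{22}$ (the identity $P(\nabla\varphi,\cdot)=0$ makes the $P_{11}$-contributions drop out at $p$). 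The coefficient $3m\rho$ in \eqref{h} is rigged precisely so that the drift $-\tfrac{3m\rho}{|\nabla\varphi|^2}\nabla_{\nabla\varphi}P_{22}$ produced by $\Delta_h$ cancels the term $+\tfrac{3m\rho}{|\nabla\varphi|^2}\nabla_{\nabla\varphi}P_{22}$ hidden inside the last bracket of \eqref{Lmu2}. After this cancellation one is left with $\Delta_h(\mu_1+\mu_2)\le-\tfrac{2(m+1)}{m}\mu_2(m\rho-\mu_2)+\tfrac{2}{|\nabla\varphi|^2}\sumb\mu_\beta(\mu_\beta-m\rho)^2-2\sumb W_{2\beta}^{\Sigma_s}\mu_\beta-\tfrac{2\mu_2}{|\nabla\varphi|^2}\nabla_{\nabla\varphi}P_{22}-\tfrac{16m^2\rho^2}{|\nabla\varphi|^4}\nabla_{\nabla\varphi}P_{22}$.

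I would then estimate the right-hand side term by term. Since $\tr P=3m\rho$ and $\mu_1=0$ force $0\le\mu_2\le\tfrac{3m\rho}{4}<m\rho$, we get $m\rho-\mu_2\ge\tfrac{m\rho}{4}$, so the main term is $\le-\tfrac{(m+1)\rho}{2}\mu_2=-\tfrac{\rho}{2}(\mu_1+\mu_2)-\tfrac{m\rho}{2}\mu_2$; the surplus $-\tfrac{m\rho}{2}\mu_2$ is the budget for the errors. One has $\tfrac{2}{|\nabla\varphi|^2}\sumb\mu_\beta(\mu_\beta-m\rho)^2\le\tfrac{12m^2\rho^2}{|\nabla\varphi|^2}\mu_2$ by the final lemma of Section~\ref{sec3}. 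For the two residual first-order terms I would substitute $\tfrac{\nabla_{\nabla\varphi}P_{22}}{|\nabla\varphi|^2}=K_{12}-\rho+\tfrac{\mu_2}{m}+\tfrac{\mu_2(\mu_2-m\rho)}{|\nabla\varphi|^2}$ from \eqref{eqK1a} (taken at $\alpha=2$), together with $|K_{12}-\rho|\le\tfrac{|\nabla P|}{|\nabla\varphi|}+\tfrac{\mu_2}{m}+\tfrac{m\rho\,\mu_2}{|\nabla\varphi|^2}$ read off the same identity: the $\tfrac{\mu_2}{m}$-contributions cancel against a manifestly non-positive $-\tfrac{2\mu_2^2}{m}$, the remaining $\mu_2(\mu_2-m\rho)/|\nabla\varphi|^2$-pieces are $\le C\mu_2/|\nabla\varphi|^2$, and what survives is $\le C\,\tfrac{\mu_2|\nabla P|}{|\nabla\varphi|}+C\,\tfrac{|\nabla P|}{|\nabla\varphi|^3}$, which after Young's inequality and the Section~\ref{sec5} bound $|\nabla P|^2\le C(|W^{\Sigma_s}|^2+1)$ (valid on $M\backslash D(a)$) becomes $\le C|W^{\Sigma_s}|^2+C\mu_2/|\nabla\varphi|^2+C/|\nabla\varphi|^2$. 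Finally $-2\sumb W_{2\beta}^{\Sigma_s}\mu_\beta\le2m\rho\,\epsilon\,\mu_2+\tfrac1\epsilon|W^{\Sigma_s}|^2$ by Lemma~\ref{ineqW}. Because $|\nabla\varphi|^{-2}\to0$ along the foliation, choosing $\epsilon$ small and then $a$ large lets every $|\nabla\varphi|^{-2}$-weighted $\mu_2$-term and the $2m\rho\epsilon\mu_2$-term fit into the surplus, so that pointwise on $M\backslash D(a)$ one gets $\Delta_h(\mu_1+\mu_2)\le-\tfrac{\rho}{2}(\mu_1+\mu_2)-\tfrac{m\rho}{4}\mu_2+C_\epsilon|W^{\Sigma_s}|^2+C/|\nabla\varphi|^2$.

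It remains to integrate this against $e^{-h}\,dV$ over $M\backslash D(a)$ and to show that $\int_{M\backslash D(a)}(C_\epsilon|W^{\Sigma_s}|^2+C/|\nabla\varphi|^2)e^{-h}\,dV$ is absorbed by the surplus $\tfrac{m\rho}{4}\int_{M\backslash D(a)}\mu_2\,e^{-h}\,dV$. Here the Gauss--Bonnet--Chern identity \eqref{inintW} enters: since $h$ and $|\nabla\varphi|$ are constant on regular level sets, the coarea formula and \eqref{inintW} give $\int_{M\backslash D(a)}|W^{\Sigma_s}|^2e^{-h}\,dV=2\int_{M\backslash D(a)}\tfrac{|\nabla_{\nabla\varphi}P|^2}{|\nabla\varphi|^4}e^{-h}\,dV\le 2\sup_{M\backslash D(a)}|\nabla\varphi|^{-2}\cdot\int_{M\backslash D(a)}\tfrac{|\nabla P|^2}{|\nabla\varphi|^2}e^{-h}\,dV$; inserting $|\nabla P|^2\le C(|W^{\Sigma_s}|^2+1)$ and absorbing the resulting $|W^{\Sigma_s}|^2$-integral back on the left shows $\int_{M\backslash D(a)}|W^{\Sigma_s}|^2e^{-h}\,dV\le C\sup_{M\backslash D(a)}|\nabla\varphi|^{-2}\bigl(\int_{M\backslash D(a)}|\nabla\varphi|^{-2}e^{-h}\,dV+\int_{M\backslash D(a)}\mu_2e^{-h}\,dV\bigr)$, which (because $e^{-h}|\nabla\varphi|^{-3}$ is integrable along the flow-tube) is $o(1)$ times the surplus as $a\to\infty$. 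If $\mu_2\equiv0$ on $M\backslash D(a)$ both sides of \eqref{ineqPro6.1} vanish; otherwise the Hopf boundary lemma applied to the superharmonic function $\mu_2$ forces $\mu_2\gtrsim1/|\nabla\varphi|$ near $\partial M$, whence $\int_{M\backslash D(a)}\mu_2e^{-h}\,dV\gtrsim\int_{M\backslash D(a)}|\nabla\varphi|^{-1}e^{-h}\,dV$ dominates the remaining Weyl/volume error for $a$ large, and \eqref{ineqPro6.1} follows.

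The genuine obstacle is exactly this last step: the level-set Weyl curvature is not pointwise small (only $\int_{\Sigma_s}|W^{\Sigma_s}|^2\,d\sigma=O(|\nabla\varphi|^{-2})$), so the estimate cannot be kept pointwise; one must pass to the integral statement and run the self-improving bound on $\int_{M\backslash D(a)}|W^{\Sigma_s}|^2e^{-h}\,dV$ via \eqref{inintW} and the $|\nabla P|^2$ estimate of Sections~\ref{sec4}--\ref{sec5}, all while tracking the weight $e^{-h}$ and the coarea factors carefully so that the $|\nabla\varphi|\to\infty$ decay renders the whole Weyl contribution subordinate to the negative term coming from $-\tfrac{2(m+1)}{m}\mu_2(m\rho-\mu_2)$.
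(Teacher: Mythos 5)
Your first two steps track the paper: you use the $3m\rho$ piece of $h$ to cancel the drift $\tfrac{3m\rho}{|\nabla\varphi|^2}\nabla_{\nabla\varphi}P_{22}$ hidden in \eqref{Lmu2}, and you control $-2\sumb W^{\Sigma_s}_{2\beta}\mu_\beta$ by Lemma~\ref{ineqW}, exactly as in the paper's \eqref{Pp61-1}. The gap is in how you dispose of the remaining $16m^2\rho^2|\nabla\varphi|^{-4}\nabla_{\nabla\varphi}P_{22}$ drift and of the leftover $|W^{\Sigma_s}|^2$. By expanding the drift pointwise via \eqref{eqK1a} and applying Young's inequality together with $|\nabla P|^2\le C(|W^{\Sigma_s}|^2+1)$, you generate additive errors of the form $C/|\nabla\varphi|^2$ and, after the ``self-improving'' integral bound, a term of size $C\sup_{M\backslash D(a)}|\nabla\varphi|^{-2}\int_{M\backslash D(a)}|\nabla\varphi|^{-2}e^{-h}dV$ --- none of which is a multiple of $\mu_1+\mu_2$, so they cannot be swallowed by $-\tfrac{\rho}{2}(\mu_1+\mu_2)$. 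Your fix, a Hopf-boundary-lemma lower bound $\mu_2\gtrsim|\nabla\varphi|^{-1}$ near $\partial M$, does not work: the differential inequality for $\mu_2$ holds only in the barrier/distributional sense and its right-hand side contains $|W^{\Sigma_s}|^2$, which is \emph{not} pointwise controlled (only $\int_{\Sigma_s}|W^{\Sigma_s}|^2\,d\sigma=O(|\nabla\varphi|^{-2})$), so $\mu_2$ is not a clean supersolution; the dichotomy ``$\mu_2\equiv0$ on $M\backslash D(a)$ or $\mu_2>0$ there'' is unjustified; and a quantitative, uniform rate $\gtrsim|\nabla\varphi|^{-1}$ is not what Hopf's lemma gives. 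It is also in tension with the paper's eventual conclusion that $\mu_2\equiv0$ outside a finite level set.

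The paper's route avoids all of this by never estimating the second drift pointwise. One integrates \eqref{Pp61-1} over each level set $\Sigma_s$; the Gauss--Bonnet identity \eqref{inintW} combined with \eqref{inNP2_2} and the identity \eqref{eqK1a} for $K_{12}$ gives
\begin{equation*}
\int_{\Sigma_s}|W^{\Sigma_s}|^2\,d\sigma\;\le\;\int_{\Sigma_s}\Big(\frac{C'\mu_2}{|\nabla\varphi|^2}+\frac{16m^2\rho^2\,\nabla\varphi\cdot\nabla\mu_2}{|\nabla\varphi|^4}\Big)d\sigma,
\end{equation*}
and the second summand is \emph{exactly} the extra drift built into $h$: after replacing $L_{m+2}$ by $\Delta_h$ it cancels on each $\Sigma_s$ (here the paper first secures $\nabla P_{22}=\nabla\mu_2$ on $M\backslash D(a)$ via the eigenvalue gap $\mu_3-\mu_2\ge\delta$ coming from the boundary decay of $\mu_2$). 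What remains is only $C'\mu_2/|\nabla\varphi|^2\le\tfrac{\rho}{2}(\mu_1+\mu_2)$ once $a$ is large, and integrating in $s$ against the level-set-constant weight $e^{-h}$ yields \eqref{ineqPro6.1} with no extraneous volume terms and no lower bound on $\mu_2$ needed. In short: the $16m^2\rho^2$ correction in $h$ is designed to cancel an integrated level-set quantity, not a pointwise one, and your pointwise treatment of it is where the argument breaks down.
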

    \begin{proof}
        From the decay condition of $\mu_1+\mu_2$, we can assume that $\mu_3-\mu_2\geq\delta>0$ in $M\backslash D(a)$ for some $a$. This follows from $2m\rho\mu_2\geq\mu_2(2m\rho-\mu_2)=\sumb(\mu_\beta - m\rho)^2\geq(\mu_3-m\rho)^2$. Then at any point $x\in M\backslash D(a)$: if $\mu_2=0$, $P_{22}\geq0$ implies $\nabla P_{22}=0$; if $\mu_2\not=0$, locally we can assume $P_{22}=\mu_2$. Hence in $M\backslash D(a)$, $\nabla P_{22} = \nabla \mu_2$. 

        Now considering \eqref{Lmu2} with \eqref{eqW2b} and taking $\epsilon = 1$, we get
        \begin{equation}
        \begin{aligned}\label{Pp61-1}
            L_{m+2}(\mu_1+\mu_2) \leq& -2(m+1)\rho(\mu_1+\mu_2)+\frac{2(m+1)}{m}\mu_2^2\\
            &+\frac{12m^2\rho^2}{|\nabla\varphi|^2}\mu_2+2m\rho\mu_2+|W^{\Sigma_s}|^2\\
            &+\frac{3m\rho}{|\nabla\varphi|^2}\nabla\varphi\nabla\mu_2+\frac{|\nabla\varphi||\nabla P|}{|\nabla\varphi|^2}\mu_2\\
            \leq& -\rho(\mu_1+\mu_2) + |W^{\Sigma_s}|^2 + \frac{3m\rho}{|\nabla\varphi|^2}\nabla\varphi\nabla(\mu_1+\mu_2), 
        \end{aligned}
        \end{equation}
        in the sense of distribution, in $M\backslash D(a)$. 

        We need a more precise estimate on $\int_{\Sigma_s} |W^{\Sigma_s}|^2 d\sigma$. We can use \eqref{inintW} and \eqref{inNP2_2} to derive:
        \begin{equation*}
            \begin{aligned}
                \int_{\Sigma_s} |W^{\Sigma_s}|^2d\sigma \leq& \int_{{\Sigma_s}} 2\frac{|\nabla P|^2}{|\nabla\varphi|^2}|d\sigma\\
                \leq& \int_{{\Sigma_s}}\frac{C\mu_2 + C|W^{\Sigma_s}|^2+8m^2\rho^2(K_{12}-\rho)}{|\nabla\varphi|^2}d\sigma, 
            \end{aligned}
        \end{equation*}
        so for sufficiently large $a$, on level set ${\Sigma_s}\subset M\backslash D(a)$, 
        \begin{equation}\label{Pp61_2}
            \begin{aligned}
                \int_{\Sigma_s} |W^{\Sigma_s}|^2d\sigma \leq& \int_{{\Sigma_s}}\Big( \frac{C\mu_2 }{|\nabla\varphi|^2} + \frac{16m^2\rho^2(K_{12}-\rho)}{|\nabla\varphi|^2}\Big)d\sigma\\
                \leq& \int_{{\Sigma_s}}\Big( \frac{C^\prime\mu_2 }{|\nabla\varphi|^2} + \frac{16m^2\rho^2\nabla\varphi\nabla\mu_2}{|\nabla\varphi|^4}\Big)d\sigma, 
            \end{aligned}
        \end{equation}
        where in the last inequality we use the equation \eqref{eqK1a} for $K_{12}$ and combine all terms with $\mu_2$. 
        
        Integrating \eqref{Pp61-1}, using $\Delta_h$ and \eqref{Pp61_2}, we have 
        \begin{equation*}
            \begin{aligned}
                \int_{\Sigma_s} \Delta_h(\mu_1+\mu_2)d\sigma \leq& \int_{{\Sigma_s}} \Big(-\rho(\mu_1+\mu_2)+\frac{C^\prime\mu_2 }{|\nabla\varphi|^2}\Big)d\sigma\\
                \leq& \int_{{\Sigma_s}} \Big(-\frac1{2}\rho(\mu_1+\mu_2)\Big)d\sigma, 
            \end{aligned}
        \end{equation*}
        holds for all level set ${\Sigma_s} \subset M\backslash D(a)$, for sufficiently large $a$. 

        Noting that $e^{-h}$ is constant on each level set, we prove the proposition~\ref{IneqR}. 
        
    \end{proof}

    We want to show that the LHS of \eqref{ineqPro6.1}. To do so, we will see that the LHS of \eqref{ineqPro6.1} is related to derivative of $\int_{\Sigma_s}(\mu_1+\mu_2)d\sigma$, and use the decay of $\mu_1+\mu_2$. 
    
    The one-parameter family of diffeomorphisms $F(s,x)$ defined as 
    \begin{equation*}
    \begin{cases}
        \frac{\partial F}{\partial s}=\frac{\nabla\varphi}{|\nabla\varphi|^2},\\
        F(x,a) = x ,\ x\in \Sigma_a, 
    \end{cases}
    \end{equation*}
    gives a local coordinate on $M$, $g_{\alpha\beta} = g(\frac{\partial F}{\partial x^\alpha},\frac{\partial F}{\partial x^\beta})$. So
    \begin{equation}
        \begin{aligned}
            \frac{\partial}{\partial s} d\sigma_{{\Sigma_s}} = &\frac{\partial}{\partial s}\sqrt{det(g_{\alpha\beta})} dx^2\wedge dx^3\wedge dx^4\wedge dx^5\\
            = & \frac1{2}2g^{\alpha\beta}g(\nabla_{\frac{\partial F}{\partial s}}\frac{\partial F}{\partial x^\alpha},\frac{\partial F}{\partial x^\beta})d\sigma_{{\Sigma_s}}\\
            = & g^{\alpha\beta}g(\nabla_{\frac{\partial F}{\partial x^\alpha}}\frac{\partial F}{\partial s},\frac{\partial F}{\partial x^\beta})d\sigma_{{\Sigma_s}}\\
            = & \frac1{|\nabla\varphi|^2}g^{\alpha\beta}\varphi_{\alpha\beta}d\sigma_{{\Sigma_s}}\\
            = & \frac1{|\nabla\varphi|^2}g^{\alpha\beta}(\lambda g_{\alpha\beta} - R_{\alpha\beta} + \frac1{m}\varphi_\alpha\varphi_\beta)d\sigma_{{\Sigma_s}}\\
            = & \frac1{|\nabla\varphi|^2}(4\lambda - (R-R_{11}) )d\sigma_{{\Sigma_s}}\\
            = & \frac{m\rho}{|\nabla\varphi|^2}d\sigma_{{\Sigma_s}}. 
        \end{aligned}
    \end{equation}
    Let $\Phi(s) = -\frac{m\rho}{2mK}\log(1-\frac{K}{B}e^{\frac{-2s}{m}})$, then $\Phi>0$ and
    \begin{equation}
        \frac{\partial}{\partial s}\Big(\Phi(s)d\sigma_{{\Sigma_s}}\Big) = \Big(-\frac{m\rho}{2mK}\frac{2Ku^2}{m|\nabla u|^2}+\frac1{|\nabla\varphi|^2}\Big)d\sigma_{{\Sigma_s}} = 0. 
    \end{equation}
    Now we will use $\int_{\Sigma_s} (\mu_1+\mu_2)\Phi(s)d\sigma$ to get the inequality we want: 
    \begin{proposition}\label{ineqL}
        Let $(M^5, g, u, \lambda)$ be a nontrivial simply-connected compact 5-dimensional $m$-quasi-Einstein manifold with boundary and constant $R=\frac{3m+5}{m+1} \lambda$, $m>1$. Then for $a$ in Proposition~\ref{IneqR}, there exists $b>a$, such that
        \begin{equation}
            \int_{M\backslash D(b)} \Delta_h(\mu_1+\mu_2)e^{-h}dV \geq0. 
        \end{equation}
    \end{proposition}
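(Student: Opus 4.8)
The plan is to compute the left-hand side by the divergence theorem and to recognize the resulting boundary term over $\Sigma_b$ as a positive multiple of the derivative, at $s=b$, of the weighted flux $I(s):=\int_{\Sigma_s}(\mu_1+\mu_2)\,\Phi(s)\,d\sigma$; the existence of a suitable $b$ then follows from the decay of $\mu_1+\mu_2$ towards $\partial M$. Write $v=\mu_1+\mu_2$; recall that on $M\backslash D(a)$ one has $v=\mu_2=P_{22}$, which---exactly as in the proof of Proposition~\ref{IneqR}---is Lipschitz with $\nabla v=\nabla P_{22}$ and is differentiable along the flow lines of $F$, and that $e^{-h}$, being a function of $u$, is constant on each $\Sigma_s$; put $\psi(s):=e^{-h}|_{\Sigma_s}>0$. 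Since $\Delta_h v\,e^{-h}=\divsymb(e^{-h}\nabla v)$, the divergence theorem on the compact region $D(s)\backslash D(b)$ (whose boundary is $\Sigma_s\cup\Sigma_b$, with outward conormals $\pm\nabla\varphi/|\nabla\varphi|$, and which is disjoint from $\partial M$) gives, for $a<b<s<\infty$,
\begin{equation*}
    \int_{D(s)\backslash D(b)}\Delta_h v\,e^{-h}\,dV
    =\psi(s)\int_{\Sigma_s}\frac{\nabla_{\nabla\varphi}v}{|\nabla\varphi|}\,d\sigma
    -\psi(b)\int_{\Sigma_b}\frac{\nabla_{\nabla\varphi}v}{|\nabla\varphi|}\,d\sigma .
\end{equation*}
As $s\to\infty$ the first term on the right vanishes: $\psi(s)\to0$ (since $h\to+\infty$ near $\partial M$), while $\bigl|\int_{\Sigma_s}\tfrac{\nabla_{\nabla\varphi}v}{|\nabla\varphi|}d\sigma\bigr|\le\int_{\Sigma_s}|\nabla v|\,d\sigma\le\bigl(\int_{\Sigma_s}|\nabla P|^2\,d\sigma\bigr)^{1/2}\Area(\Sigma_s)^{1/2}$ stays bounded, because $|\nabla v|\le|\nabla P|$, $|\nabla P|^2\le C(|W^{\Sigma_s}|^2+1)$ on $M\backslash D(a)$, $\int_{\Sigma_s}|W^{\Sigma_s}|^2d\sigma\le C|\nabla\varphi|^{-2}\to0$, and $\Area(\Sigma_s)$ increases to the finite value $\Area(\partial M)$. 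Hence
\begin{equation*}
    \int_{M\backslash D(b)}\Delta_h v\,e^{-h}\,dV=-\,\psi(b)\int_{\Sigma_b}\frac{\nabla_{\nabla\varphi}v}{|\nabla\varphi|}\,d\sigma .
\end{equation*}

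Next I would compute $I'(s)$. Since $\Phi(s)\,d\sigma_{\Sigma_s}$ is invariant under the flow $F$, pulling back to $\Sigma_a$ gives $I(s)=\Phi(a)\int_{\Sigma_a}\bigl(v\circ F(s,\cdot)\bigr)\,d\sigma_{\Sigma_a}$; differentiating under the integral sign, using $\partial_s F=\nabla\varphi/|\nabla\varphi|^2$, and pushing the integral back onto $\Sigma_s$ (again by the invariance of $\Phi\,d\sigma_{\Sigma_s}$ and the constancy of $\Phi$ and $|\nabla\varphi|$ on $\Sigma_s$),
\begin{equation*}
    I'(s)=\Phi(a)\int_{\Sigma_a}\Bigl(\frac{\nabla_{\nabla\varphi}v}{|\nabla\varphi|^2}\Bigr)\!\circ F(s,\cdot)\,d\sigma_{\Sigma_a}
    =\frac{\Phi(s)}{|\nabla\varphi|}\int_{\Sigma_s}\frac{\nabla_{\nabla\varphi}v}{|\nabla\varphi|}\,d\sigma .
\end{equation*}
Combining with the previous display, $\int_{M\backslash D(b)}\Delta_h v\,e^{-h}\,dV=-\,\psi(b)\,\dfrac{|\nabla\varphi|}{\Phi(b)}\,I'(b)$, and since $\psi(b),\Phi(b),|\nabla\varphi|>0$ it suffices to produce some $b>a$ with $I'(b)\le0$.

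Finally I would observe that $I(s)\ge0$ for every $s$ (as $v\ge0$, $\Phi>0$) and that $I(s)\to0$ as $s\to\infty$: the flow lines $s\mapsto F(s,x)$ have finite length because $|\partial_s F|=|\nabla\varphi|^{-1}$ decays exponentially in $s$, so they converge to points of $\partial M$, where $v=\mu_1+\mu_2=0$ by Section~\ref{sec5}; as $v$ is continuous and bounded on $M$, dominated convergence gives $\int_{\Sigma_a}\bigl(v\circ F(s,\cdot)\bigr)\,d\sigma_{\Sigma_a}\to0$, i.e.\ $I(s)\to0$. A nonnegative absolutely continuous function on $[a,\infty)$ tending to $0$ cannot be nondecreasing on any half-line $[s_0,\infty)$ except by being eventually $\equiv0$ there; either way there is some $b>a$ (indeed arbitrarily large) with $I'(b)\le0$, and for such $b$ the identity above yields $\int_{M\backslash D(b)}\Delta_h v\,e^{-h}\,dV\ge0$, as claimed. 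The delicate point is the vanishing of the $\Sigma_s$-flux in the limit $s\to\infty$: this is precisely where the uniform bound $|\nabla P|^2\le C(|W^{\Sigma_s}|^2+1)$ and the $L^1$-decay of $|W^{\Sigma_s}|^2$ on level sets from Sections~\ref{sec4}--\ref{sec5} enter, together with the exponential decay $e^{-h}\to0$ that the particular choice of $h$ in \eqref{h} is engineered to produce; the mild non-smoothness of $\mu_1+\mu_2$ along eigenvalue crossings is handled exactly as in Proposition~\ref{IneqR}.
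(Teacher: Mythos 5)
Your proposal is correct and follows essentially the same route as the paper: apply the divergence theorem on the annulus $D(s)\backslash D(b)$, show the $\Sigma_s$-flux vanishes as $s\to\infty$, identify the remaining $\Sigma_b$-flux with a positive multiple of $I'(b)$ where $I(s)=\int_{\Sigma_s}(\mu_1+\mu_2)\Phi(s)\,d\sigma$, and then use $I\ge 0$, $I\to 0$ to produce a $b$ with $I'(b)\le 0$. Two small points where you improve on the paper's terse write-up: you justify the vanishing flux at $\Sigma_s$ via Cauchy--Schwarz on $|\nabla v|\le|\nabla P|$ together with the $L^1$-decay of $|W^{\Sigma_s}|^2$ (the paper's pointwise comparison $\langle\nabla v,\nabla h/|\nabla h|\rangle\le C|\nabla P|^2$ is not a correct pointwise bound, though the conclusion is the same), and you supply the argument for $I(s)\to 0$ by pulling $I$ back to $\Sigma_a$ along the flow and invoking dominated convergence together with $\mu_1+\mu_2=0$ on $\partial M$, which the paper merely asserts.
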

    \begin{proof}
        Notice that 
        \begin{equation*}
            \begin{aligned}
                \int_{\Sigma_s}<\nabla(\mu_1+\mu_2),\frac{\nabla h}{|\nabla h|}>e^{-h} d\sigma\leq&\int_{\Sigma_s}C|\nabla P|^2e^{-h} d\sigma
            \end{aligned}
        \end{equation*}
        turns to 0 as $s\rightarrow+\infty$. So 
        \begin{equation}\label{ineqL_1}
            \int_{M\backslash D(s)} \Delta_h(\mu_1+\mu_2)e^{-h}dV  = -\int_{\Sigma_s}<\nabla(\mu_1+\mu_2),\frac{\nabla h}{|\nabla h|}>e^{-h} d\sigma
        \end{equation}
    
        Define $I(s) = \int_{\Sigma_s} (\mu_1+\mu_2)\Phi(s)d\sigma$, then
        \begin{equation*}
        \begin{aligned}
            I^\prime(s) =& \int_{\Sigma_s} <\nabla(\mu_1+\mu_2),\frac{\partial F}{\partial s}>\Phi(s)d\sigma\\
            =& \frac{\Phi(s)}{|\nabla\varphi|}\int_{\Sigma_s} <\nabla(\mu_1+\mu_2),\frac{\nabla h}{|\nabla h|}>d\sigma, 
        \end{aligned}
        \end{equation*}
        where $s$ is sufficiently large to ensure $\nabla h$ and $\nabla\varphi$ share the same direction. Note that $\Phi>0$ implies $I(s)\geq0$, and $I(s)\rightarrow0$ as $s\rightarrow+\infty$, so there must be $b\geq a$ such that $I^\prime(b)\leq0$. So RHS of the equation \eqref{ineqL_1} is non-negative when $s=b$. 
    \end{proof}

    Proposition~\ref{IneqR} and Proposition~\ref{ineqL} mean that $\mu_1=\mu_2=0$ outside a finite level set. The rest argument is quite similar to the proof of Theorem 4 in \cite{costa2024rigiditycompactquasieinsteinmanifolds}. 

    \begin{proof}[Proof of Theorem\ref{THM}]
        From Proposition~\ref{IneqR} and~\ref{ineqL} we know that $\mu_1 = \mu_2 = 0,\ \mu_3 = \mu_4 = \mu_5 = m\rho$ in $M\backslash M(b)$. Then from inequality \eqref{Pp61_2} and the equation \eqref{eqK1a}, $W^{\Sigma_s} = 0$ and $K_{12} = \rho$ in $M\backslash M(b)$. Therefore it derives from the estimate \eqref{np2} on $|\nabla P|^2=|\nabla \Ric|^2$ that $\nabla\Ric = 0$ in $M\backslash M(b)$. 

        He, Petersen and Weylie \cite{HPW-2012} have proven that $g$ and $u$ are real analytic under harmonic coordinates on $\text{int}M$, which means $\nabla \Ric =0$ holds in $\text{int}M$. Recall that the second Bianchi identity says $\nabla_l R_{ijkl} = \nabla_j R_{ik} - \nabla_i R_{jk}$. So Riemannian curvature is harmonic, and hence by Corollary 1.14 in \cite{HPW-2014}, $(M,g,u)$ is rigid. 

        By Proposition~\ref{Rigid} that $M$ is simply-connected, $(M,g,u)$ can split as
        $$
        \begin{aligned}
        M & =\left(M_1, g_1\right) \times\left(M_2, g_2\right) \\
        u(x_1,x_2) & =u(x_2) = u_2(x_2),
        \end{aligned}
        $$
        where $(M_1,g_1)$ is $\lambda$-Einstein manifold, $M_2$ is $\mathbb{S}^{n_2}_+$, $u = u_2$. Critical set of $u_2$ is only one point, so dim$M_1$ = dim $N$ = $k$ = 3, which means $M_1 = \mathbb{S}^3$. 
        
    \end{proof}

\section*{Acknowledgements}
    The author would like to thank Professor Haizhong Li for guidance and encouragement, and Jingche Chen for valuable comments and for checking the computations.
    
\bibliography{references}
\end{document}